\UseAllTwocells \xyoption{frame} \CompileMatrices
\newtheorem{prop}{Proposition}[section]
\newtheorem{lem}[prop]{Lemma}
\newtheorem{cor}[prop]{Corollary}
\newtheorem{thm}[prop]{Theorem}
\newtheorem{rmk}[prop]{Remark}
\newtheorem{conjecture}[prop]{Conjecture}
\newtheorem{defn}[prop]{Definition}
\newtheorem{numex}[prop]{Example}
\newenvironment{pf}{\begin{trivlist}\item[]{\sc Proof.}}%
            {\nolinebreak $\Box$ \end{trivlist}}
\newcommand{\noprint}[1]{}
\renewcommand{\tilde}{\widetilde}
\newcommand{\ber}{{\mbox{\tiny Ber}}}
\newcommand{\et}{{\mbox{\tiny \'{e}t}}}
\newcommand{\com}{^{\scriptscriptstyle\bullet}}
\newcommand{\XX}{{\mathfrak X}}
\newcommand{\YY}{{\mathfrak Y}}
\newcommand{\UU}{{\mathfrak U}}
\newcommand{\ZZ}{{\mathfrak Z}}
\newcommand{\FF}{{\mathfrak F}}
\newcommand{\MM}{{\mathfrak M}}
\newcommand{\zz}{{\mathbb Z}}
\newcommand{\kk}{{\mathbb K}}
\newcommand{\aaa}{{\mathbb A}}
\newcommand{\nn}{{\mathbb N}}
\renewcommand{\ll}{{\mathbb L}}
\newcommand{\qq}{{\mathbb Q}}
\newcommand{\pp}{{\mathbb P}}
\newcommand{\cc}{{\mathbb C}}
\newcommand{\sS}{{\mathcal S}}
\newcommand{\oO}{{\mathcal O}}
\newcommand{\mM}{{\mathcal M}}
\newcommand{\yY}{{\mathcal Y}}
\newcommand{\aA}{{\mathcal A}}
\newcommand{\spf}{\mbox{spf}}
\DeclareMathOperator{\Hom}{Hom}
\newcommand{\ext}{{\mbox{\tiny Ext}}}
\DeclareMathOperator{\Ext}{Ext}
\DeclareMathOperator{\Var}{Var}
\DeclareMathOperator{\Crit}{Crit}
\DeclareMathOperator{\Fsch}{Fsch}
\DeclareMathOperator{\stft}{stft}
\DeclareMathOperator{\Coh}{Coh}
\DeclareMathOperator{\cone}{cone}
\newcommand{\Proj}{\mathop{\rm\bf Proj}\nolimits}
\newcommand{\spec}{\mathop{\rm Spec}\nolimits}
\numberwithin{equation}{section}
\newcommand{\sss}{\vspace{5pt} \subsubsection*{ }\refstepcounter{equation}{{\bfseries(\theequation)}\ }}
\def\Label{\label}
\title[Thom-Sebastiani theorem]{The Thom-Sebastiani theorem for the Euler characteristic of cyclic $L_\infty$-algebras}
\author{Yunfeng Jiang}
\address{Department of Mathematics\\ University of Kansas\\ 405 Snow Hall 1460 Jayhawk Blvd\\Lawrence KS 66045 USA} 
\email{y.jiang@ku.edu}
\begin{document}
\sloppy \maketitle
\begin{abstract}
Let $L$ be a cyclic $L_\infty$-algebra of dimension $3$ with finite dimensional cohomology only in dimension one and two.  
By transfer theorem there exists a cyclic $L_\infty$-algebra structure on the cohomology $H^*(L)$.
The inner product plus the higher products of the cyclic $L_\infty$-algebra defines a superpotential function 
$f$ on $H^1(L)$.  We associate with an analytic Milnor fiber for the formal function $f$ and define the Euler characteristic of $L$ is to be the Euler characteristic of the \'etale cohomology of the analytic Milnor fiber.

In this paper we prove a Thom-Sebastiani type formula for the Euler characteristic of cyclic $L_\infty$-algebras.  As applications we prove the Joyce-Song formulas about the Behrend function identities for semi-Schur objects in the derived category of coherent sheaves over Calabi-Yau threefolds.  A  motivic Thom-Sebastiani type formula and a conjectural motivic Joyce-Song formulas  for the motivic Milnor fiber of cyclic $L_\infty$-algebras are also discussed.
\end{abstract}

\maketitle

\section{Introduction}

\sss Let $L$ be a cyclic $L_\infty$-algebra of dimension three with finite dimensional cohomology.  The transfer theorem of Kontsevich-Soibelman and Behrend-Getzler  guarantees that on the cohomology $H(L)$ of $L$ there exists a 
cyclic $L_\infty$-algebra structure such that the $L_\infty$-algebra $L$ is quasi-isomorphic to $H(L)$.  In this paper we assume that $H^{i}(L)=0$ only except for $i=1,2$. (This infinity algebra $L$ will control the formal deformation of  the Schur objects in the bounded derived category of coherent sheaves on Calabi-Yau threefolds, hence is related to the moduli problems.)

\sss The cyclic property of the $L_\infty$-algebra structure on $H(L)$ yields a superpotential function $f$ on the first cohomology $X:=H^1(L)$.  The superpotential function $f$ is a formal power series in the coordinates $T_1,\cdots,T_m, S_1,\cdots,S_n$ of $X$.  
Let $R:=\cc[\![t]\!]$ be the complete discrete valuation ring, and $\kk$ its fractional  field. 
Then the  field $\kk$ is $\cc(\!(t)\!)$, which  is a nonarchimedean field with the standard nonarchimedean absolute value 
 $|\cdot|:=e^{-\nu(\cdot)}$, where $\nu(\cdot)$ is the evaluation of $\kk$. The function 
$f$ is naturally an element in the special $R$-algebra $R\{T\}[\![S]\!]:=R\{T_1,\cdots,T_m\}[\![S_1,\cdots,S_n]\!]$.  From \cite{Ber2}, there is a special formal scheme $\spf(R\{T\}[\![S]\!])$ over \spf(R). The generic fiber $\spf(R\{T\}[\![S]\!])_{\eta}$ is the product of a closed unit disc $E^m(0,1)$ and an open unit disc $D(0,1)$ in $\aaa^{m+n}_{\kk,\ber}$. 
The superpotential function $f$ yields a special formal $R$-scheme
$$\hat{f}: \XX:=\spf(A)\to\spf(R)$$
also in the sense of \cite{Ber2}, \cite{Ni}, since $A:=R\{T\}[\![S]\!]/(f-t)$ is a special $R$-algebra. The generic fiber $\XX_\eta$ is a  subanalytic space inside $E^m(0,1)\times D(0,1)$. 
We define an analytic Milnor fiber 
$\FF_{P}(f)$ associated with $f$ using the method in \cite{Ni}, which is an analytic space inside $\XX_\eta$.  We define the Euler characteristic  $\chi(L):=\chi(\FF_{P}(f))$ of $L$ to be the Euler characteristic of the {\'e}tale cohomology of 
$\FF_{P}(f)$ in sense of \cite{Ber1}.

\sss If $f$ is just a regular function in $\cc[T,S]$, then the $R$-algebra $R\{T,S\}$ of strictly convergent power series on $\{T,S\}$ is topologically of finite type over $R$, and the formal scheme $\XX$ is a $\stft$(separated~ and~ topologically~ of ~finite ~type) formal scheme over $R$, and is the $t$-adic completion of the morphism $f: \spec(\cc[T,S])\to \spec(\cc[t])$. The analytic Milnor fiber $\FF_{P}(f)$ associated with $f$ is defined in \cite{NS}, which is the preimage 
$sp^{-1}(P)$ under the specialization  map $sp: \XX_\eta\to\XX_s$. Our definition matches this special case.
Associated to this $f$ there is a topological Milnor fiber $F_P(f)$. By comparison theorem in \cite{Ber4}, \cite{Ber2}, the  Euler characteristic of the topological Milnor fiber $F_P(f)$ is the same as the Euler characteristic of the {\'e}tale cohomology of 
$\FF_{P}(f)$.

\sss Let $X_1$ and $X_2$ be two complex vector spaces and $f_1$ and $f_2$ two holomorphic functions on $X_1$ and $X_2$ respectively.   Then $f=f_1+f_2$ is a holomorphic function on the direct sum space $X=X_1\oplus X_2$.  Let $P=(P_1,P_2)$ be a point in $X$, where $P_i\in X_i$ for $i=1,2$.  The Thom-Sebastiani type formula  for the Milnor number is stated as 
$(1-\chi(\mathbb{F}_{P}))=(1-\chi(\mathbb{F}_{P_{1}}))\cdot (1-\chi(\mathbb{F}_{P_{2}}))$, where 
$\mathbb{F}_{P}$, $\mathbb{F}_{P_{i}}$ are the Milnor fibers of the holomorphic functions $f$ and $f_i$ at $P, P_i$ for 
$i=1,2$.
We prove a Thom-Sebastiani type formula for the Euler characteristic of cyclic $L_\infty$-algebras.  For two  cyclic $L_\infty$-algebras $L_1$ and $L_2$, we construct a cyclic $L_\infty$-algebra 
structure on the direct sum $L:=L_1\oplus L_2$.  The  product formula  $(1-\chi(L))=(1-\chi(L_1))\cdot(1-\cdot\chi(L_2))$
is proved.

\sss Let $Y$ be a smooth projective Calabi-Yau 3-fold, and $D^b(\Coh(Y))$ the bounded derived category of coherent sheaves on $Y$.  The moduli stack of derived objects has not been constructed yet.  In order to apply our method of cyclic $L_\infty$-algebra to the derived objects, we fix a Bridgeland stability condition on $D^b(\Coh(Y))$ and the heart $\mathcal{A}$ of the corresponding bounded $t$-structure is an abelian category.  Any object $E\in \mathcal{A}$ satisfies the condition that $\Ext^{<0}(E,E)=0$. We call such objects semi-Schur. The moduli stack $\MM$ of objects in $\mathcal{A}$ can be constructed, which is an Artin stack locally of finite type.

For an arbitrary object $E$ in the heart $\mathcal{A}$,  we assume that $E$ is Schur or stable under some stability condition, i.e. $\Ext^{i}(E,E)=0$ only except for $i=1,2$.
There is a cyclic dg Lie algebra $R\Hom(E,E)$ corresponding to $E$.  On the cohomology $L_E:=\Ext^*(E,E)$ there is a cyclic $L_\infty$-algebra structure coming from the transfer theorem. 
We define the Euler characteristic $\chi(E)$ of $E$ by the Euler characteristic of the cyclic $L_\infty$-algebra $\Ext^*(E,E)$ or the dg Lie algebra $R\Hom(E,E)$.  Donaldson-Thomas invariants count  stable objects in the derived category and  this Euler characteristic is equal to the pointed Donaldson-Thomas invariant given by the point $E$ in the moduli space.

\sss If $E$ is only semi-Schur, we define $\chi(E)$ to be the Euler characteristic of the \'etale cohomology of the analytic Mionor fiber 
$\FF_{f}(0)$ of the potential function on $\Ext^1(E,E)\to \cc$, where $f$ is just from the cyclic $L_\infty$-algebra structure on 
$\Ext^*(E,E)$.

The work of Behrend and Getzler \cite{BG} proves that  the superpotential function $f: \Ext^1(E,E)\to \cc$ on $\Ext^1(E,E)$  is holomorphic in the complex analytic topology.  The same result for coherent sheaves has been proved in \cite{JS}. 
Let $\MM$ be the moduli stack of objects in $\aA$, which is an Artin stack locally of finite type. 
In \cite{Joyce}, Joyce etc use $(-1)$-shifted symplectic structure of \cite{PTVV} on the moduli  scheme $\MM$ of stable sheaves over smooth Calabi-Yau threefolds to show that the moduli scheme  locally is given by the critical locus of a regular function $g$.  
The Euler characteristic of the topological Milnor fiber associated with the regular function $g$ gives the pointed Donaldson-Thomas invariant. This regular function may not coincide with the superpotential function $f$ coming from the $L_\infty$-algebra at $E$, but they give the same formal germ moduli scheme $\widehat{\MM}_{E}$ at the point $E$. 

\sss We work over nonarchimedean field and the superpotential function $f$ is convergent in an open unit disc of an affine Berkovich analytic space, and the pointed Donaldson-Thomas invariant given by the point $E$ is the Euler characteristic of the analytic Milnor fiber of $f$. 
If $f$ is a regular function, then by the comparison theorem of \cite{Ber4}, the Euler characteristic of the analytic Milnor fiber of $f$ is the same as  the topological  Euler characteristic of the Milnor fiber of $f$. 
If the potential function  $f$ is a holomorphic function in the complex analytic space, then 
 $f$ gives a special formal scheme 
$\hat{f}: \spf(A)\to \spf(R)$. 
A recent preprint \cite{Ber-future} of Berkovich shows that  the nearby and  vanishing cycle functors of $\hat{f}$ defined by him
are isomorphic to the nearby and vanishing functors $\psi_{f}$ and $\varphi_{f}$ in complex analytic topology.  
Thus the comparison theorem in \cite{Ber4} is generalized to the complex analytic topology setting. 
Hence our method gives the same Euler characteristic as in \cite{JS} in the complex analytic topology setting. 

\sss As an application of the Thom-Sebastiani formula, we prove the Joyce-Song formula in \cite{JS} for  semi-Schur objects in the derived category.  The proof is similar to the method of Joyce-Song, except that we use the Euler characteristic of cyclic $L_\infty$-algebras defined in this paper.  This answers Question 5.10 (c) of Joyce in the paper \cite{JS}.  Note that in \cite{Bussi}
V. Bussi uses the $(-1)$-shifted symplectic structure on the moduli stack $\MM$ of coherent sheaves to prove such Behrend function identities, where her proof relies on the local structure of the moduli stack in \cite{Joyce}. 
Our proof is different from theirs and uses Berkovich spaces, and the author hopes that the proof can be generalized to the motivic level of the Behrend function identities, see \cite{Jiang2}. 
The formula is also applied to stable pairs  $\mathcal{O}_{Y}\to F$ constructed in \cite{PT} and proves that the Behrend  function on the stable pair moduli space is, up to a sign,  the Behrend function on the corresponding moduli space of semi-stable sheaves $F$. This result was formerly proved by Bridgeland using the unpublished notes of Pandharipande and Thomas, see Theorem 3.1 of  \cite{Bridgeland}.

\sss The motivic Milnor fiber of the cyclic $L_\infty$-algebras was defined in \cite{Jiang}.  We prove the Thom-Sebastiani formula for the motivic Milnor fibers of cyclic $L_\infty$-algebras.  We also conjecture the motivic version of the Joyce-Song formulas, which is related to a conjecture of Kontsevich-Soibelman about the motivic Milnor fibers. The Kontsevich-Soibelman conjecture was recently proved by Le \cite{Thuong2} using the method of motivic integration. 

\sss The outline of the paper is as follows.  The materials about cyclic $L_\infty$-algebras and transfer theorem is reviewed  in Section \ref{material}.  More details of the theory can be found in \cite{KS}, \cite{BG}.
The germ moduli space of cyclic dg Lie algebras is also discussed. 
In Section \ref{Euler-cyclic} we define the Euler characteristic of the cyclic $L_\infty$-algebra $L$.  We prove a Thom-Sebastiani type formula for Euler characteristic of cyclic $L_\infty$-algebras in 
Section \ref{Thom}.  As applications we prove the Joyce-Song formulas of the Behrend function identities in Section \ref{Joyce}.
Finally in Section \ref{motivic} we talk about the motivic Milnor fiber of cyclic $L_\infty$-algebras and the corresponding Thom-Sebastiani type formula. We also conjecture a motivic version of the Joyce-Song formula for the motivic Milnor fibers. 

\subsection*{Convention}

Although the result in the paper is true for any algebraic closed field $\kappa$ so that the nonarchimedean field is 
$\kappa(\!(t)\!)$ and its ring of integers is $R=\kappa[\![t]\!]$, we work over the complex number $\cc$ throughout the paper. 

For a Berkovich analytic space $\FF$,  we use  $\chi(\FF)$ to represent the Euler characteristics the \'etale cohomology of $\FF$. 
\subsection*{Acknowledgments}

The author would like to thank Kai Behrend, Sam Payne and Andrew Strangeway  for valuable discussions on Joyce-Song formula of the Behrend function identities and Berkovich spaces. 
The author is grateful to Y. Toda on the comment of Proposition 5.5 about the Behrend function and the Euler characteristic of the analytic Milnor fiber. 
The project started when the author was visiting KIAS in January 2010, and we thank Kim Bumsig for hospitality.  The author thanks Professor Vladimir Berkovich for email correspondences on vanishing cycle functors, Professor Dominic Joyce for discussion of the Joyce-Song formula for the Behrend function identities, and  Professors Tom Coates, Richard Thomas and Alessio Corti for support at Imperial College London.
This work is partially supported by  Simons Foundation Collaboration Grant 311837.

\section{Cyclic dg Lie Algebra and $L_\infty$-algebra.}\Label{material}
\subsection*{Cyclic differential graded Lie algebras}
\sss
\begin{defn}\label{dgla}
A \textbf{Differential Graded Lie Algebra} (dg Lie algebra) is a triple $(L,[,],d)$,
where $L=\oplus_{i}L_{i}$ is a $\mathbb{Z}$-graded vector space over $\cc$, the bracket
$[,]: L\times L\longrightarrow L$ is  bilinear and 
$d: L\longrightarrow L$ is a linear map such that 
\begin{enumerate}
\item The bracket $[,]$ is homogeneous skewsymmetric which means that 
$[L^{i},L^{j}]\subset L^{i+j}$ and $[a,b]+(-1)^{\overline{a}\overline{b}}[b,a]=0$
for every $a, b$ homogeneous, where $\overline{a}, \overline{b}$ represent the degrees of $a, b$ respectively.
\item Every homogeneous elements $a, b, c$ satisfy the Jacobi identity:
$$[a,[b,c]]=[[a,b],c]+(-1)^{\overline{a}\overline{b}}[b,[a,c]].$$
\item The map $d$ has degree 1,  $d^{2}=0$ and $d[a,b]=[da,b]+(-1)^{\overline{a}}[a,db]$,
$d$ is called the differential of $L$.
\end{enumerate}
\end{defn}

\sss
\begin{defn}\label{cyclicdgla}
A \textbf{Cyclic Differential Graded Lie Algebra of dimension $3$} is a dg Lie algebra
$L$ together with a bilinear form
$$\ae: L\times L\longrightarrow \cc[-3]$$
such that for any homogeneous elements $a, b, c$,
\begin{enumerate}
\item $\ae$ is graded symmetric, i.e. 
$\ae(a,b)=(-1)^{\overline{a}\overline{b}}\ae(b,a)$;
\item $\ae(da,b)+(-1)^{\overline{a}}\ae(a,db)=0$;
\item $\ae([a,b],c)=\ae(a,[b,c])$;
\item $\ae$ induces a perfect pairing
$$H^{d}(L\otimes L)\longrightarrow \mathbb{C}$$
which induces a perfect pairing 
$$H^{i}(L)\otimes H^{d-i}(L)\longrightarrow \mathbb{C}$$
for any $i$.
\end{enumerate}
\end{defn}
\subsection*{Cyclic $L_\infty$-algebras}

\sss
\begin{defn}\label{l-infty}
An $L_{\infty}$-algebra $L$ is a $\mathbb{Z}$-graded vector space
$\oplus_{i}L^{i}$ equipped with linear maps:
$$\mu_{k}: \Lambda^{k}L\longrightarrow L[2-k],$$
given by 
$a_{1}\otimes\cdots\otimes a_{k}\longmapsto \mu_{k}(a_{1},\cdots,a_{k})$ for $k\geq 1$,
which satisfies the $higher~order~Jacobi~identities$ for any $a_{1},\cdots,a_{n}$:
\begin{align}\label{jacobi-linfty}
\sum_{l=1}^{n}\sum_{\sigma\in Sh(l,n-l)}(-1)^{\widetilde{\sigma}+(n-l+1)(l-1)}
& \epsilon(\sigma;a_{1},\cdots,a_{n})\cdot \\
\nonumber &\mu_{n-l+1}(\mu_{l}(a_{\sigma(1)},\cdots,a_{\sigma(l)}),a_{\sigma(l+1)},\cdots,a_{\sigma(n)})=0,
\end{align}
where the shuffle $Sh(l,n-l)$ is the set of all permutations $\sigma: \{1,\cdots,n\}\rightarrow\{1,\cdots,n\}$
satisfying  $\sigma(1)<\cdots<\sigma(l)$ and  $\sigma(l+1)<\cdots<\sigma(n)$.
The symbol $\epsilon(\sigma;a_{1},\cdots,a_{n})$ (which we abbreviate $\epsilon(\sigma)$) stands for the 
$Koszul~sign$ defined by
$$a_{\sigma(1)}\wedge\cdots\wedge a_{\sigma(n)}=(-1)^{\widetilde{\sigma}}\epsilon(\sigma)a_{1}\wedge\cdots\wedge a_{n},$$
where $\widetilde{\sigma}$ is the parity of the permutation $\sigma$.
\end{defn}

\sss \textbf{Decalage}:

Let $L$ be a graded vector space. Let $\odot^{k}L=S^{k}L$, the symmetric algebra
which is defined by tensor algebra modulo the symmetric relations:
$$x_1\odot x_2=(-1)^{x_1x_2}x_2\odot x_1.$$
Let $L[1]$ be the shift to the left by $1$. By definition 
$$L[1]=\mathbb{C}[1]\otimes L,$$
where
$$
\mathbb{C}[1]=\begin{cases}
\mathbb{C}& \mbox{in degree}~ $-1$;\\
0& \mbox{else}.
\end{cases}
$$
Then we have an isomorphism:
$$\phi: \odot^{n}(L[1])\stackrel{\cong}{\longrightarrow}(\Lambda^{n}L)[n]$$
given by
$$(x_1[1]\odot\cdots\odot x_{n}[1])\mapsto (-1)^{\sum_{i=1}^{n}(n-i)x_i}x_1\wedge \cdots\wedge x_{n}[n].$$
Under this isomorphism  we have that
$$q_k:=\mu_k\phi: \odot^{k}(L[1])\longrightarrow L[1]$$
defined by 
$$q_k(x_1[1],\cdots,x_{n}[1])=(-1)^{k+\sum_{i=1}^{k}(k-i)x_i}\mu_k(x_1,\cdots,x_{n})[1]$$
has degree $+1$.
Then the $L_{\infty}$-relation (\ref{jacobi-linfty}) in terms of $(q_{k})$ is:
$$\sum_{l=1}^{n}\sum_{\sigma\in Sh(l,n-l)}
\epsilon(\sigma;a_{1},\cdots,a_{n})q_{n-l+1}(q_{l}(a_{\sigma(1)},\cdots,a_{\sigma(l)}),a_{\sigma(l+1)},\cdots,a_{\sigma(n)})=0.$$

\sss \textbf{The coalgebra:}

Let $$\bigodot L=\bigoplus_{n\geq 1}\odot^{n}L.$$
We define a coalgebra structure on $\bigodot L$
$$\Delta: \bigodot L\longrightarrow \bigodot L\otimes \bigodot L$$
by
$$x_1\odot\cdots\odot x_n\mapsto 
\sum_{l=1}^{n-1}\sum_{\sigma\in Sh(l,n-l)}\epsilon(\sigma,x)x_{\sigma(1)}\odot\cdots\odot x_{\sigma(l)}\otimes
x_{\sigma(l+1)}\odot\cdots\odot x_{\sigma(n)}.$$
The sequence $(q_k)_{k\geq 1}$  define a coderivation of the coalgebra 
$\bigodot (L[1])$:
$$Q: \bigodot(L[1])\longrightarrow \bigodot(L[1])$$
of degree $1$ given by
$$Q(x_1\odot\cdots\odot x_n)=\sum_{l=1}^{n}
\sum_{\sigma\in Sh(l,n-l)}\epsilon(\sigma,x)q_{l}(x_{\sigma(1)},\cdots,x_{\sigma(l)})\odot 
x_{\sigma(l+1)}\odot\cdots\odot x_{\sigma(n)}.$$
The  $L_{\infty}$-relation (\ref{jacobi-linfty}) is equivalent to 
$Q^2=0$.

\begin{defn}\label{l-infty2}(\textbf{Another definition of $L_{\infty}$-algebras})
An $L_{\infty}$-algebra structure on a $\mathbb{Z}$-graded vector space $L$ is
equivalent to give a codifferential $Q$ on the coalgebra $(S(L[1]),\Delta)$ :
$$Q: S(L[1])\longrightarrow S(L[1])[1]$$
satisfying the conditions
\begin{enumerate}
\item $\Delta\circ Q=(Q\otimes id+id\otimes Q)\circ \Delta$;
\item $Q^{2}=0$.
\end{enumerate}
\end{defn}

\begin{rmk}
We check the first three Jacobi identities from (\ref{jacobi-linfty}). Let $da=\mu_{1}(a)$
and $[a_{1},a_{2}]=\mu_{2}(a_{1},a_{2})$.
\begin{enumerate}
\item (n=1): $d^{2}=0$;
\item (n=2): $d[a_{1},a_{2}]=[da_{1},a_{2}]+(-1)^{\overline{a}_{1}}[a_{1},da_{2}]$;
\item (n=3): $[[a_{1},a_{2}],a_{3}]+(-1)^{(\overline{a}_{1}+\overline{a}_{2})\overline{a}_{3}}[[a_{3},a_{1}],a_{2}]
+(-1)^{\overline{a}_{1}(\overline{a}_{2}+\overline{a}_{3})}[[a_{2},a_{3}],a_{1}]=
-d\mu_{3}(a_{1},a_{2},a_{3})-\mu_{3}(da_{1},a_{2},a_{3})-(-1)^{\overline{a}_{1}}\mu_{3}(a_{1},da_{2},a_{3})
-(-1)^{\overline{a}_{1}+\overline{a}_{2}}\mu_{3}(a_{1},a_{2},da_{3})$.
\end{enumerate}
If $\mu_{k}=0$ for $k\geq 3$, then  the $L_{\infty}$-algebra $L$ is
a dg Lie algebra. One can take the $L_{\infty}$-algebra as generalizations of 
differential graded Lie algebras.
\end{rmk}

\sss 

\begin{defn}\label{linfty-morphism}
An $L_{\infty}$ morphism between two $L_{\infty}$-algebras $L$ and $L^{'}$
is defined by a degree zero coalgebra morphism $F$ from $S(L[1])$ to
$S(L^{'}[1])$ which commutes with the codifferentials $Q$ and $Q^{'}$.
It is completely determined by a set of linear maps $F_{n}: S^{n}(L[1])\rightarrow L^{'}[1-n]$
(or equivalently $F_{n}: \bigwedge^{n}L\rightarrow L^{'}[1-n]$) satisfying a 
set of equations.
\end{defn}

An $L_{\infty}$-morphism $F: L\longrightarrow L^{'}$ is called a $quasi-isomorphism$ if the first
component $F_{1}: L\longrightarrow L^{'}$ induces an isomorphism between cohomology 
groups of complexes $(L,\mu_{1})$ and $(L^{'},\mu_{1}^{'})$. 

Let $L$ be a dg Lie algebra, taking cohomology with respect to the differential $d$ we have 
the cohomological DGLA $H^{\bullet}(L)=\oplus_{i}H^{i}(L)$. We give an $L_{\infty}$-algebra structure 
on $H^{\bullet}(L)$ so that it is quasiisomorphic to $L$ as $L_{\infty}$-algebras.

\sss
\begin{defn}\label{cyclic-l-infty}
A  \textbf{Cyclic $L_{\infty}$-algebra} of dimension $3$ is a triple  $(L,\mu,\ae)$, where 
$(L,\mu)$ is an $L_{\infty}$-algebra with linear maps
$(\mu_{k})$, and 
$$\ae: L\otimes L\longrightarrow \mathbb{C}[-3]$$
is a perfect pairing which means that 
$$H^{i}(L)\otimes H^{d-i}(L)\longrightarrow \mathbb{C}$$
is a perfect pairing of finite dimensional vector spaces  for each $i$. The 
bilinear form $\ae$ satisfies the following conditions:
\begin{enumerate}
\item $\ae$ is graded symmetric, i.e. $\ae(a,b)=(-1)^{ab}\ae(b,a)$;
\item For any $n\geq 1$, 
$$\ae(\mu_{n}(x_1,\cdots,x_n),x_{n+1})=(-1)^{n+x_1(x_2+\cdots+x_{n+1})}\ae(\mu_{n}(x_2,\cdots,x_{n+1}),x_1).$$
\end{enumerate}
\end{defn}

\begin{rmk}
Any cyclic dg Lie algebra in Definition \ref{cyclicdgla} is a cyclic $L_{\infty}$-algebra.
So cyclic $L_{\infty}$-algebras are generalizations of cyclic differential graded Lie
algebras.
\end{rmk}

\sss \textbf{Transfer theorem:}
Let $(L,\mu,\ae)$ be a cyclic $L_{\infty}$-algebra, we write $d=\mu_1$. Let
$$\eta: L\longrightarrow L[-1]$$
be map of degree $-1$, such that 
\begin{enumerate}
\item $\eta^{2}=0$;
\item $\eta d\eta=\eta$;
\item $\ae(\eta x,y)+(-1)^{x}\ae(x,\eta y)=0$.
\end{enumerate}
Let $$\Pi=1-[d,\eta],$$
where $[d,\eta]=d\eta+\eta d$. Then $\Pi^{2}=\Pi$. Let
$$H=\Pi(L).$$
Let 
$\iota: H\rightarrow L$ be the inclusion and $p:L\rightarrow H$ the projection. Then
$$\iota p=\Pi, ~~ p\iota=id_{M}.$$

\begin{thm}\label{linfty-coh} (\cite{KS}, \cite{Ka}, \cite{BG})
Let $(L,\mu,\ae)$ be a cyclic $L_{\infty}$-algebra, then there exists a cyclic  $L_{\infty}$-algebra structure on 
$H$ such that there is a $L_{\infty}$-morphism 
$$\varphi: H\longrightarrow L$$
as $L_{\infty}$-algebras.
\end{thm}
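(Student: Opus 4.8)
The plan is to prove this by the homotopy transfer theorem, viewing the data $(\iota, p, \eta)$ as a contraction of the complex $(L, d)$ onto its cohomology $(H, 0)$ — indeed the relation $\Pi = 1 - [d,\eta]$ says exactly that $[d,\eta] = \mathrm{id} - \iota p$, so $\eta$ is a contracting homotopy — and transferring the $L_\infty$-structure along it via the homological perturbation lemma. The classical (non-cyclic) part of the statement is then immediate; the one genuinely new point is that the transferred structure on $H$ remains \emph{cyclic}, and this is exactly where the third defining property of $\eta$, the skew self-adjointness $\ae(\eta x, y) + (-1)^{x}\ae(x, \eta y) = 0$, is needed. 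I expect that step to be the main obstacle.

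First I would package the $L_\infty$-structure on $L$ as a square-zero coderivation $Q$ of degree $1$ on the cofree cocommutative coalgebra $\bigodot(L[1])$, as in Definition \ref{l-infty2}, and split $Q = Q_1 + \delta$, where $Q_1$ coextends $d = \mu_1$ and $\delta$ is the coderivation assembled from the higher products $\mu_{\geq 2}$. By the standard tensor trick, the contraction $(\iota, p, \eta)$ of $(L,d)$ onto $(H,0)$ induces a contraction of $(\bigodot(L[1]), Q_1)$ onto $(\bigodot(H[1]), 0)$; after arranging the usual side conditions $\eta^2 = 0$, $p\eta = 0$, $\eta\iota = 0$ (the first is given, and $\eta d \eta = \eta$ lets one normalize the other two), the homological perturbation lemma applies to the perturbation $\delta$, which raises arity and is therefore locally nilpotent on each symmetric word. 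The lemma produces a perturbed codifferential $Q_H$ on $\bigodot(H[1])$ together with a perturbed inclusion. By construction $Q_H$ is again a coderivation with $Q_H^2 = 0$, hence defines an $L_\infty$-structure $(\mu_k^H)$ on $H$; the perturbed inclusion is a coalgebra morphism commuting with the codifferentials, hence lifts to an $L_\infty$-morphism $\varphi : H \to L$ whose linear part is $\iota$, which is a quasi-isomorphism. Expanding the geometric series of the perturbation lemma yields the familiar sum-over-rooted-trees formulas: internal vertices carry products $\mu_j$, internal edges carry $\eta$, leaves carry $\iota$, and the root carries $p$ for $\mu_k^H$ and $\eta$ for $\varphi_k$, each weighted by the Koszul signs coming from the decalage isomorphism $\phi$.

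It remains to install the cyclic pairing on $H$ and verify invariance, which is the crux. The pairing is simply the restriction $\ae|_{H\otimes H}$, which is perfect because, by the perfectness hypothesis of Definition \ref{cyclic-l-infty}, $H \cong H^{\bullet}(L)$ carries the induced perfect pairing, and condition (1) transfers verbatim. The nontrivial task is condition (2): that $\ae(\mu_n^H(x_1,\dots,x_n), x_{n+1})$ is cyclically graded-symmetric. Using the tree formula, this quantity is a sum of tree amplitudes with the $x_i$ at the leaves, $x_{n+1}$ paired against the root output, and each internal edge carrying $\eta$; cyclic symmetry amounts to invariance of each amplitude, up to the prescribed sign, under rotating the root into a leaf. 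Here the three properties of $\eta$ and the cyclicity of $\ae$ on $L$ do the work: the cyclic invariance of $\ae$ with respect to each $\mu_j$ handles rotation past a vertex, the skew self-adjointness of $\eta$ handles sliding the homotopy across an edge, and $\eta^2 = 0$ together with $\eta d\eta = \eta$ kills the degenerate tree contributions (consecutive $\eta$'s, or $\eta$ on a leaf), so the tree sum is precisely the cyclically symmetric one. I expect the bulk of the labor, and essentially all the subtlety, to lie in matching the Koszul signs generated by $\phi$ against those produced by transporting $\eta$ and $p$ around the tree, and in confirming that the self-adjointness of $\eta$ compensates them exactly.
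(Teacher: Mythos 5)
Your proposal is correct and is essentially the proof the paper itself relies on: the paper gives no argument for this theorem, deferring to \cite{KS}, \cite{Ka}, \cite{BG}, and those references prove it exactly as you sketch --- homotopy transfer via the perturbation lemma (or equivalently the sum-over-rooted-trees formulas with $\mu_j$ at vertices, $\eta$ on internal edges, $\iota$ at leaves, and $p$ at the root), with the side conditions $\eta^2=0$, $\eta d\eta=\eta$ normalizing the contraction. You also correctly identify the one point beyond the classical transfer theorem, namely that cyclicity of the transferred structure is forced by the skew self-adjointness $\ae(\eta x,y)+(-1)^{x}\ae(x,\eta y)=0$, which is precisely why the paper imposes that third condition on $\eta$.
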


\sss \textbf{A special case:}

Let $(L,\mu,\ae)$ be a cyclic $L_{\infty}$-algebra.
We consider the cohomology of $L$ with respect to the differential 
$d=\mu_1$. Suppose that we have a split:
$$L^{i}=B^i\oplus H^i\oplus K^i,$$
where $B^i$ is the coboundary and $H^i$ the $i$-th cohomology. 
Then we have
$$K^i\stackrel{\cong}{\longrightarrow}B^{i+1}$$
under $d$. Let $q$ be the inverse map. Define the homotopy
$$\eta: L\longrightarrow L[-1]$$
by the following matrix
\begin{equation}\label{eta-special}
\left[\begin{array}{ccc}
0&0&0\\
0&0&0\\
q&0&0
\end{array}\right].
\end{equation}
Then the map $\eta$ satisfies all the properties in the transfer theorem
and $\Pi(L)=H(L)$. So we a
have the following corollary:

\begin{cor}
Let $H=H(L)$, the cohomology $L_{\infty}$-algebra. Then 
$d_{H}=0$ and the $L_{\infty}$-algebra $H(L)$ is 
quasi-isomorphic to $L$ under the morphism $\{\varphi_{i}\}$.
\end{cor}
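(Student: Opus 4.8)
The plan is to reduce the Corollary to the transfer theorem (Theorem \ref{linfty-coh}) by verifying that the explicit homotopy $\eta$ of (\ref{eta-special}) satisfies its three hypotheses and that $\Pi=1-[d,\eta]$ is precisely the projection onto $H(L)$; the assertions $d_H=0$ and the quasi-isomorphism property then follow from the construction of the transferred structure. First I would record the action of $d$ and $\eta$ on the three summands of $L^i=B^i\oplus H^i\oplus K^i$: by definition $d$ annihilates $B^i$ and $H^i$ and restricts to $d\colon K^i\xrightarrow{\cong}B^{i+1}$ with inverse $q$, while $\eta$ annihilates $H^i$ and $K^i$ and sends $b\in B^i$ to $q(b)\in K^{i-1}$. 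From this $\eta^2=0$ is immediate, since $\im\eta\subseteq K$ and $\eta|_K=0$, and $\eta d\eta=\eta$ follows because on $B^i$ one has $d\eta=dq=\mathrm{id}$ and then $\eta$ returns $q$. Computing $[d,\eta]=d\eta+\eta d$ block by block gives $\mathrm{id}$ on $B^i$ (via $dq=\mathrm{id}$), $0$ on $H^i$ (both terms vanish), and $\mathrm{id}$ on $K^i$ (via $\eta d=qd=\mathrm{id}$). Hence $\Pi=1-[d,\eta]$ kills $B\oplus K$ and is the identity on $H$, so $\Pi(L)=H=H(L)$.

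The delicate hypothesis is the compatibility of $\eta$ with the cyclic pairing, property (3) of the transfer theorem, and this is where I expect the real work to lie, because it fails for an arbitrary algebraic complement $K$: one must first choose the Hodge-type splitting to be adapted to $\ae$. Using condition (2) of the pairing in Definition \ref{cyclic-l-infty}, namely $\ae(da,b)+(-1)^{\bar a}\ae(a,db)=0$, one checks that $B^i$ is $\ae$-orthogonal to the cocycles $B^{3-i}\oplus H^{3-i}$; invoking the perfect pairing hypothesis then lets me choose the harmonic representatives $H^i$ and the complement $K^i$ so that, degree by degree, $B$ pairs nondegenerately with $K$, $H$ with $H$, and the remaining blocks $\ae(H,K)$ and $\ae(K,K)$ (with their symmetric partners) vanish, an isotropic, Lagrangian-type choice of complement. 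With such a self-dual splitting both $\ae(\eta x,y)$ and $\ae(x,\eta y)$ reduce to the single block $\ae(q(b_x),b_y)$ on the $B$-components, and a second application of condition (2) to $b_x=dq(b_x)$ relates the two terms. The main obstacle is then purely the sign and normalization bookkeeping: one must track the Koszul signs coming from $\deg\eta=-1$ and the degree shift in $q$ so as to land exactly on the adjointness relation of property (3). I expect this computation to close once the splitting is chosen as above.

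Granting properties (1)--(3) together with $\Pi(L)=H(L)$, Theorem \ref{linfty-coh} supplies a cyclic $L_\infty$-structure on $H=H(L)$ and an $L_\infty$-morphism $\varphi\colon H\to L$ whose first component is the inclusion $\iota$. To finish, I would identify the transferred differential $d_H=\mu_1^H$ with the composite $p\,d\,\iota$; since $\iota$ embeds $H$ as harmonic representatives and $d$ annihilates $H^i$, this composite vanishes, giving $d_H=0$. Finally $\varphi_1=\iota$ induces on cohomology the map $H^\bullet(H,0)=H\to H^\bullet(L,d)=H$, which is the identity and in particular an isomorphism, so by the definition of quasi-isomorphism in Definition \ref{linfty-morphism} the morphism $\varphi$ is a quasi-isomorphism. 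This establishes both assertions of the Corollary.
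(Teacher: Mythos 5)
Your overall route is the one the paper itself takes: the paper's proof of this Corollary is only the observation that the transferred differential on $H(L)$ vanishes, so that $\{\varphi_i\}$, whose first component is the inclusion of $H(L)$ into $L$, induces an isomorphism on cohomology. Your block computations of $\eta^2=0$, $\eta d\eta=\eta$ and $[d,\eta]$, your identification $\Pi(L)=H(L)$, and your closing argument ($d_H=p\,d\,\iota=0$ since $d$ kills the harmonic summand, and $\varphi_1=\iota$ induces the identity on cohomology) are all correct and agree with, indeed expand on, that two-line proof.

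However, the paragraph you yourself flag as ``where the real work lies'' does not close as written, and the splitting you propose would make it fail. Take $x=du\in B^i$ and $y=dv\in B^{4-i}$, with $u=q(x)$, $v=q(y)$. Condition (2) of the cyclic pairing gives $\ae(du,v)=(-1)^{i}\ae(u,dv)$, so with the paper's stated hypothesis (3) one gets $\ae(\eta x,y)+(-1)^{\bar{x}}\ae(x,\eta y)=\ae(u,dv)+(-1)^{i}\ae(du,v)=2\,\ae(u,dv)$: the two terms \emph{reinforce} rather than cancel. Hence hypothesis (3) holds on the $B\times B$ block if and only if $\ae(K,B)=0$ --- exactly the opposite of your ``Lagrangian-type'' choice in which $B$ pairs nondegenerately with $K$; with that choice, (3) is violated as soon as $B\neq 0$ in the relevant degrees. (Note also that the paper's perfectness hypothesis is only on cohomology, so you cannot in any case force the $\ae(B,K)$ block to be nondegenerate.) If instead the intended sign in (3) is the graded self-adjoint one, $\ae(\eta x,y)=(-1)^{\bar{x}}\ae(x,\eta y)$, then the same computation shows the $B\times B$ block cancels identically for \emph{every} complement $K$, and your engineering of a $B$--$K$ duality is unnecessary; only $\ae(K,H)=0$ and $\ae(K,K)=0$ need arranging. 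Either way, the step you defer with ``I expect this computation to close'' is precisely the point at which your argument is wrong or incomplete. For comparison, the paper sidesteps this entirely: it asserts before the Corollary that this $\eta$ satisfies all hypotheses of the transfer theorem, and the Corollary's own proof consists only of the final observation, which you do reproduce correctly.
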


\begin{pf}
Since on the $L_{\infty}$-algebra $H(L)$, the differential $d_m=0$, it is straightforward 
to check that the $L_{\infty}$-morphism $\{\varphi_i\}$ defined in the above theorem is 
a quasi-isomorphism.
\end{pf}

\subsection*{Moduli space associated to cyclic dg Lie algebras.}

\sss
Let $L$ be a cyclic dg Lie algebra. The germ moduli space of $L$ is given by the Artin stack 
$$\mathfrak{M}=[MC(L)/G],$$
where $MC(L) $ is the Maurer-Cartan space of $L$, and 
$G=\exp(L^0)$ is called the gauge group.

Let $M$ be the coarse moduli space of the stack $\mathfrak{M}$. Suppose that there is a symmetric perfect obstruction theory on $M$ and $M$ is proper, then from \cite{Be} the virtual count of $M$ can be given by the integration of the virtual fundamental class 
$$\int_{[M]^{virt}}1.$$
Let $\nu_{M}$ be the Behrend constructible function on the scheme $X$. Then \cite[Theorem 4.18]{Be} tells us that 
$$\chi(M,\nu_{M})=\int_{[M]^{virt}}1.$$

\sss From the transfer theorem there is an $L_\infty$-algebra structure on the cohomology $H(L)$.  
The $L_\infty$-algebra $H(L)$ and $L$ are quasi-isomorphic, which induces isomorphic germ moduli spaces. 
Using the higher product and the non-degenerate bilinear form one can write down a potential function
$$f: X:=H^{1}(L)\to \cc.$$
Hence the critical locus $Z(df)$ of $f$ is the germ moduli space associated to the dg Lie algebra $L$.
Assume that $f$ is holomorphic, then the value of the Behrend function at the origin is given by
$$\nu_{M}(0)=(-1)^{\dim(M)}(1-\chi(\mathbb{F}_{0})),$$
where $\mathbb{F}_{0}$ is the Milnor fiber of the function $f$ at $0\in X$.

\section{The analytic Milnor fiber associated with cyclic $L_\infty$-algebras.}\label{Euler-cyclic}

\subsection*{The analytic Milnor fiber.}
\sss We define the analytic Milnor fiber for a cyclic $L_\infty$-algebra of dimension 3, which is a 
Berkovich space over the nonarchimedean field $\kk=\cc(\!(t)\!)$.

\sss Let $(L,\mu,\kappa)$ be a cyclic $L_\infty$-algebra. 
We assume that the cyclic structure is of degree 3, so that
$H^i(L)=0$, for $i\not=1,2$, and all $H^i(L)$ are finite
dimensional. Let us write $X$ for the linear manifold $H^1(L)$, and
$P$ for its origin.

By transfer theorem as in \cite{KS}, \cite{BG},  on the cohomology $H^*(L)$ there exists a cyclic 
$L_\infty$-algebra structure $(H^*(L),\nu,\ae)$ such that on $H^1(L)$ there is a potential function 
$$f: H^{1}(L)\to \cc$$
defined in the same way as above.  
The potential $$f=\sum_{k} f_{k}: H^1(L)\longrightarrow \cc$$
is defined by
$$f_{k+1}(x_1,\cdots,x_k,x_{k+1})=\kappa\left(\frac{(-1)^{k(k+1)}}{k!}\mu_k(x_1,\cdots,x_k),x_{k+1}\right).$$
The potential function $f$ is a formal power series over 
$\cc[\![T,S]\!]$, where $T_1, \cdots,T_m$ and $S_1,\cdots,S_n$ are the coordinates of $X=H^1(L)$. 
We assume that $f$ is strictly convergent power series over $T_1, \cdots, T_m$, and formal series over 
$S_1,\cdots, S_n$. 

\sss Let $\kk:=\cc(\!(t)\!)$ be the nonarchimedean field with valuation $v$ such that 
$v(t)=1$. The absolute value $|\cdot|=e^{-v(\cdot)}$. Let $R:=\cc[\![t]\!]$ be the ring of integers of 
$\kk$ under the valuation.  The residue field is $\cc=R/(t)$.

\sss Let $\aA=\kk[T_1,\cdots,T_m]$ be the polynomial ring. 
Let $\aaa^m:=\mM(\aA)$ be the space of multiplicative semi-norms $\Vert\cdot\Vert$ on $\aA$. Then it is a Berkovich analytic space with topology $\Vert\cdot \Vert\to \Vert f\Vert$ is continuous for any $f\in \aA$.
The polynomial ring $\aA$ is not a Banach ring, but there is a cover of Banach analytic domains. Let 
$\aA(\kk)$ be the points $x\in\mM(\aA)$ corresponding to the semi-norms $\Vert\cdot\Vert$ satisfying
$\Vert f\Vert=\vert f(x)\vert$ for $f\in \aA$. 
 Let $\kk\{r^{-1}T\}=\kk\{r_1^{-1}T_1,\cdots,r_m^{-1}T_m\}$ be the Banach algebra of formal convergent series, i.e. $\lim_{i\to\infty}\vert a_i\vert r^{i}\to 0$ for $f=\sum_{i}a_iT^{i}$.
 A closed polydisc $E(0,r)$ can be taken as the affinoid space $\mM(\kk\{r^{-1}T\})$.
 As a set 
the closed disc of radius $r$ is given by 
$$E^m(0,r)=\{x\in \aA(\kk)| \vert x\vert\leq r\}.$$
From \cite{Ber}, the affine analytic  space 
$$\aaa^m=\cup_{r\geq 0} E(0,r)$$
 is an infinite union of closed polydiscs, where $E(0,r)=\{x\in \mM(A)| |T_i|\leq r_i\}$. 
From Berkovich's classification theorem, there are four type of Berkovich points in $\aaa^m$ and each point is the limit of a sequence of points 
$\Vert\cdot\Vert_{D_n}$ corresponding to a nested sequence $D_1\supset D_2\supset\cdots$
of balls of positive radius.

\sss Recall that a topological $R$-algebra $A$ is $special$ if $A$ is an adic ring, and for some ideal of definition $\mathbf{a}\subset A$, the quotient $A/\mathbf{a}^n, n\geq 0$ are finitely generated over $R$.
The potential function $f$ for the cyclic $L_\infty$-algebra $L$ is defined on $M=\cc^{m+n}$.  
As before, we take $f$ as an element in the algebra $R\{T\}[\![S]\!]:=R\{T_1,\cdots,T_m\}[\![S_1,\cdots,S_n]\!]$, which is  
a special $R$-algebra and can be understood as strictly convergent power series over $T_1, \cdots, T_m$, and formal series over 
$S_1,\cdots, S_n$.  The coefficients of $f$ all belong to $\cc$, and the valuations on the elements of $\cc$ are trivial. The algebra 
$R\{T\}[\![S]\!]$ is a $special$ $R$-algebra with the ideal of definition $t\cdot R\{T\}[\![S]\!]$. Let $\spf(R\{T\}[\![S]\!])$ be the special formal scheme over $R$.
The generic fiber is 
$$E^m(0,1)\times D^n(0,1),$$
where $E^m(0,1)$ is the closed unit disc and $D^n(0,1)$ is the unit open disc in the affine space 
$\aaa^m$ and $\aaa^n$ respectively.

\sss In general a formal scheme $\XX\to \spf(R)$ is $special$ if it is a locally finite union of affine formal schemes of the form $\spf(A)$, where $A$ is an adic algebra special over $R$. We fix a locally finite covering $\{\XX_i\}_{i\in I}$, where $\XX_i$ are affine subschemes of the form $\spf(A_i)$ and $A_i$ is a special $R$-algebra.  Let $\XX$ be separated. Then for any $i,j\in I$ the intersection $\XX_{ij}=\XX_i\cap\XX_j$ is also an affine subscheme of the same form.  The generic fiber $\XX_{ij,\eta}$ is a closed analytic domain in $\XX_{i,\eta}$, and the canonical morphism $\XX_{ij,\eta}\to\XX_{i,\eta}\times\XX_{j,\eta}$ is a closed immersion.  From \cite{Ber1}, we can glue $\XX_{i,\eta}$ to get a analytic space $\XX_{\eta}$.

\sss So for the formal potential function $f: \cc^{n+m}\to\cc$, we have a special $R$-algebra $A:=R\{T\}[\![S]\!]/(f-t)$, and we consider the special formal $R$-scheme:
$$\hat{f}: \XX:=\spf(A)\to\spf(R).$$
The generic fiber $\XX_\eta$ is a Berkovich analytic space, which is defined as:
$$\XX_\eta=\{(S,T)\in E^m(0,1)\times D^n(0,1)| f(S,T)=t\}.$$
Then $\XX_\eta$ is a subanalytic space inside $E^m(0,1)\times D^n(0,1)$. 
If we let 
$$\aA=A\otimes_{R}\kk=(R\{T_1,\cdots,T_m\}[\![S_1,\cdots,S_n]\!]\otimes_{R}\kk)/(f-t),$$
then from \cite{Ber2} $\XX_\eta$ is the Berkovich space $\mM(\aA)$, which is identified with the set of continuous multiplicative seminorms on $A$ that extends the valuation on $R$. 

\sss \textbf{The specialization map:} 
The special fiber $\mathfrak{X}_s$ is a $R/(t)=\cc$-scheme $\spec(A/(t))$, which is canonically  isomorphic to the fiber of $f$ over $0$.
The specialization  map  as in \cite[\S 2.2]{NS}
$$sp: \XX_{\eta}\to\XX_{s}$$
sends the points in the generic fibre $\XX_\eta$ to the central fibre $\XX_s$.

Let $\yY\subset \XX_s$ be a closed subset, which is given by an ideal $(\tilde{f}_1,\cdots,\tilde{f}_n)$ for 
$f_i\in A$. Then 
$sp^{-1}(\yY)=\{x\in \XX_\eta| |f_i(x)|<1, 1\leq i\leq n\}$ is open in $\mM(\aA)=\XX_\eta$.
This correspondence means that under the reduction map $\pi$, the preimage of a closed subset is open, and similarly the preimage of an open subset is closed.  This is one of the special properties hold for Berkovich analytic spaces. 

\begin{defn}\label{analyticmilnorfiber}(\cite{NS})
Let $\mathcal{Y}\subset \XX_s$ be a closed subscheme, the analytic Milnor fiber $\mathfrak{F}_{\mathcal{Y}}(f)$ of 
$f$ is defined as 
$$\mathfrak{F}_{\mathcal{Y}}(f)=\{x\in \XX_\eta|f_i(x)|<1, \vert x(f)\vert=0\}.$$
\end{defn}

\sss
\begin{defn}\label{euleranalyticmilnorfiber}
For such a cyclic $L_\infty$-algebra $L$, we define the Euler characteristic $\chi(L)$ of 
$L$ to be the Euler characteristic of the \'etale cohomology of the analytic Milnor fiber 
$\mathfrak{F}_0(f)$ in the sense of Berkovich \cite{Ber1}.
\end{defn}

\begin{rmk}
The reason that the superpotential function $f$ is in $A$ is that the higher products $\mu_k$ in the $L_\infty$-algebra structure may force the function $f$ to be strictly convergent over $T_1,\cdots,T_m$ and a formal power series in $S_1,\cdots,S_n$.

For instance, the superpotential function $f$ coming from the $L_\infty$-algebra of some three dimensional local Calabi-Yau threefolds are polynomials, which in most of the cases coincide with the superpotential function of the associated quiver.  Note that for the moduli stack of stable simple complexes over Calabi-Yau threefolds, in \cite{Joyce} Joyce etc prove that the local potential function of a point is given by regular functions. 
\end{rmk}

\begin{numex}
Let $A=\cc[T]$ be a polynomial ring with one variable $T$. Then there is an homemorphism
$$D(0,1)\stackrel{\cong}{\rightarrow}[0,1]$$ between the  closed unit disc 
$D(0,1)$ and the unit integral $[0,1]$ defined by 
$x\mapsto \vert T(x)\vert$.

Let $f=T^2$. Then the analytic Milnor fiber $\mathfrak{F}_0(f)=\{x\in D(0,1)| \vert T(x)\vert=\epsilon\}$
is a point.
\end{numex}

\begin{numex}
Let $\aA=\kk\{T\}$ be the strictly convergent power series ring with one variable $T$. 
Then $\mM(\aA)$ is isomorphic to the unit closed disc.
Let $f=1+\sum_{i\geq 1}a_iT^i$ be a unit power series satisfying $\vert a_i\vert<1$ and 
$\lim_{i\to\infty}\vert a_i\vert=0$. Then for any multiplicative semi-norm $x$, $x(T)=1$. Then 
if we choose $r<1$ for the open polydisc, the analytic Milnor fiber $\mathfrak{F}_0(f)$ will be empty.
\end{numex}

\subsection*{Polynomial potential function.}

\sss In this section we talk about the case when $f$ is a polynomial, i.e. regular function on the scheme $X$.
Let $f$ be a polynomial in $\cc[T]:=\cc[T_1,\cdots,T_m]$. The analytic Milnor fiber can be defined using the technique of formal schemes.  The polynomial $f$ defines a flat morphism from 
$\aaa^m=\spec(\cc[T])\to \spec(\cc[t])$. Let 
$$\hat{f}: \mathfrak{X}\to \spf(R)$$
be the $t$-adic completion of the morphism $f$, where 
$\mathfrak{X}=\spf (A)$ and $A=R\{T\}/(f-t)$. The algebra  $R\{T\}:=R\{T_1,\cdots,T_m\}$ is the algebra of convergent  power series.
The algebra $A$ is said to be $topologically ~finitely ~generated$ over $R$ and a topologically finitely ~generated algebra over $R$ is
 $special$.
The formal scheme $\mathfrak{X}\to \spf(R)$ is a $\stft$(separated and topologically of finite type)
formal scheme, see \cite{NS}.
The special fiber $\mathfrak{X}_s$ is a $R/(t)=\cc$-scheme $\spec(A/(t))$, which is canonically  isomorphic to the fiber of $f$ over $0$.
The generic fiber $\mathfrak{X}_{\eta}=\mM(A\otimes_{\kk}\kk)$ is a Berkovich analytic 
space over the field $\kk$. 

\sss The generic fiber  $\mathfrak{X}_{\eta}$ is a closed subset in the polydisc
$$E(0,1)=\{x\in\mM(\aA)| \vert T_i(x)\vert\leq 1\},$$
which is defined by the equation $\vert x(f-t)\vert=\vert f(x)\vert-\vert t(x)\vert=0$.
From \cite{NS}, the analytic Milnor fiber $\mathfrak{F}_P(f)$ is define by the preimage 
$sp^{-1}(P)$ under the reduction map. In the special fiber $\mathfrak{X}_s$, the origin 
$P$ is defined by the ideal $(T_1,\cdots,T_m)$ inside $\tilde{A}$. Then 
$$sp^{-1}(P)=\{x\in E(0,1)| \vert T_i(x)\vert< 1,\vert f(x)\vert=\vert t(x)\vert\}.$$
It is easy to see that our definition of analytic Milnor fiber in Definition \ref{analyticmilnorfiber}
matches this construction.

\sss \textbf{Vanishing cycles:}

Let $X:=H^{1}(L)$ and $m=\dim(X)$. 
Consider 
$$
\xymatrix{
&~E\rto^{}\dto_{\hat{\pi}}&~\tilde{\cc}^{*}\dto_{\pi}\\
&~X\setminus f^{-1}(0)\dto_{i}\rto^{\hat{f}}&~\cc^{*} \\
f^{-1}(0)\rto^{j}&X&,}
$$
where $j: f^{-1}(0)\to X$, $i: M\setminus f^{-1}(0)\to X$ are the inclusions,
and $\hat{f}=f|_{X\setminus f^{-1}(0)}$ is the restriction. 
The map $\pi: \tilde{\cc}^{*}$ is the universal cover of $\cc^{*}$ and 
$E$ is the pullback.

Let $\cc$ be the constant sheaf on $X$. The nearby cycle of $f$ is defined by
$$\psi_{f}\cc:=j^{*}R(i\circ\hat{\pi})_{*}(i\circ\hat{\pi})^{*}\cc.$$
Let $x\in f^{-1}(0)$. For any $\epsilon<0$,  let 
$B_{\epsilon}(x)$ be the $\epsilon$-ball of $x$ in $M$. Then 
$$\mathbf{H}^i(\psi_{f}\cc)_{x}\cong H^{i}(B_{\epsilon}(x)\cap X\cap f^{-1}(\delta),\cc),$$
is the cohomology of the Milnor fiber,
where $0<\delta<<\epsilon$. 

The Vanishing Cycle $\varphi_{f}$ is defined by the triangle:
$$
\xymatrix{
j^{*}\cc\rto^{}&~\psi_{f}\cc\dlto_{}\\
\varphi_{f}\cc\uto.}
$$
Then for any  $x\in f^{-1}(0)$, 
$$\mathbf{H}^i(\varphi_{f}\cc)_{x}\cong H^{i}(B_{\epsilon}(x)\cap X, B_{\epsilon}(x)\cap M\cap f^{-1}(\delta),\cc),$$
the reduced cohomology of the Milnor fiber. 

\sss Let $\Fsch$ be the category of $\stft$ formal $R$-schemes.
Let $\XX\in \Fsch$ be a formal $R$-scheme.  For $n\geq 1$, denote the scheme 
$(\XX, \oO_{\XX}/t^n\oO_{\XX})$ by $\XX_n$.  
A morphism of formal schemes over $R$, $\phi: \YY\to\XX$ is said to be \'etale if for all 
$n\geq 1$, the induced morphisms of schemes $\phi_n: \YY_n\to\XX_n$ are \'etale.

Let $\phi: \YY\to \XX$ be a morphism of formal schemes. Then it induces the morphism between the generic and central fibres, i.e.
$\phi_\eta: \YY_\eta\to\XX_\eta$ and $\phi_s: \YY_s\to\XX_s$, where
$\phi_\eta$ is a morphism of Berkovich analytic spaces and $\phi_s$ is a morphism of schemes. 
Here are two known results from  \cite{Ber4} which are needed to construct vanishing cycles.

\begin{lem}\label{functor1}
The correspondence $\YY\mapsto\YY_s$ gives an equivalence between the category of formal schemes \'etale over $\XX$ and the category of schemes \'etale over $\XX_s$.
\end{lem}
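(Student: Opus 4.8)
The plan is to deduce this from the topological invariance of the \'etale site applied level by level, together with the definition of a formal \'etale morphism as a compatible system over the infinitesimal thickenings $\XX_n$.

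First I would recall the classical rigidity statement (SGA~1, as used in \cite{Ber4}): if $Z$ is a scheme and $I\subset\oO_Z$ a nilpotent quasi-coherent ideal, and $Z_0=(Z,\oO_Z/I)$ denotes the corresponding thickening, then the base-change functor $W\mapsto W\times_Z Z_0$ is an equivalence from the category of schemes \'etale over $Z$ to the category of schemes \'etale over $Z_0$; in particular \'etale morphisms lift uniquely across nilpotent thickenings. I apply this with $Z=\XX_n$ and $I=t\oO_{\XX_n}$, which is nilpotent since $t^n=0$ in $\oO_{\XX_n}=\oO_\XX/t^n\oO_\XX$. The reduction functor then gives an equivalence between schemes \'etale over $\XX_n$ and schemes \'etale over $\XX_1=\XX_s$, and these equivalences are compatible as $n$ varies, because the closed immersions $\XX_n\hookrightarrow\XX_{n+1}$ all induce the identity on $\XX_s$.

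Next I would invoke the definition. A formal $R$-scheme $\YY$ \'etale over $\XX$ is exactly one for which each $\YY_n\to\XX_n$ is \'etale; moreover $\YY_n=\YY_{n+1}\times_{\XX_{n+1}}\XX_n$ and $\YY=\varinjlim_n\YY_n$ as a formal scheme. Hence the category of formal schemes \'etale over $\XX$ is naturally equivalent to the $2$-limit over $n$ of the categories of schemes \'etale over $\XX_n$, with transition functors given by reduction. By the previous paragraph each term of this inverse system is equivalent to the category of schemes \'etale over $\XX_s$, compatibly with the transition maps, so the limit is again equivalent to the category of schemes \'etale over $\XX_s$; under this equivalence $\YY$ is sent to $\YY_s$, which is precisely the stated functor.

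For the quasi-inverse I would start from an \'etale $\XX_s$-scheme $Z_0$ and lift it: rigidity produces a unique \'etale $\XX_n$-scheme $Z_n$ with $Z_n\times_{\XX_n}\XX_s\cong Z_0$, and uniqueness forces the $Z_n$ into a coherent system satisfying $Z_n=Z_{n+1}\times_{\XX_{n+1}}\XX_n$; setting $\YY:=\varinjlim_n Z_n$ gives a formal scheme \'etale over $\XX$ with $\YY_s\cong Z_0$. The point requiring genuine care, beyond the formal category theory, is the passage to the limit: one must verify that the compatible system $(Z_n)$ assembles into an honest separated $\stft$ formal $R$-scheme, and that the construction is local on $\XX$ so that it globalizes by gluing lifts along overlaps. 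Both rest again on the uniqueness of \'etale lifts across the nilpotent thickenings $\XX_n$, so I expect the main obstacle to be not the level-wise equivalence (which is standard) but this verification that the limit object is representable by a formal scheme of the required type and that fully faithfulness is preserved in the limit.
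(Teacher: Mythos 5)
Your argument is correct and is essentially the same proof that the paper delegates to \cite{Ber4} (the lemma is stated there without proof, as a known result of Berkovich): the equivalence is obtained precisely by topological invariance of the \'etale site across the nilpotent thickenings $\XX_n$, the identification of \'etale formal schemes over $\XX$ with compatible systems of \'etale schemes over the $\XX_n$, and passage to the limit. Your closing caveat --- checking that the compatible system $(Z_n)$ is representable by a separated $\stft$ formal $R$-scheme, which reduces to the affine case and gluing via uniqueness of lifts --- is exactly the only nonformal step in Berkovich's argument as well, so there is no substantive difference to report.
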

\begin{rmk}
Similarly the correspondence $\YY\mapsto\YY_\eta$ gives an equivalence between the category of formal schemes \'etale over $\XX$ and the category of Berkovich analytic spaces \'etale over $\XX_\eta$.
\end{rmk}

\begin{lem}\label{key-lemma1}
Let $\phi: \YY\to \XX$ be an \'etale morphism of formal schemes.
Then  $$\phi_{\eta}(\YY_\eta)=sp^{-1}(\phi_s(\YY_s)).$$
\end{lem}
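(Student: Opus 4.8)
\noindent The plan is to obtain the equality from the naturality of the specialization map together with the infinitesimal lifting property of \'etale morphisms; only the inclusion $\supseteq$ will use that $\phi$ is \'etale.

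I would first record that the specialization map is functorial in the formal $R$-scheme, i.e.\ that the square
$$
\xymatrix{
\YY_\eta \ar[r]^{sp_\YY} \ar[d]_{\phi_\eta} & \YY_s \ar[d]^{\phi_s} \\
\XX_\eta \ar[r]^{sp_\XX} & \XX_s
}
$$
commutes; this is part of the formalism of \cite{Ber4}. The inclusion $\phi_\eta(\YY_\eta)\subseteq sp^{-1}(\phi_s(\YY_s))$ is then immediate, since for $y\in\YY_\eta$ one has $sp_\XX(\phi_\eta(y))=\phi_s(sp_\YY(y))\in\phi_s(\YY_s)$. This half needs nothing about $\phi$ beyond its being a morphism of formal schemes.

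For the reverse inclusion I would fix $x\in\XX_\eta$ with $sp_\XX(x)\in\phi_s(\YY_s)$ and show that the fibre $\phi_\eta^{-1}(x)$ is nonempty. Let $\hH(x)$ be the completed residue field of $x$ and $R':=\hH(x)^{\circ}$ its valuation ring, a complete rank one (hence henselian) local $R$-algebra. Since $\XX$ is topologically of finite type, the generators of its defining algebra are power bounded, so the character attached to $x$ factors through $R'$ and defines a morphism of formal $R$-schemes $\theta:\spf(R')\to\XX$ with generic fibre the point $x$ and whose special fibre $\theta_s$ carries the closed point to $sp_\XX(x)$. It then suffices to lift $\theta$ to $\tilde\theta:\spf(R')\to\YY$ with $\phi\circ\tilde\theta=\theta$, possibly after replacing $\hH(x)$ by a finite extension; the generic fibre of $\tilde\theta$ produces the desired point $y$ over $x$.

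I would build the lift $t$-adically, using that $\phi$ \'etale means each reduction $\phi_n:\YY_n\to\XX_n$ is formally \'etale. The base case lifts $\theta_1:\spec(R'/tR')\to\XX_s$ along $\phi_s$: as $R'/tR'$ is a quotient of the henselian local ring $R'$ it is itself henselian local, and the hypothesis $sp_\XX(x)\in\phi_s(\YY_s)$ says the fibre of the \'etale map $\phi_s$ over the closed point is nonempty, so (after the above finite extension to split a residue field if necessary) a lift $\tilde\theta_1$ exists and extends uniquely over $\spec(R'/tR')$. For the inductive step the inclusion $\spec(R'/t^{n-1}R')\hookrightarrow\spec(R'/t^nR')$ is a square-zero thickening, so formal \'etaleness of $\phi_n$ gives a unique $\tilde\theta_n$ extending $\tilde\theta_{n-1}$ over $\theta_n$, and the compatible system assembles into $\tilde\theta$. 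The equivalences of \'etale sites in Lemma \ref{functor1} and its Remark guarantee that these special- and generic-fibre constructions are compatible.

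I expect the main obstacle to be the foundational rather than the combinatorial part: justifying the valuative description of points of $\XX_\eta$ as formal $R'$-points, and checking that the assembled $\tilde\theta$ genuinely recovers $x$ on generic fibres, which needs care with the $t$-adic topology on $R'$ when the value group of $\hH(x)$ is non-discrete, together with the residue-field extension required to place a point of the \'etale fibre of $\phi_s$ over $sp_\XX(x)$. These are exactly the technical inputs supplied by \cite{Ber4}, on which the argument relies.
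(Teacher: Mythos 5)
Your proposal is correct, and it is genuinely more than what the paper records: the paper's entire proof of Lemma \ref{key-lemma1} is the commutative square $sp\circ\phi_\eta=\phi_s\circ sp$, which is exactly your first step and only yields the inclusion $\phi_\eta(\YY_\eta)\subseteq sp^{-1}(\phi_s(\YY_s))$. That inclusion holds for \emph{any} morphism of formal schemes, so the diagram alone cannot prove the lemma --- for a closed immersion $\phi$ the right-hand side is the whole tube over $\phi_s(\YY_s)$ and the equality fails --- and the nontrivial content is the reverse inclusion, which the paper implicitly defers to \cite{Ber4}, the source of the lemma. Your second half supplies precisely this missing content, and your mechanism is the right one (it is essentially Berkovich's own): realize $x\in\XX_\eta$ as a formal $\spf(\hH(x)^\circ)$-point $\theta$ of $\XX$ whose special fibre hits $sp(x)$, lift $\theta_1$ along the \'etale map $\phi_s$ using henselianity of $\hH(x)^\circ/t$ (after a finite unramified extension of $\hH(x)$ to realize the residue extension $k(\tilde y)/k(sp(x))$, which exists since that extension is finite separable), propagate the lift through the square-zero thickenings $\spec(R'/t^{n-1}R')\hookrightarrow\spec(R'/t^nR')$ by formal \'etaleness of the $\phi_n$, and pass to generic fibres to produce $y$ with $\phi_\eta(y)=x$; the seminorm on the defining algebra is unchanged under the isometric extension $K'/\hH(x)$, so $y$ really lies over $x$. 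The technical caveats you flag (power-boundedness of the generators so that the character lands in $R'$, $t$-adic completeness of $R'$ even when the value group is non-discrete, and the residue-field bookkeeping) are genuine but standard, and are exactly the foundational inputs of \cite{Ber4}. In short: where the paper gives a one-line citation-style argument, you give a self-contained proof that correctly isolates the only place \'etaleness is used; the trade-off is length versus actually closing the logical gap the paper leaves open.
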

\begin{pf}
This statement is from the following commutative diagram:
$$
\xymatrix{
\YY_\eta\rto^{\phi_\eta}\dto_{sp}&\XX_{\eta}\dto^{sp}\\
\YY_s\rto^{\phi_s}&\XX_s.}
$$
\end{pf}

\sss The \'etale morphim between $\kk$-analytic spaces can be similarly defined. 
Denote by $\XX_{\eta_{\et}}$ the \'etale site of $\XX_\eta$, which is the site induced from the Grothendieck topology of all \'etale morphisms of $\kk$-analytic spaces. 
Let  $\XX^{\sim}_{\eta_{\et}}$ be the category of sheaves of sets on the \'etale site $\XX_{\eta_{\et}}$.

For two Berkovich $\kk$-analytic spaces $\XX_\eta$ and $\YY_\eta$. A morphism $\psi: \YY_\eta\to\XX_\eta$ is called ``quasi-\'etale" if for every point $y\in \YY_\eta$ there exist affinoid domains $V_{\eta,1}, \cdots, V_{\eta,n}\subset \YY_\eta$ such that the union $V_{\eta,1},\cup\cdots\cup V_{\eta,n}$ is a neighbourhood of $y$ and each $V_{\eta,i}$ is identified with an affinoid domain in a $\kk$-analytic space \'etale over $\XX_\eta$.  

A basic fact from \cite{Ber4} is that an \'etale morphism $\phi: \YY\to \XX$ of formal schemes induces a quasi-\'etale morphism $\phi_\eta: \YY_\eta\to\XX_\eta$ over the generic fibres.
Denote by $\XX_{\eta_{q\et}}$ the quasi-\'etale site of $\XX_\eta$, which is the site induced from the Grothendieck topology of all quasi-\'etale morphisms of $\kk$-analytic spaces.
Let $\XX^{\sim}_{\eta_{q\et}}$ be the category of sheaves of sets on the quasi-\'etale site $\XX_{\eta_{q\et}}$. 
There exists a natural morphism of sites
\begin{equation}
\mu:  \XX^{\sim}_{\eta_{q\et}}\to \XX^{\sim}_{\eta_{\et}}, 
\end{equation}
which is understood as the pullback. 

Let $\YY_s\mapsto\YY$ be the functor obtained from inversing the functor in Lemma \ref{functor1}.
Then from Lemma \ref{key-lemma1} and the fact that \'etale morphisms on formal schemes induce quasi-\'etale morphisms on generic fibres, the composition of the functors $\YY_s\mapsto\YY$ and 
$\YY\mapsto\YY_\eta$ gives a morphism of sites
\begin{equation}
\nu: \XX_{\eta,q\et}\to\XX_{s,\et}.
\end{equation}

\sss 
Let 
\begin{equation}
\Theta=\nu_*\mu^*: \XX^{\sim}_{\eta_{\et}}\to\XX^{\sim}_{\eta_{q\et}}\to\XX^{\sim}_{s_{\et}}
\end{equation}
be the functor obtained from composition. 
Let $\kk^s$ denote the separate closure of the extension of the field $\kk$. 
Let $F$ be an \'etale abelian torsion sheaf over $\XX_\eta$.  Let $\XX_{\overline{\eta}}=\XX_\eta\otimes \kk^s$, and $\overline{F}$ the pullback of $F$ to $\XX_{\overline{\eta}}$.
Then define the nearby cycle functor $\Psi_{\eta}$ by 
\begin{defn} The nearby cycle functor is defined as 
$$\Psi_{\eta}(F)=\Theta_{\widehat{\kk^s}}(\overline{F}).$$
The vanishing cycle functor $\Phi_{\eta}$ is defined to be the cone 
$$\cone[F\to\Psi_{\eta}(F)].$$
\end{defn}

Let $x\in \XX$ be a point in the Maurer-Cartan locus and $\qq_l$ be an \'etale abelian sheaf. Then the stalk 
$$R^{i}\Psi_{\eta}(\qq_l)_x\cong H^i_{\et}(\FF_x,\qq_l)$$ 
is isomorphic to the \'etale cohomology $H^i_{\et}(\FF_x,\qq_l)$ of the analytic Milnor fibre $\FF_x$.  The stalk of the vanishing cycle
$$R^{i}\Phi_{\eta}(\qq_l)_x\cong \tilde{H}^i_{\et}(\FF_x,\qq_l)$$  
is isomorphic to the  reduced \'etale cohomology $\tilde{H}^i_{\et}(\FF_x,\qq_l)$ of the analytic Milnor fibre $\FF_x$. 

\sss \textbf{Comparison Theorem:}

 Let $\hat{F}$ be the corresponding \'etale abelian sheaf on $\XX_\eta$.
 
 \begin{prop}(\cite{Ber4})\label{comparison}
 There exists an isomorphism for an \'etale abelian torsion sheaf $F$ over $X_\eta$:
 $$ i^*(R^qj_*(F))\cong R^q(\Psi_\eta(\hat{F})).$$
 \end{prop}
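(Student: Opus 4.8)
This is Berkovich's comparison theorem, so the plan is not to reprove his deepest input but to show how the asserted isomorphism falls out of the site-theoretic machinery already assembled above, by testing stalkwise at geometric points of $\XX_s$ and matching the classical and Berkovich nearby-cycle computations through the specialization map $sp$. Here $j$ is the open immersion of the generic fibre into the algebraic model and $i$ the closed immersion of the special fibre, while $\hat F$ is the sheaf on the Berkovich generic fibre $\XX_\eta$ corresponding to $F$.

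First I would reduce to stalks. An isomorphism between the two objects $i^*Rj_*(F)$ and $R\Psi_\eta(\hat F)$ of \'etale sheaves on $\XX_s$ may be checked on the stalks at geometric points $\bar x\to\XX_s$, so it suffices to produce, functorially in $F$, isomorphisms $(i^*R^qj_*F)_{\bar x}\cong (R^q\Psi_\eta(\hat F))_{\bar x}$. The right-hand side is already identified, in the paragraph preceding the statement, with the \'etale cohomology $H^q_{\et}(\FF_x,\hat F)$ of the analytic Milnor fibre $\FF_x=sp^{-1}(\bar x)$, via $\Psi_\eta=\Theta_{\widehat{\kk^s}}$ and $\Theta=\nu_*\mu^*$. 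Thus the task is to identify the classical nearby-cycle stalk on the left with $H^q_{\et}(sp^{-1}(\bar x),\hat F)$.

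Next I would expand the left-hand stalk. By the standard computation of $i^*Rj_*$ on the strict henselization, $(i^*R^qj_*F)_{\bar x}$ is the filtered colimit, over \'etale neighbourhoods $\YY_s\to\XX_s$ of $\bar x$, of the cohomology $H^q$ of $F$ on the generic fibre of $\YY$. The bridge to the analytic side is furnished by the equivalences recorded above: Lemma \ref{functor1} turns each such $\YY_s\to\XX_s$ into an \'etale formal scheme $\YY\to\XX$, and its Remark turns this into a quasi-\'etale Berkovich space $\YY_\eta\to\XX_\eta$. The decisive geometric input is Lemma \ref{key-lemma1}, $\phi_\eta(\YY_\eta)=sp^{-1}(\phi_s(\YY_s))$: under these correspondences the generic fibre attached to an \'etale neighbourhood of $\bar x$ is exactly the $sp$-preimage of the corresponding neighbourhood in $\XX_s$, and as $\YY_s$ ranges over a cofinal system of \'etale neighbourhoods of $\bar x$ these preimages contract to the tube $sp^{-1}(\bar x)=\FF_x$.

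The heart of the argument, and the main obstacle, is then the identification
$$\varinjlim_{\YY_s}H^q_{\et}\big(sp^{-1}(\phi_s(\YY_s)),\hat F\big)\;\cong\;H^q_{\et}(\FF_x,\hat F),$$
which has two ingredients: a level-by-level comparison of the algebraic \'etale cohomology of each generic fibre with the analytic \'etale cohomology of the corresponding tube, and a continuity statement asserting that \'etale cohomology of Berkovich spaces commutes with the filtered limit of the analytic domains shrinking to $\FF_x$. The genuinely nontrivial content lies in the finiteness and constructibility needed to interchange $R^q$ with the limit and to rule out higher-limit contributions; this is precisely the deep input of \cite{Ber4}, which I would invoke rather than reprove. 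Granting it, combining the displayed isomorphism with the identification of the right-hand stalk yields the stalkwise isomorphism, and functoriality in $F$ together with compatibility with base change to $\widehat{\kk^s}$ (built into the definition $\Psi_\eta=\Theta_{\widehat{\kk^s}}$) assembles these into the asserted isomorphism of sheaves on $\XX_s$.
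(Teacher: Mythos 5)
The paper offers no proof of Proposition \ref{comparison}: it is stated with the citation \cite{Ber4} and imported wholesale, so your decision to invoke Berkovich for the decisive step is exactly aligned with the paper's treatment, and your scaffolding (reduction to stalks at geometric points of $\XX_s$, Lemma \ref{functor1} to lift \'etale neighbourhoods of $\bar x$ to \'etale formal schemes, Lemma \ref{key-lemma1} to identify their generic fibres with the tubes $sp^{-1}(\phi_s(\YY_s))$ shrinking to $\FF_x$) is a reasonable reconstruction of how Berkovich's argument is organized.

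Two caveats if your sketch were taken as an actual proof rather than a gloss on the citation. First, stalkwise isomorphisms do not by themselves assemble into an isomorphism of sheaves; one must first construct a global comparison morphism $i^*R^qj_*(F)\to R^q\Psi_\eta(\hat F)$ and then verify it is an isomorphism on stalks. Such a morphism does exist --- it arises from the morphisms of sites $\mu$ and $\nu$ already in play, together with the natural map from the algebraic \'etale site of $X_\eta$ to the (quasi-)\'etale site of $\XX_\eta$ --- but your text defers this to ``functoriality assembles these,'' which puts the steps in the wrong order. Second, your claimed ``level-by-level comparison of the algebraic \'etale cohomology of each generic fibre with the analytic \'etale cohomology of the corresponding tube'' is false at each finite stage: for an algebraic \'etale neighbourhood $Y\to X$ of $\bar x$, the tube $sp^{-1}(\phi_s(Y_s))$ is a strictly smaller analytic domain than the analytification of the algebraic generic fibre $Y_\eta$ (for $X=\aaa^1_R$, the tube over a closed point of the special fibre is an open unit disc, not the whole analytic affine line), so the two cohomology groups differ term by term and agree only after passing to the colimit over \'etale neighbourhoods --- and that interchange is precisely the content of Berkovich's theorem, not a routine continuity statement combined with a termwise match. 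Since you explicitly consign ``the genuinely nontrivial content'' to \cite{Ber4}, these are defects of exposition rather than fatal gaps, but the level-by-level claim should be deleted or restated as holding only in the colimit.
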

 
 Let $\hat{F}$ be an \'etale abelian constructible sheaf over the analytic Milnor fibre $\FF_x$, and 
$H^q_{\et}(\FF_x, \hat{F})$ the \'etale cohomology of $\FF_x$.
 
 \begin{prop}
 Let $f$ be a regular function and let $F_x$ be the topological Milnor fibre of $f$ at $x$. 
 Suppose that the formal scheme $\MM$ is the $t$-adic completion of the morphism 
 $f: M=\spec(A)\to\spec(\cc[t])$. Then 
$$H^i(F_x,\cc)\cong H^i(\FF_x, \zz_l)\otimes \cc.$$
This isomorphism is compatible with the monodromy action.
 \end{prop}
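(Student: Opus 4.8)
The plan is to realize the stated isomorphism as a composite of four comparison statements, three of which are already recorded in the excerpt, and to verify that each is equivariant for the relevant monodromy action. Write $i\colon f^{-1}(0)\hookrightarrow M$ and $j\colon M\setminus f^{-1}(0)\hookrightarrow M$ for the inclusions in the vanishing-cycle diagram, and recall that by hypothesis $\MM$ is the $t$-adic completion of $f$. First I would use the classical computation of the complex nearby-cycle stalk recalled above, namely $\mathbf{H}^i(\psi_f\cc)_x\cong H^i(B_\epsilon(x)\cap X\cap f^{-1}(\delta),\cc)$ for $0<\delta\ll\epsilon$; since the right-hand space is by definition the topological Milnor fibre $F_x$, this gives $H^i(F_x,\cc)\cong \mathbf{H}^i(\psi_f\cc)_x$, with monodromy on the left induced by the generator of $\pi_1(\cc^*)$ acting through the universal cover $\tilde{\cc}^*\to\cc^*$.

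Next I would pass from complex to $\ell$-adic nearby cycles. Since $f$ is a regular function on the $\cc$-scheme $M$, the Artin comparison between singular and $\ell$-adic \'etale cohomology, applied to the nearby-cycle complexes, gives a monodromy-equivariant isomorphism $\mathbf{H}^i(\psi_f\cc)_x\cong (i^*R^i j_*\zz_l)_x\otimes\cc$ after fixing an embedding $\qq_l\hookrightarrow\cc$, whose right-hand stalk is the $\ell$-adic algebraic nearby-cycle stalk at $x$. Under this comparison the geometric monodromy of $\psi_f$ corresponds to the canonical generator of $\mathrm{Gal}(\kk^s/\kk)$ acting on the $\ell$-adic stalk.

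I would then invoke Berkovich's comparison theorem, Proposition \ref{comparison}, applied to the torsion sheaves $\zz/\ell^n$ and passed to the $\ell$-adic limit: because $\MM$ is the $t$-adic completion of $f$, its hypotheses hold, and it identifies the algebraic nearby-cycle stalk $(i^*R^i j_*\zz_l)_x$ with the stalk $R^i\Psi_\eta(\widehat{\zz_l})_x$ of Berkovich's nearby-cycle functor. Finally the stalk formula recorded above, $R^i\Psi_\eta(\qq_l)_x\cong H^i_{\et}(\FF_x,\qq_l)$, with $\zz_l$-coefficients in place of $\qq_l$, turns this last stalk into $H^i(\FF_x,\zz_l)$. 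Composing the four isomorphisms yields $H^i(F_x,\cc)\cong H^i(\FF_x,\zz_l)\otimes\cc$.

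The main obstacle is the monodromy compatibility in the last two steps. One must check that Berkovich's isomorphism in \cite{Ber4} intertwines the geometric monodromy governing $\psi_f$ with the action of $\mathrm{Gal}(\kk^s/\kk)$ on $\Psi_\eta$, and that under the stalk identification $R^i\Psi_\eta(\qq_l)_x\cong H^i_{\et}(\FF_x,\qq_l)$ this Galois action is carried to the natural monodromy on the \'etale cohomology of the analytic Milnor fibre. This equivariance is part of Berkovich's construction; the work therefore reduces to invoking the equivariant form of Proposition \ref{comparison} and tracking the canonical generator of the tame fundamental group through each identification, the remaining steps being standard and functorial.
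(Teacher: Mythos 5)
Your proposal is correct and takes essentially the same route as the paper: the paper's two-line proof also consists of applying Berkovich's comparison (Proposition \ref{comparison}) to the torsion sheaves $\zz/l^n$ and passing to the inverse limit to get $\zz_l$-coefficients. Your version simply spells out the links the paper leaves implicit --- the identification of the classical nearby-cycle stalk with $H^i(F_x,\cc)$, the Artin comparison between complex and $\ell$-adic nearby cycles, the stalk formula $R^i\Psi_\eta(\qq_l)_x\cong H^i_{\et}(\FF_x,\qq_l)$, and the monodromy equivariance of each identification --- so no correction is needed.
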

 \begin{pf}
 First the sheaf $\zz/l^n$ is an torsion \'etale sheaf. From the comparison Proposition \ref{comparison},
 $$H^i(F_x,\zz/l^n)\cong H^i(\FF_x, \zz/l^n).,$$
 for all $i\geq 0$. Hence 
 \begin{align*}
H^i(\FF_x,\zz_l)&=\underleftarrow{\lim}H^i(\FF_x,\zz/l^n)\\
&= \underleftarrow{\lim}H^i(F_x,\zz/l^n)\\
&= H^i(F_x,\zz_l).
\end{align*}
\end{pf}

\begin{rmk}
Let $M:=\mathbb{V}(df)$, then this is the germ or local moduli space determined by the algebra $L$. 
Let $\nu_{M}$ be the Behrend function in \cite[Definition 1.4]{Be}.
Then
$$\nu_{M}(P)=(-1)^{m}\chi(L).$$
\end{rmk}

\subsection*{\textbf{The Joyce-Song blow up formula}.} 

\sss We prove a result as in \cite[Theorem 4.11]{JS} for Behrend function of blow-ups 
in the formal scheme setting. 
Let $\hat{f}: \XX\to\spf(R)$ be a generically smooth special formal scheme over $R$ and 
$\ZZ\subset \XX$ a closed embedded formal subscheme. 
Let 
$$\phi: \tilde{\XX}\to \XX$$
be the formal blow-up of $\XX$ along $\ZZ$. 
For details of  formal blow-up for $special$ formal schemes see
\cite{Ni}. 
Let $y\in \ZZ\cap \Crit(f)$, then 
 $\phi^{-1}(y)=\pp(T_{y}\XX/T_{y}\ZZ)$ is contained in $\Crit(\tilde{f})$.
 
\begin{prop}\label{lemma-key}
Let $\chi(-)$ be the Euler characteristic of the \'etale cohomology of the analytic spaces. Then
$$\int_{w\in \pp(T_y\XX/T_y\ZZ)}\chi(\FF_{w}(\tilde{f}))d\chi= \chi(\FF_{y}(f))+(\dim(\XX)-\dim(\ZZ)-1)\chi(\FF_{y}(f|_{\ZZ})),$$
where $\int_{w\in \pp(T_y\XX/T_y\ZZ)}\chi(\FF_{w}(\tilde{f}))d\chi$ is understood as the weighted Euler characteristic. 
\end{prop}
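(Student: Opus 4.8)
The plan is to realize both sides as Euler characteristics of stalks of nearby-cycle complexes and to deduce the identity from the compatibility of the functor $\Psi_\eta$ with the proper blow-down $\phi$, together with the \'etale blow-up decomposition of $R\phi_{\eta*}\qq_l$. Write $N=\dim(\XX)-\dim(\ZZ)-1$, so that the exceptional fibre $\phi^{-1}(y)=\pp(T_y\XX/T_y\ZZ)$ is a projective space of dimension $N$ and $\ZZ$ has codimension $N+1$ in $\XX$ near $y$. I work locally around $y$, where the stated formula $\phi^{-1}(y)=\pp(T_y\XX/T_y\ZZ)$ already encodes that $\ZZ$ is smooth at $y$ and that the blow-up is modeled on the blow-up of a linear subspace.

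First I would rewrite the left-hand side sheaf-theoretically. The formal blow-up of \cite{Ni} is proper, so the induced map $\phi_\eta\colon\tilde{\XX}_\eta\to\XX_\eta$ on generic fibres is proper and $\phi_s\colon\tilde{\XX}_s\to\XX_s$ has $\phi_s^{-1}(y)=\pp(T_y\XX/T_y\ZZ)$. By proper base change on the special fibre, the stalk at $y$ of $R\phi_{s*}\Psi_\eta(\qq_l)$ is $R\Gamma(\phi_s^{-1}(y),\Psi_\eta(\qq_l)|_{\phi_s^{-1}(y)})$; using the stalk identification $R^i\Psi_\eta(\qq_l)_w\cong H^i_{\et}(\FF_w(\tilde f),\qq_l)$ recorded before Proposition \ref{comparison}, together with the additivity of $\chi$ over the constructible function $w\mapsto\chi(\FF_w(\tilde f))$, the Euler characteristic of this stalk is exactly the weighted Euler characteristic $\int_{w\in\pp(T_y\XX/T_y\ZZ)}\chi(\FF_w(\tilde f))\,d\chi$ on the left.

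The second step moves the nearby-cycle functor across $\phi$. The key input is the Berkovich analogue of the classical commutation of nearby cycles with proper pushforward, namely a natural isomorphism
$$R\phi_{s*}\circ\Psi^{\tilde{\XX}}_\eta\;\cong\;\Psi^{\XX}_\eta\circ R\phi_{\eta*},$$
so that the stalk above is also the stalk at $y$ of $\Psi^{\XX}_\eta(R\phi_{\eta*}\qq_l)$. Since $\ZZ$ is smooth of codimension $N+1$ near $y$, the projective-bundle structure of the exceptional divisor yields the \'etale blow-up decomposition
$$R\phi_{\eta*}\qq_l\;\cong\;\qq_{l}\oplus\bigoplus_{j=1}^{N}(\iota_\ZZ)_*\qq_{l}(-j)[-2j],$$
where $\iota_\ZZ\colon\ZZ_\eta\hookrightarrow\XX_\eta$ is the closed immersion. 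Applying $\Psi^{\XX}_\eta$ and using that it commutes with the proper closed immersion $\iota_\ZZ$, so that $\Psi^{\XX}_\eta(\iota_\ZZ)_*\cong(\iota_\ZZ)_*\Psi^{\ZZ}_\eta$ with $\Psi^{\ZZ}_\eta$ the nearby-cycle functor of $f|_\ZZ$, the first summand contributes $\Psi^{\XX}_\eta(\qq_l)$ and each of the remaining $N$ summands contributes an even shift and Tate twist of $(\iota_\ZZ)_*\Psi^{\ZZ}_\eta(\qq_l)$.

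Taking stalks at $y$ and Euler characteristics then finishes the argument: the stalk of $\Psi^{\XX}_\eta(\qq_l)$ contributes $\chi(\FF_y(f))$, while the stalk of each twisted summand $(\iota_\ZZ)_*\Psi^{\ZZ}_\eta(\qq_l)(-j)[-2j]$ contributes $\chi(\FF_y(f|_\ZZ))$, the even shift $[-2j]$ and twist $(-j)$ leaving $\chi$ unchanged; summing the $N$ identical contributions gives the asserted identity $\int_{w\in\pp(T_y\XX/T_y\ZZ)}\chi(\FF_w(\tilde f))\,d\chi=\chi(\FF_y(f))+N\,\chi(\FF_y(f|_\ZZ))$. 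The main obstacle is this second step: establishing in the special-formal-scheme/Berkovich setting both the commutation of $\Psi_\eta$ with the proper pushforward along the formal blow-down and the blow-up decomposition of $R\phi_{\eta*}\qq_l$ on the \'etale site of the generic fibre. Since both facts are classical in the complex-analytic topology (nearby cycles commute with proper maps, and the blow-up formula is the projective-bundle formula), an alternative route is to transport the entire computation to the topological Milnor fibre via the comparison isomorphisms of \cite{Ber4} and \cite{Ber-future} and to run the identical additivity argument there, where constructibility of $w\mapsto\chi(\FF_w(\tilde f))$ makes the weighted integral literally the alternating sum of stalk cohomologies.
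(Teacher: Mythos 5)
Your sheaf-theoretic route is genuinely different from the paper's argument, and its first step is sound: identifying the weighted integral $\int_{w\in \pp(T_y\XX/T_y\ZZ)}\chi(\FF_{w}(\tilde{f}))d\chi$ with the Euler characteristic of the stalk at $y$ of $R\phi_{s*}\Psi_\eta(\qq_l)$ is essentially the content of the paper's Lemma \ref{lemma:1} (proved there by inclusion-exclusion rather than proper base change). The gap is exactly where you flag it, and it is more serious than your proposal suggests. The two inputs of your second step --- the commutation $R\phi_{s*}\circ\Psi^{\tilde{\XX}}_\eta\cong\Psi^{\XX}_\eta\circ R\phi_{\eta*}$ and the decomposition $R\phi_{\eta*}\qq_l\cong\qq_l\oplus\bigoplus_{j=1}^{N}(\iota_\ZZ)_*\qq_l(-j)[-2j]$ --- are classical for schemes and in the complex-analytic topology, but neither is available off the shelf for the nearby-cycle functor $\Psi_\eta$ of special formal $R$-schemes defined via the quasi-\'etale site, and you do not prove them. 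The decomposition moreover presupposes that $\ZZ_\eta\hookrightarrow\XX_\eta$ is (at least) a regular immersion of smooth $\kk$-analytic spaces with $\tilde{\XX}_\eta$ the analytic blow-up along it; but the proposition only assumes $\XX$ generically smooth and assumes nothing about $\ZZ$ --- note that $\ZZ_\eta$ is the generic fibre of $f|_\ZZ$, which in the intended Joyce--Song application can be singular. Finally, your fallback of transporting the computation to the topological Milnor fibre through the comparison theorems of \cite{Ber4} and \cite{Ber-future} only applies when $f$ is algebraic or holomorphic; the proposition is stated for $f\in R\{T\}[\![S]\!]$, a genuinely formal power series in the $S$-variables, where there is no topological Milnor fibration to compare with, so the fallback proves only a special case.

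The paper's proof sidesteps all of this sheaf theory. It uses the commutative square (\ref{diagramblowup}) to equate $(sp\circ\phi_{\eta})^{-1}(y)=(\phi_s\circ sp)^{-1}(y)$, computes the right-hand side as your weighted integral (Lemma \ref{lemma:1}), and then, in Lemma \ref{lemma:2}, computes the left-hand side by writing out the charts $\UU_i=\spf(A_i)$ of the formal blow-up explicitly and decomposing the Berkovich point set by conditions on the absolute values $\vert T_j(x)\vert$, $\vert\zeta_j(x)\vert$ into the disjoint union $\left(\FF_{y}(f|_{\ZZ})\times \pp_{\kk,\ber}^{m-l-1}\right)\sqcup (\FF_{y}(f)\setminus \FF_{y}(f|_{\ZZ}))$; additivity of $\chi$ and multiplicativity for the product with $\pp_{\kk,\ber}^{m-l-1}$ then yield the formula. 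This chart-level stratification needs no smoothness of $\ZZ_\eta$, no constructibility statement, and no commutation theorem, which is what makes it work in the stated generality. To salvage your approach you would either have to establish the two sheaf-theoretic inputs for special formal schemes (a nontrivial undertaking in Berkovich's framework) or replace your second step by the paper's explicit decomposition of the generic fibre of the formal blow-up.
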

\begin{pf}
We may prove  the formula for the case of affine formal schemes.  
Let $\XX=\spf(A)$, where $A=R\{T\}[\![S]\!]/(f-t)$.  
Let $\ZZ\subset \XX$ be a subscheme determined by the ideal $I=(T_{l+1},\cdots,T_m)\subset A$, which is 
$t$-open. 
Then 
$\ZZ=\spf(B)$, where $B=R\{T_1,\cdots,T_l\}[\![S_1,\cdots,S_n]\!]/(f-t)$ for $l<m$.  
The formal blow-up 
$$\tilde{\XX}=\lim_{\substack{\rightarrow\\
n\in\nn}}\Proj \left(\oplus_{d=0}^{\infty}I^d\otimes_{R}(R/t^n)\right).$$
The morphism $\phi$ induces the following commutative diagram:
\begin{equation}\label{diagramblowup}
\xymatrix{
~\tilde{\XX}_{\eta}\dto_{sp}\rto^{\phi_{\eta}}&\XX_{\eta}\dto^{sp}\\
~\tilde{\XX}_s\rto^{\phi_s}&\XX_s,}
\end{equation}
where $sp$ is the specialization map from the generic fiber to the special fiber. 
For any $y\in\ZZ_s\subset \XX_s$, 
$$(sp\circ\phi_{\eta})^{-1}(y)=(\phi_s\circ sp)^{-1}(y).$$

\begin{lem}\label{lemma:1}
We have 
$$\chi((\phi_s\circ sp)^{-1}(y))=\int_{w\in \pp(T_y\XX/T_y\ZZ)}\chi(\FF_{w}(\tilde{f}))d\chi.$$
\end{lem}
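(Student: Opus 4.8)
The plan is to exhibit $(\phi_s\circ sp)^{-1}(y)$ as a family of analytic Milnor fibres over the exceptional projective space $\pp(T_y\XX/T_y\ZZ)$ and then to invoke the motivic (additive and multiplicative) behaviour of the \'etale Euler characteristic. First I would rewrite the left-hand side using the commutativity of diagram (\ref{diagramblowup}): since $(sp\circ\phi_\eta)^{-1}(y)=(\phi_s\circ sp)^{-1}(y)$, and $\phi_s\colon\tilde{\XX}_s\to\XX_s$ is the blow-up of the special fibre along $\ZZ_s$, the fibre $\phi_s^{-1}(y)$ over the point $y\in\ZZ_s$ is exactly the projectivised normal space $\pp(T_y\XX/T_y\ZZ)\cong\pp^{m-l-1}$, the exceptional divisor of $\tilde{\XX}_s\to\XX_s$ over $y$. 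Hence
$$(\phi_s\circ sp)^{-1}(y)=sp^{-1}\big(\phi_s^{-1}(y)\big)=sp^{-1}\big(\pp(T_y\XX/T_y\ZZ)\big),$$
where now $sp\colon\tilde{\XX}_\eta\to\tilde{\XX}_s$ is the specialization map of the blow-up.

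Next I would analyse the restriction of $sp$ to this preimage. For a single point $w\in\pp(T_y\XX/T_y\ZZ)\subset\tilde{\XX}_s$, the definition of the analytic Milnor fibre (Definition \ref{analyticmilnorfiber}), together with the polynomial-case description $\FF_w=sp^{-1}(w)$, identifies the fibre of $sp$ over $w$ with the analytic Milnor fibre $\FF_w(\tilde{f})$ of the pulled-back potential $\tilde{f}=\phi^*f$ at $w$. Thus $sp$ restricts to a map
$$(\phi_s\circ sp)^{-1}(y)\longrightarrow\pp(T_y\XX/T_y\ZZ),\qquad sp^{-1}(w)=\FF_w(\tilde{f}),$$
displaying the left-hand side as the total space of the family of analytic Milnor fibres parametrised by the exceptional $\pp^{m-l-1}$.

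The final step is to pass from the total space to the weighted integral. I would stratify $\pp(T_y\XX/T_y\ZZ)$ into finitely many locally closed subsets $B_\alpha$ on which the function $w\mapsto\chi(\FF_w(\tilde{f}))$ is constant; on the affine charts of the blow-up $\tilde{f}$ has an explicit form that makes the restricted family $sp^{-1}(B_\alpha)$ locally trivial with fibre a fixed analytic Milnor fibre. By the additivity of the \'etale Euler characteristic over such a decomposition and its multiplicativity for these locally trivial families of Berkovich spaces, one obtains $\chi(sp^{-1}(B_\alpha))=\chi(B_\alpha)\cdot\chi(\FF_{w_\alpha}(\tilde{f}))$, and summing over $\alpha$ recovers precisely the weighted Euler characteristic $\int_{w\in\pp(T_y\XX/T_y\ZZ)}\chi(\FF_w(\tilde{f}))\,d\chi$.

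The main obstacle is this last step: one must know that the \'etale Euler characteristic of Berkovich analytic spaces is genuinely motivic, i.e. additive across locally closed stratifications and multiplicative along locally trivial fibrations, and that $w\mapsto\chi(\FF_w(\tilde{f}))$ is a constructible function so that the weighted integral is well defined. Establishing the requisite local triviality of the family of analytic Milnor fibres over the strata $B_\alpha$ (using the explicit structure of $\tilde{f}$ in the blow-up charts) is where the real work lies; the reduction to a finite sum and the bookkeeping of $\chi(B_\alpha)$ over the projective space is then routine.
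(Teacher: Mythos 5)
Your proposal is correct and takes essentially the same route as the paper: the paper's own one-line proof likewise notes that $\phi_s^{-1}(y)$ is the projective space $\pp(T_y\XX/T_y\ZZ)$ and then appeals to ``inclusion-exclusion and trivial fibration relations of the Euler characteristics,'' which is precisely your stratification-plus-additivity/multiplicativity step applied to the fibrewise identification $sp^{-1}(w)=\FF_{w}(\tilde{f})$. You merely make explicit (and honestly flag as requiring work) the constructibility of $w\mapsto\chi(\FF_{w}(\tilde{f}))$ and the local triviality over strata that the paper's proof leaves implicit.
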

\begin{pf}
Since $\phi_s^{-1}(y)$ is the projective space $\pp(T_y\XX/T_y\ZZ)$, the result comes from the inclusion-exclusion and trivial fibration relations of the Euler characteristics. 
\end{pf}

\begin{lem}\label{lemma:2}
$$\chi((sp\circ\phi_{\eta})^{-1}(y))=\chi(\FF_{y}(f))+(\dim(\XX)-\dim(\ZZ)-1)\chi(\FF_{y}(f|_{\ZZ})).$$
\end{lem}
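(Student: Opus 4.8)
The plan is to reduce the computation of $\chi((sp\circ\phi_{\eta})^{-1}(y))$ to an analysis of the blow-up map on generic fibres, combined with the scissor (additivity) and fibration (multiplicativity) relations for the \'etale Euler characteristic. First I would use the commutativity of diagram (\ref{diagramblowup}) in the form $(sp\circ\phi_{\eta})^{-1}(y)=\phi_{\eta}^{-1}(sp^{-1}(y))$, and recall that $sp^{-1}(y)$ is by construction the analytic Milnor fibre $\FF_{y}(f)$ (Definition \ref{analyticmilnorfiber}). Thus the task becomes the computation of $\chi(\phi_{\eta}^{-1}(\FF_{y}(f)))$.

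The key geometric input, which I would take from the theory of formal blow-ups of special formal schemes in \cite{Ni}, is that the formal blow-up commutes with passage to the generic fibre, so that $\phi_{\eta}\colon \tilde{\XX}_{\eta}\to\XX_{\eta}$ is the blow-up of the Berkovich analytic space $\XX_{\eta}$ along the closed analytic subspace $\ZZ_{\eta}$. Since the centre $\ZZ$ is cut out by the partial coordinate system $T_{l+1},\dots,T_{m}$, and hence is regularly (indeed linearly) embedded, the map $\phi_{\eta}$ is an isomorphism over $\XX_{\eta}\setminus\ZZ_{\eta}$, while over every point of $\ZZ_{\eta}$ the fibre is the projectivised normal space $\pp(T_{y}\XX/T_{y}\ZZ)=\pp^{m-l-1}$, whose \'etale Euler characteristic is $m-l=\dim(\XX)-\dim(\ZZ)$.

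Next I would split the analytic Milnor fibre according to the centre,
$$\FF_{y}(f)=\big(\FF_{y}(f)\setminus\ZZ_{\eta}\big)\sqcup\big(\FF_{y}(f)\cap\ZZ_{\eta}\big),$$
and establish the crucial identification $\FF_{y}(f)\cap\ZZ_{\eta}=\FF_{y}(f|_{\ZZ})$. This holds because $\ZZ=\spf(B)$ with $B=R\{T_{1},\dots,T_{l}\}[\![S]\!]/(f-t)$ realizes $f|_{\ZZ}$ as the image of $f$ under $T_{l+1}=\dots=T_{m}=0$, so the points of $\FF_{y}(f)$ lying on $\ZZ_{\eta}$ are exactly the points of $\ZZ_{\eta}$ specializing to $y\in\ZZ_{s}$ on which the potential vanishes, i.e. $sp^{-1}_{\ZZ}(y)=\FF_{y}(f|_{\ZZ})$. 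Applying $\phi_{\eta}^{-1}$ and using that $\phi_{\eta}$ is an isomorphism over the first piece and an \'etale-locally trivial $\pp^{m-l-1}$-bundle over the second, the additivity and multiplicativity of the weighted \'etale Euler characteristic (the same properties invoked in Lemma \ref{lemma:1}) give
$$\chi\big(\phi_{\eta}^{-1}(\FF_{y}(f))\big)=\chi\big(\FF_{y}(f)\setminus\ZZ_{\eta}\big)+(m-l)\,\chi\big(\FF_{y}(f|_{\ZZ})\big).$$
Substituting $\chi(\FF_{y}(f)\setminus\ZZ_{\eta})=\chi(\FF_{y}(f))-\chi(\FF_{y}(f|_{\ZZ}))$ and $m-l=\dim(\XX)-\dim(\ZZ)$ yields the asserted identity.

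The step I expect to be the main obstacle is justifying the fibration structure of $\phi_{\eta}$ at the \'etale level, so that the multiplicative relation $\chi(\text{total})=\chi(\pp^{m-l-1})\cdot\chi(\text{base})$ is legitimate for these Berkovich analytic spaces over the restricted base $\FF_{y}(f)\cap\ZZ_{\eta}$. Concretely, one must check that $\phi_{\eta}$ restricts to an (\'etale-locally) trivial projective bundle there, and that the \'etale Euler characteristic is simultaneously additive for the scissor decomposition and multiplicative for such bundles; this is precisely where the comparison machinery of \cite{Ber4} and the weighted Euler characteristic formalism must be applied with care. Once these two properties are secured, the remaining bookkeeping is routine.
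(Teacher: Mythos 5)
Your proposal is correct and follows essentially the same route as the paper: the same decomposition of $(sp\circ\phi_{\eta})^{-1}(y)$ into the part over the centre and its complement, the same identification $\FF_{y}(f)\cap\ZZ_{\eta}=\FF_{y}(f|_{\ZZ})$, and the same additivity-plus-multiplicativity bookkeeping with $\chi(\pp^{m-l-1})=m-l$. The step you flag as the main obstacle is exactly what the paper's proof supplies: writing $\tilde{\XX}$ in the standard blow-up charts $\UU_i=\spf(A_i)$ with $A_i=C_i/(t\text{-torsion})$ and gluing their generic fibres, it exhibits the preimage of the centre piece as the honest product $\FF_{y}(f|_{\ZZ})\times\pp^{m-l-1}_{\kk,\ber}$ (the normal bundle is trivial because $\ZZ$ is cut out by the coordinates $T_{l+1},\dots,T_m$), so only K\"unneth multiplicativity for a genuine product, rather than \'etale-local triviality of a bundle, is needed.
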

\begin{pf}
We prove that 
$$(sp\circ\phi_{\eta})^{-1}(y)=\left(\FF_{y}(f|_{\ZZ})\times \pp_{\kk,\ber}^{m-n-1}\right)\sqcup (\FF_{y}(f)\setminus \FF_{y}(f|_{\ZZ})),$$
where $\pp_{\kk,\ber}^{m-n-1}$ is the Berkovich projective space. 

The formal scheme $\tilde{\XX}$ is covered by affine formal schemes 
$\UU_i$ for $n+1\leq i\leq m$. The affine formal scheme $\UU_i=\spf(A_i)$ and 
$$A_i=C_i/(t-\mbox{torsion})_{C_i},$$
where 
$$C_i=A\left\{\frac{T_i}{T_j}: j\neq i\right\}=A\{\zeta_j: j\neq i\}/(T_i\zeta_j-T_j, j\neq i).$$ 
The 
torsion is given by
$$(t-\mbox{torsion})_{C_i}=\{f\in C_i: t^nf=0 ~\mbox{for some}~ n\in\nn\}.$$
Let $\aA_i=A_i\otimes\kk$.
Let $\UU_{i,\eta}$ be the generic fiber of $\UU_i$, which is a Berkovich space $\mM(\aA_i)$. Then
\[
\UU_{i,\eta}=\left\{
x\in\mM(\aA_i)\Big| 
\begin{array}{l}
\text{$ \vert T_s(x)\vert\leq 1, 1\leq s\leq l; \vert T_j(x)\vert\leq \vert T_i(x)\vert, l+1\leq j\leq m, j\neq i;$}\\
\text{$ \vert S_t(x)\vert< 1, 1\leq t\leq n;$.}
\end{array}\right\}
\]
Gluing $\UU_{i,\eta}$ for $l+1\leq i\leq m$ we get the generic fiber $\tilde{\XX}_{\eta}$. 

From the definition of $\UU_i$ we write down $\UU_{i,\eta}$ as
\[
\UU_{i,\eta}= 
\left\{
x\in\mM(\aA_i) \Big|
 \begin{array}{l}
  \text{$ \vert T_s(x)\vert\leq 1, 1\leq s\leq l; \vert S_t(x)\vert< 1, 1\leq t\leq n;$} \\
  \text{$ \vert T_j(x)\vert\leq 1$, and $\vert \zeta_j\vert \leq 1,  n+1\leq j\leq m;$} \\
  \text{$\vert \zeta_j(x)\vert\cdot\vert T_i(x)\vert=\vert T_j(x)\vert, n+1\leq j\leq m, j\neq i$.} 
 \end{array}\right\}
\]
Then
\[
\UU_{i,\eta}= 
\left\{
x\in\mM(\aA_i) \Big|
 \begin{array}{l}
  \text{$ \vert T_s(x)\vert\leq 1, 1\leq s\leq l;\vert S_t(x)\vert< 1, 1\leq t\leq n;$} \\
  \text{$\vert \zeta_j\vert \leq 1,  n+1\leq j\leq m;$} \\
  \text{$\vert \zeta_j(x)\vert\cdot\vert T_i(x)\vert=\vert T_j(x)\vert, n+1\leq j\leq m, j\neq i$.}\\
   \text{not all $\vert T_j(x)\vert=0$} 
 \end{array}\right\}
 \]
 \[
 \sqcup
 \left\{
x\in\mM(\aA_i) \Big|
 \begin{array}{l}
  \text{$ \vert T_s(x)\vert\leq 1, 1\leq s\leq l; \vert S_t(x)\vert< 1, 1\leq t\leq n;$} \\
  \text{$\vert \zeta_j\vert \leq 1,  n+1\leq j\leq m;$} \\
  \text{all $\vert T_j(x)\vert=0$ for $j\neq i$} 
 \end{array}\right\}
\] 
\[ 
=\left\{
x\in\mM(\aA_i) \Big|
 \begin{array}{l}
  \text{$ \vert T_s(x)\vert\leq 1, 1\leq s\leq l; \vert S_t(x)\vert< 1, 1\leq t\leq n;$} \\
  \text{$\vert \zeta_j\vert \leq 1,  n+1\leq j\leq m;$} \\
  \text{$\vert \zeta_j(x)\vert\cdot\vert T_i(x)\vert=\vert T_j(x)\vert, n+1\leq j\leq m, j\neq i$.}\\
   \text{not all $\vert T_j(x)\vert=0$} 
 \end{array}\right\}
 \]
 \[
 \sqcup
 \left\{
x\in\mM(\aA_i) \Big|
 \begin{array}{l}
  \text{$ \vert T_s(x)\vert\leq 1, 1\leq s\leq l;$}
 \end{array}\right\}\times E^{m-l-1}(0,1),
\]     
\[
:=\UU_{i,\eta}^{1}\sqcup\UU_{i,\eta}^{2}
\]
where $E^{m-l-1}(0,1)= \{\vert \zeta_j\vert \leq 1,  l+1\leq j\leq m, j\neq i\}$ is the closed unit disc 
in the affine space $\aaa^{m-l-1}$.    
Let $\aA_{\ZZ}=B\otimes_{R}\kk$. 
Then the analytic spaces $\UU_{i,\eta}^2$ glue to form the analytic space
$\{x\in\mM(\aA_{\ZZ})| \vert T_s(x)\vert\leq 1, 1\leq s\leq l; \vert S_t(x)\vert< 1, 1\leq t\leq n;\}\times \pp_{\kk,\ber}^{m-l-1}$, which is 
$\FF_{y}(f|_{\ZZ})\times \pp_{\kk,\ber}^{m-l-1}$.  

The morphism $\phi_{\eta}$ is an isomorphism over the points that not all $\vert T_j(x)\vert=0$ for $l+1\leq j\leq m$. 
The analytic spaces $\UU_{i,\eta}^1$ glue to form the analytic space           
\begin{multline*}
\{x\in\mM(\aA)| \vert T_s(x)\vert\leq 1, 1\leq s\leq m; \vert S_t(x)\vert< 1, 1\leq t\leq n;\}-  \\
 \{x\in\mM(\aA_{\ZZ})| \vert T_s(x)\vert\leq 1, 1\leq s\leq l; \vert S_t(x)\vert< 1, 1\leq t\leq n;\},
\end{multline*} 
which is exactly $(\FF_{y}(f)\setminus \FF_{y}(f|_{\ZZ}))$.        
\end{pf}
From Lemma \ref{lemma:1} and Lemma \ref{lemma:2}, The formula in Proposition \ref{lemma-key} is  proved.
\end{pf}


\section{Thom-Sebastiani Theorem}\label{Thom}

\subsection*{Operation of $L_\infty$-algebras.}

\sss Let  $(L_1, \kappa_1, \mu_1)$ and  $(L_2, \kappa_2, \mu_2)$ be two cyclic $L_\infty$-algebras of dimension $3$.  Define $L:=L_1\oplus L_2$ by
$$L^{n}=L_1^{n}\oplus L_2^{n}.$$
Then 
\begin{equation}
L=\bigoplus_{n\in \zz}L^n=\bigoplus_{n\in \zz}L_{1}^{n}\oplus L_{2}^{n}.
\end{equation}

\begin{defn}For a positive integer $k$, 
let $\mu_k : L^{\otimes k}\to L[2-k]$ be given by 
$$\mu_k=\mu_{1,k}\oplus\mu_{2,k}: (L_1\oplus L_2)^{\otimes k}\to (L_1\oplus L_2)[2-k].$$
\end{defn}

\begin{rmk}
The linear maps $\mu_k$ is given by the diagonal matrix
$$
\mu:=\left[\begin{array}{cc}
\mu_{1,k}&0 \\
0&\mu_{2,k}
\end{array}\right].
$$
Here  $\mu_{1,k}: L_{1}^{\otimes k}\to L_{1}[2-k]$ and $\mu_{2,k}: L_{2}^{\otimes k}\to L_{2}[2-k]$
are the higher linear map of the cyclic $L_\infty$-algebras $L_1$ and $L_2$,  and on the anti-diagonal positions the map is zero.  This means that the map $\mu_k$ on the tensor product 
$L_1\otimes L_{2}$ is zero.
\end{rmk}

\begin{defn}Let 
$$\kappa:=\kappa_1\oplus\kappa_2:  L\otimes L\to \cc$$
be the bilinear form induced from $\kappa_1$ and $\kappa_2$ on 
$L_{1}$ and $L_{2}$. Similar to the above definition, the map $\kappa$ is given by 
$$
\mu:=\left[\begin{array}{cc}
\kappa_1&0 \\
0&\kappa_2
\end{array}\right].
$$
\end{defn}

\begin{thm}
The  higher linear maps $\mu=(\mu_1,\mu_2, \cdots)$ and the bilinear form $\kappa$ defined above give a cyclic $L_\infty$-algebra structure $(L,\mu,\kappa)$ on $L$.
\end{thm}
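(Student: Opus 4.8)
The plan is to exploit the block-diagonal structure of both $\mu$ and $\kappa$. By construction $\mu_k$ and $\kappa$ vanish on any tuple that mixes elements of $L_1$ with elements of $L_2$, and on a tuple lying entirely in $L_j$ they coincide with $\mu_{j,k}$ respectively $\kappa_j$. Since all the maps in sight are multilinear, I would first reduce every identity to the case of homogeneous inputs $a_1,\dots,a_n$ each of which lies entirely in $L_1$ or entirely in $L_2$. The verification then splits uniformly into ``pure'' cases (all inputs in one factor) and ``mixed'' cases, and the whole argument becomes bookkeeping of this no-mixing property.

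For the higher Jacobi identities (\ref{jacobi-linfty}), in the pure case where all inputs lie in $L_1$ (resp. $L_2$) the sum collapses term-by-term to the corresponding identity for $L_1$ (resp. $L_2$), which holds by hypothesis: each composite $\mu_{n-l+1}(\mu_l(\cdots),\cdots)$ equals the analogous composite for the single factor, and the Koszul signs $\epsilon(\sigma)$ together with the parities $\widetilde{\sigma}$ depend only on the $\zz$-degrees, which are preserved by the inclusion $L_j\hookrightarrow L$. In the mixed case I would argue that every summand vanishes: for a fixed shuffle $\sigma$, the inner product $\mu_l(a_{\sigma(1)},\dots,a_{\sigma(l)})$ is nonzero only when those $l$ arguments lie in a single factor, say $L_1$, in which case its output again lies in $L_1$; the outer product $\mu_{n-l+1}$ then has a first slot valued in $L_1$, so it is nonzero only if the remaining arguments $a_{\sigma(l+1)},\dots,a_{\sigma(n)}$ also lie in $L_1$, forcing all $n$ inputs into $L_1$ and contradicting mixedness. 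Hence each term is zero and the identity holds trivially.

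The cyclic conditions of Definition \ref{cyclic-l-infty} are handled in the same spirit. Graded symmetry of $\kappa$ is immediate: on same-factor inputs it reduces to the graded symmetry of $\kappa_j$, while on mixed inputs both $\kappa(a,b)$ and $\kappa(b,a)$ vanish. For the cyclicity relation, on pure inputs it reduces verbatim to the relation for the relevant factor, the sign $(-1)^{n+x_1(x_2+\cdots+x_{n+1})}$ again depending only on degrees; in the mixed case both sides vanish, since $\mu_n(x_1,\dots,x_n)$ (resp. $\mu_n(x_2,\dots,x_{n+1})$) is nonzero only when those arguments share a factor, and pairing against $x_{n+1}$ (resp. $x_1$) is nonzero only when that element shares the same factor, so nonvanishing of either side would force all $n+1$ inputs into one factor.

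Finally, for the perfect-pairing condition I would use that the differential is $d=\mu_1=\mu_{1,1}\oplus\mu_{2,1}$, so the cohomology splits as $H^i(L)=H^i(L_1)\oplus H^i(L_2)$. Under this splitting the induced pairing $H^i(L)\otimes H^{d-i}(L)\to\cc$ is block-diagonal, with diagonal blocks the pairings $H^i(L_j)\otimes H^{d-i}(L_j)\to\cc$ and vanishing off-diagonal blocks; since each diagonal block is perfect by hypothesis, the direct-sum pairing is perfect. The proof thus requires no genuinely hard step; the one point demanding care will be confirming that the Koszul signs and permutation parities in (\ref{jacobi-linfty}) and in the cyclicity relation factor through the degrees alone, so that the pure-factor summands match the component identities exactly. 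This is automatic, since those signs are defined solely in terms of the $\zz$-gradings, which the inclusions $L_j\hookrightarrow L$ preserve.
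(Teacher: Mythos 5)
Your proof is correct and follows exactly the route the paper intends: the paper's own ``proof'' is the one-line assertion that the statement ``can be checked using the definition of $L_\infty$-algebras,'' and your verification --- reducing to pure versus mixed homogeneous inputs, observing that mixed terms vanish because $\mu_k$ and $\kappa$ are zero on tensors mixing $L_1$ and $L_2$, and deducing perfectness of the pairing from the splitting $H^i(L)=H^i(L_1)\oplus H^i(L_2)$ --- is precisely the direct check being alluded to. You have simply supplied the details the paper omits, including the correct observation that the Koszul signs depend only on the $\zz$-grading, so no discrepancy arises in the pure-factor cases.
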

\begin{pf}
This can be checked using the definition of $L_\infty$-algebras.
\end{pf}

\subsection*{The Thom-Sebastiani formula.}
\sss  Let $(L_1, \kappa_1, \mu)$ and  $(L_2, \kappa_2, \nu)$ be two cyclic $L_\infty$-algebras of dimension $3$.  We assume that $H^{i}(L_1)\neq 0$, $H^{i}(L_2)\neq 0$ only for $i=1, 2$.
 Let $X_1=H^{1}(L_1)$,  $X_2=H^{1}(L_2)$ be the linear manifolds, and $P_1$, $P_2$
 the origins.
 
 For simplicity we first assume that $L_i=H^{*}(L_i)$ for $i=1, 2$.
Let $H:=H^{*}(L)$ and $R=S(H[1])$ be the symmetric coalgebra. Let $Q$ be the codifferential.
Then 
$$X:=H^{1}(L)=L^{1}=L_{1}^{1}\oplus L_{2}^{1}=X_1\oplus X_2.$$ 
The potential function 
\begin{equation}
f:  X\to \cc
\end{equation}
is given by the formula
$$
f(z)=\sum_{k=2}^\infty\frac{(-1)^{\frac{k(k+1)}{2}}}{(k+1)!}\kappa\big(\mu_k(z,\ldots,z),z\big).
$$
Since $M=M_1\oplus M_2$, we use the coordinate $z=(z_1,z_2)$, then from the definition of $\mu$ we have that $\mu_{k}(z,\cdots,z)=\mu_{1,k}(z_1,\cdots,z_1)+\mu_{2,k}(z_2,\cdots,z_2)$, then the potential function $f$ satisfies 
$$f(z)=f_{1}(z_1)+f_{2}(z_2),$$
where $f_1$ and $f_2$ are the potential functions of the cyclic $L_\infty$-algebras $L_1$ and $L_{2}$.

\begin{thm}
We have the following formula
$$(1-\chi(L))=(1-\chi(L_{1}))\cdot (1-\chi(L_{2})).$$
\end{thm}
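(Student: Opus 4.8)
The plan is to reduce the identity to the Thom--Sebastiani formula for the reduced cohomology of the analytic Milnor fibres, and then to transport that formula from the classical topological setting through the comparison theorem. We have already arranged that the coordinates of $X=H^1(L)$ split as $z=(z_1,z_2)$ with $z_i\in X_i=H^1(L_i)$ and that the potential decouples as $f(z)=f_1(z_1)+f_2(z_2)$; thus $\FF_0(f)$ is the analytic Milnor fibre of a Thom--Sebastiani sum. Writing $P=(P_1,P_2)$ for the common origin, the first step is to express each quantity $1-\chi(L_\bullet)$ through the vanishing cycle functor. Using the stalk identifications recorded above, namely $R^i\Phi_\eta(\qq_l)_P\cong \tilde H^i_\et(\FF_P,\qq_l)$ together with $\tilde H^i_\et = H^i_\et$ for $i>0$ and $\dim\tilde H^0_\et = \dim H^0_\et -1$, one obtains
$$
1-\chi(L)=-\sum_i(-1)^i\dim_{\qq_l}R^i\Phi_\eta(\qq_l)_P = -\chi\big(\Phi_\eta(\qq_l)\big)_P,
$$
and likewise for $L_1,L_2$ at $P_1,P_2$. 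Hence the assertion is equivalent to the multiplicativity
$$
\chi\big(\Phi_\eta(\qq_l)\big)_P = -\,\chi\big(\Phi_\eta(\qq_l)\big)_{P_1}\cdot \chi\big(\Phi_\eta(\qq_l)\big)_{P_2},
$$
the extra sign being exactly the degree shift by one that appears in every Thom--Sebastiani statement.

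For the main route I would reduce to the complex-analytic Milnor fibre and invoke the classical theorem. By the work of Behrend--Getzler \cite{BG} (and \cite{JS} in the sheaf case) the potentials $f$, $f_1$ and $f_2$ are holomorphic in the complex analytic topology, so each carries a topological Milnor fibre $F_P(f)$, $F_{P_i}(f_i)$. Berkovich's comparison theorem \cite{Ber4}, in the form extended to holomorphic $\hat f\colon\spf(A)\to\spf(R)$ by \cite{Ber-future}, identifies the nearby and vanishing cycle functors $\Psi_\eta,\Phi_\eta$ with the complex-analytic $\psi_f,\varphi_f$; concretely
$$
H^i_\et(\FF_P(f),\zz_l)\otimes\cc\;\cong\;H^i(F_P(f),\cc),
$$
compatibly with the splitting $f=f_1+f_2$, and the same for $f_1,f_2$. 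In particular $\chi(L)=\chi(\FF_P(f))=\chi(F_P(f))$ and $\chi(L_i)=\chi(F_{P_i}(f_i))$, so it suffices to prove the product formula for the topological Milnor fibres of the holomorphic Thom--Sebastiani sum $f=f_1+f_2$.

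The last step is the classical Thom--Sebastiani theorem, which I would apply in its sheaf-theoretic form so as not to assume isolated singularities: the vanishing cycle complexes satisfy $\varphi_f(\cc)_P\cong \varphi_{f_1}(\cc)_{P_1}\otimes_\cc\varphi_{f_2}(\cc)_{P_2}$ up to a shift by one, whence on reduced cohomology $\tilde H^{k}(F_P)\cong\bigoplus_{i+j=k-1}\tilde H^{i}(F_{P_1})\otimes\tilde H^{j}(F_{P_2})$. Taking Euler characteristics turns the direct sum into a product and the shift into the overall sign, giving $(1-\chi(F_P))=(1-\chi(F_{P_1}))(1-\chi(F_{P_2}))$; combined with the two comparison identities of the previous paragraph this is exactly $(1-\chi(L))=(1-\chi(L_1))(1-\chi(L_2))$. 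Finally one removes the simplifying hypothesis $L_i=H^*(L_i)$ through the transfer theorem (Theorem \ref{linfty-coh}): choosing the homotopy on $L=L_1\oplus L_2$ to be the direct sum $\eta=\eta_1\oplus\eta_2$, the transferred cyclic $L_\infty$-structure on $H^*(L)$ is the direct sum of the transferred structures on $H^*(L_i)$, so the potential germ and its analytic Milnor fibre are unchanged and $\chi(L_i)$ is computed on $H^*(L_i)$ as before.

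I expect the genuine obstacle to be the comparison step rather than the classical theorem: the functors $\Psi_\eta,\Phi_\eta$ built in this paper live over a special (not $\stft$) formal scheme, because $f$ is only strictly convergent in $T$ and formal in $S$, so their identification with $\psi_f,\varphi_f$ is not covered by the $\stft$ statement of \cite{Ber4} and relies on the extension \cite{Ber-future}; one must in addition check that this comparison is natural for the product decomposition $X=X_1\oplus X_2$, i.e.\ that it intertwines the Thom--Sebastiani structures on the two sides. A fully nonarchimedean alternative would bypass \cite{Ber-future} by proving the convolution formula $\Phi_{\eta,f}\cong\Phi_{\eta,f_1}\boxtimes\Phi_{\eta,f_2}$ directly for Berkovich vanishing cycles, by analysing the fibration $f_1(z_1)+f_2(z_2)=t$ on the open polydisc; this is precisely the Euler-characteristic shadow of the motivic Thom--Sebastiani formula proved later, from which it could be deduced by the Euler-characteristic realization.
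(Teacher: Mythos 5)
Your bookkeeping is correct: the identity $1-\chi(\FF_P)=-\chi\bigl(\Phi_\eta(\qq_l)\bigr)_P$, the join formula $\tilde H^{k}(F_P)\cong\bigoplus_{i+j=k-1}\tilde H^{i}(F_{P_1})\otimes\tilde H^{j}(F_{P_2})$ with the degree shift producing exactly the sign you need, and the final translation back to $(1-\chi(L))=(1-\chi(L_1))(1-\chi(L_2))$ are all sound. But your \emph{main} route has a genuine gap in the stated generality, and it is not the paper's route. The theorem is asserted for arbitrary cyclic $L_\infty$-algebras of dimension $3$, where the potential is merely an element of $R\{T\}[\![S]\!]$ — strictly convergent in the $T$-variables, formal in the $S$-variables. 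The Behrend--Getzler holomorphicity you invoke from \cite{BG} is proved for the superpotential of the geometric algebras $\Ext^*(E,E)$ attached to objects on a compact Calabi--Yau threefold, not for an abstract cyclic $L_\infty$-algebra; in general $f$ has no complex-analytic incarnation, so the topological Milnor fibres $F_P(f)$, $F_{P_i}(f_i)$ that your comparison step feeds on need not exist, independently of the further reliance on the unpublished comparison of \cite{Ber-future} and the unchecked naturality of that comparison with respect to the splitting $X=X_1\oplus X_2$. Your route does work where \cite{BG} applies, i.e.\ in the geometric setting of Section \ref{Joyce}, but it does not prove the theorem as stated.

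The fix is the ``fully nonarchimedean alternative'' you relegate to your final paragraph: that \emph{is} the paper's proof. The paper simply quotes the Thom--Sebastiani theorem of Denef--Loeser \cite{DeLo1} together with Le's extension to formal functions \cite{Thuong}, which applies verbatim to $f=f_1+f_2$ with $f\in R\{T\}[\![S]\!]$ over $\kk=\cc(\!(t)\!)$ and yields the identity on Euler characteristics of the analytic Milnor fibres by realization, with no convergence hypothesis and no complex-analytic detour. So you should promote that alternative to the main argument and demote the holomorphic comparison to a remark valid in the geometric case. One genuine improvement in your write-up worth keeping: the paper reduces to $L_i=H^*(L_i)$ ``for simplicity'' without comment, whereas your observation that the homotopy may be chosen as $\eta=\eta_1\oplus\eta_2$, so that the transferred cyclic structure on $H^*(L_1\oplus L_2)$ is the direct sum of the transferred structures and the potential germ is unchanged, makes that reduction precise.
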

\begin{pf}
This is the classical Thom Sebastiani formula as proved in \cite{DeLo1} and  \cite{Thuong}.
\end{pf}


\section{Applications-Derived category of coherent sheaves on Calabi-Yau 3-fold.}\label{Joyce}

\subsection*{Cyclic dg Lie algebra of semi-Schur objects.}
\sss Let $Y$ be a compact Calabi-Yau 3-fold. 
Let $D^{b}(\Coh(Y))$ be the bounded derived category of coherent sheaves on the Calabi-Yau 3-fold 
$Y$.  For an arbitrary derived object $E\in D^{b}(\Coh(Y))$, the formal deformation theory of $E$ is determined by a cyclic $L_\infty$-algebra $\Ext^*(E,E)$ with in general $\Ext^{<0}(E,E)\neq 0$. 
According to \cite{Getzler}, 
this deformation theory is controlled by the corresponding Deligne $l$- or $\infty$-groupoid.  
There exists a construction of the moduli stack of such objects using $\infty$-stacks by B. Toen.  The Behrend function on the moduli stack   has not been defined.  

\sss In the following we fix a Bridgeland stability condition on the derived category $D^{b}(\Coh(Y))$. 
The heart of the corresponding bounded  $t$-structure is an abelian category $\mathcal{A}$. The moduli stack of semi-stable objects in $\mathcal{A}$ can be constructed.  For any object 
$E\in \mathcal{A}$, the statement $\Ext^{<0}(E,E)=0$ is true.  Since $Y$ is Calabi-Yau,   
$\Ext^{>3}(E,E)=0$  holds from Serre duality.  We call such objects "semi-schur".

\sss Let $E\in\mathcal{A}$ be an object and let $R\Hom(E,E)$ be the cyclic dg Lie algebra.  
Choosing an affine open cover $\mathcal{U}$ of $Y$ and there is   a cyclic dg Lie algebra 
$$L(\mathcal{U},E)=\check{C}\com(\mathcal{U},R\Hom(E,E))$$ 
using $\check{C}$ech complex as in \cite{BG},  \cite{Jiang}.  Then on the cohomology 
$\Ext^*(E,E)$ there is a $L_\infty$-algebra structure. 
We denote by $f: \Ext^1(E,E)\to \cc$ the superpotential function.
Let $X_{E}:=\Ext^1(E,E)$.
Then we have the special formal $\cc[\![t]\!]$-scheme 
$$\hat{f}_E: \XX_{E}\to \spf(\cc[\![t]\!])$$
which is the $t$-adic formal completion of $f$. 

\begin{defn}\label{characteristic}
If $E$ is semi-Schur, i.e. $\Ext^{<0}(E,E)=0$, then we define the Euler characteristic  
$$\chi(E)=\chi(\FF_0(\hat{f}_{E})),$$
where $\FF_0(\hat{f}_{E})$ is the analytic Milnor fiber of the origin. 
\end{defn}

\begin{rmk}
If $E$ is Schur or stable, i.e. $\Ext^{<1}(E,E)=0$, then
$$\chi(E)=\chi(L(\mathcal{U}, E))=\chi(\Ext^*(E,E))$$
as in Definition \ref{euleranalyticmilnorfiber}.
\end{rmk}

\begin{lem}
The  Euler characteristic of $E$ does not depend on the choice of the affine
cover $\mathcal{U}$, or the way we write $E$ as a complex of locally frees.
\end{lem}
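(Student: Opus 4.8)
The plan is to reduce the asserted independence to two facts: the homotopy invariance of the transferred cyclic $L_\infty$-structure, and the invariance of the analytic Milnor fiber under formal changes of coordinates. First I would note that the graded vector space $\Ext^*(E,E)$, together with its Serre-duality pairing, is intrinsic to the object $E\in D^b(\Coh(Y))$ and depends on neither the affine cover $\mathcal{U}$ nor the chosen representation of $E$ by locally frees; only the cyclic dg Lie algebra model $L(\mathcal{U},E)=\check{C}\com(\mathcal{U},R\Hom(E,E))$ does. Given two covers $\mathcal{U}$ and $\mathcal{U}'$, I would pass to a common refinement $\mathcal{W}$: the \v{C}ech restriction maps $L(\mathcal{U},E)\to L(\mathcal{W},E)$ and $L(\mathcal{U}',E)\to L(\mathcal{W},E)$ are quasi-isomorphisms of dg Lie algebras, and one checks they are compatible with the cyclic pairings, which are given by cup product followed by the trace/integration pairing on $Y$ and so are independent of the cover up to the relevant homotopies. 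The same argument handles two locally free resolutions of $E$, which are linked by quasi-isomorphisms inducing cyclic quasi-isomorphisms on $R\Hom(E,E)$.

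Next I would invoke the cyclic transfer theorem (Theorem \ref{linfty-coh}, after \cite{KS}, \cite{BG}) for each model. This produces cyclic $L_\infty$-structures on the common cohomology $\Ext^*(E,E)$, and the uniqueness part of homotopy transfer yields a \emph{cyclic} $L_\infty$-isomorphism $\Phi$ between them whose linear component $\Phi_1$ is the canonical identity on $\Ext^*(E,E)$, since every comparison map induces the identity on cohomology. In the Kontsevich--Soibelman picture a cyclic $L_\infty$-structure of dimension $3$ is equivalent to a formal potential on the shifted space endowed with its symplectic pairing, and a cyclic $L_\infty$-isomorphism corresponds to a formal symplectomorphism under which the potential pulls back. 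Restricting to the components $\mu_k\colon (H^1)^{\otimes k}\to H^2$ that enter the definition of $f$, this produces a formal change of coordinates $\phi\colon X_E\to X_E$ with $\phi(0)=0$ and linear part the identity, satisfying $f'=f\circ\phi$.

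Finally I would transport this to the analytic side. The formal automorphism $\phi$ preserves the special $R$-algebra $R\{T\}[\![S]\!]$, because its components are assembled from the same products and homotopies and hence are strictly convergent in the $T$-variables and formal in the $S$-variables; it therefore induces an isomorphism of special formal $R$-schemes $\XX_E\cong\XX_{E'}$ over $\spf(R)$ compatible with the specialization maps. Consequently it restricts to an isomorphism of analytic Milnor fibers $\FF_0(\hat{f}_E)\cong\FF_0(\hat{f}_{E'})$ in the sense of Definition \ref{analyticmilnorfiber}, giving an isomorphism on \'etale cohomology and thus $\chi(E)=\chi(\FF_0(\hat{f}_E))=\chi(\FF_0(\hat{f}_{E'}))=\chi(E')$.

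The main obstacle is the compatibility of the cyclic pairing with the comparison maps at the level of homotopies: one must verify that the \v{C}ech refinement maps and the resolution comparison maps are genuinely cyclic quasi-isomorphisms, so that the induced $\Phi$ is cyclic and not merely an ordinary $L_\infty$-isomorphism. This is precisely what forces $\phi$ to be a symplectomorphism, and hence what makes the two potentials differ by honest precomposition rather than by an uncontrolled transformation; since the invariance of the Milnor fiber relies entirely on $\phi$ being a coordinate change, establishing this cyclic compatibility is the crux of the argument.
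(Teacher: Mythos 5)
Your proposal is correct and takes essentially the same route as the paper's proof: the \v{C}ech models for two covers (or two resolutions) are quasi-isomorphic cyclic dg Lie algebras, the quasi-isomorphism identifies the germ pairs $(\Ext^1(E,E),f_1)$ and $(\Ext^1(E,E),f_2)$ up to a formal change of coordinates, and hence the analytic Milnor fibers at the origin have equal Euler characteristic. The paper compresses the middle step into a one-line appeal to ``the fact of deformation theory,'' so your elaboration --- common refinement, cyclic compatibility of the comparison maps, the formal symplectomorphism with $f'=f\circ\phi$, and the resulting isomorphism of special formal $R$-schemes and of Milnor fibers --- is a faithful filling-in of exactly the argument the paper intends, not a different one.
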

\begin{pf}
Let $\mathcal{U}_{1}$ and $\mathcal{U}_{2}$ be two open covers of the Calabi-Yau
3-fold $X$, and let 
$$L_1=L(\mathcal{U}_{1},E), ~~~~L_2=L(\mathcal{U}_{2},E).$$
Then from the fact about the Cech cohomology, $L_1$ and $L_2$ are 
quasi-isomorphic, and so $H(L_1)=H(L_2)=\Ext^*(E,E)$ as cohomology.

From the fact of deformation theory
the quasi-isomorphism of $L_1$ and $L_2$ gives the isomorphism germ moduli space 
at $E$, i.e. the germ moduli spaces $(\Ext^1(E,E), f_1)$ and $(\Ext^1(E,E), f_2)$ are isomorphic. 
And then from Definition \ref{characteristic} the Euler characteristic of $L_1$ and $L_2$ are the same.
\end{pf}

\sss Let $\MM$ be such  moduli stack of coherent semi-stable objects in the abelian category $\mathcal{A}$. 
These will contain Schur and semi-Schur objects in the derived category.
Then for any object $E\in \mathcal{A}$, the moduli stack locally can given by
$$[MC(L_{E})/G_{E}],$$
where $L_{E}$ is the associated dg Lie algebra of $E$, $MC(L_E)$ is the Maurer-Carten space and 
$G_E$ is the gauge group. 
Suppose that $E$ is a semi-Schur object, then on the cohomology $\Ext^*(E,E)$, there is a cyclic 
$L_\infty$-algebra structure such that the moduli stack is locally isomorphic to 
$$MC(\Ext^*(E,E)),$$
where the Maurer-Cartan space is given by the superpotential function 
$$f_E: \Ext^1(E,E)\to \cc$$
 coming from the $L_\infty$-algebra structure. 
Let 
$$M:=MC(\Ext^*(E,E))=Z(df).$$
\begin{defn}\label{Kai-function}(\cite{JS})
The Behrend function on $\MM$ is defined as 
$$\nu_{\MM}(E):=(-1)^{\dim(\ext^{0}(E,E))}\nu_{M}(0),$$
where $\nu_{M}$ is the original Behrend function of $M$ defined in \cite[Definition 1.4]{Be}.
\end{defn}

\begin{prop}
We have:
$$\nu_{M}(0)=(-1)^{\dim(\ext^1(E,E))}(1-\chi(E)).$$
\end{prop}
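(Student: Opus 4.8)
The plan is to reduce the identity to Behrend's local Milnor-fibre description of his constructible function, and then to convert the topological Milnor fibre that appears there into the \'etale cohomology of the analytic Milnor fibre $\FF_0(\hat f_E)$ by means of the comparison set up in Section \ref{Euler-cyclic}. To fix the geometry: since $E$ is semi-Schur, the transfer theorem (Theorem \ref{linfty-coh}) equips $\Ext^*(E,E)$ with a cyclic $L_\infty$-structure whose potential $f\colon X:=\Ext^1(E,E)\to\cc$ is holomorphic by Behrend--Getzler \cite{BG} (see also \cite{JS}), and the germ moduli space is $M=Z(df)=\Crit(f)$, the critical locus of $f$ on the smooth ambient space $X$ of dimension $d:=\dim\Ext^1(E,E)$.

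The first genuine step is Behrend's formula \cite{Be}. For the critical locus of a function on a smooth space of dimension $d$ it gives, at the origin,
$$\nu_M(0)=(-1)^{d}\bigl(1-\chi(\mathbb F_0)\bigr),$$
where $\mathbb F_0$ denotes the topological Milnor fibre of $f$ at the origin; this is precisely the holomorphic incarnation of the formula already recorded in Section \ref{material}. Because the Behrend function is an intrinsic invariant of the scheme $M$, the same value results from any presentation of the germ of $M$ as a critical locus; in particular it coincides with the value computed from the regular function $g$ of \cite{Joyce} cutting out $\widehat{\MM}_E$, which furnishes an algebraic presentation if one prefers to invoke Behrend's theorem only for regular functions.

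It then remains to match the two Euler characteristics. By Definition \ref{characteristic} one has $\chi(E)=\chi(\FF_0(\hat f_E))$, the Euler characteristic of the \'etale cohomology of the analytic Milnor fibre. The comparison Proposition \ref{comparison}, together with the identification of cohomologies that follows it, equates the cohomology of the topological Milnor fibre with the \'etale cohomology of the analytic Milnor fibre of the special formal scheme $\hat f_E$, so that $\chi(\mathbb F_0)=\chi(\FF_0(\hat f_E))=\chi(E)$. Substituting this into the displayed formula yields
$$\nu_M(0)=(-1)^{\dim\Ext^1(E,E)}\bigl(1-\chi(E)\bigr),$$
which is the assertion.

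The step I expect to be the real obstacle is this last identification. Both Behrend's Milnor-fibre formula and Berkovich's comparison theorem are classically stated for regular functions, whereas the potential $f$ is in general only holomorphic --- strictly convergent in the variables $T_1,\dots,T_m$ and formal in $S_1,\dots,S_n$. Bridging this gap is exactly what forces the extension of the comparison theorem to the complex-analytic topology, for which I would invoke Berkovich's preprint \cite{Ber-future}: it shows that his nearby and vanishing cycle functors attached to $\hat f_E$ agree with the complex-analytic functors $\psi_f$ and $\varphi_f$, so that the Euler characteristic of $\FF_0(\hat f_E)$ coincides with that of the holomorphic Milnor fibre $\mathbb F_0$. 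Once this coincidence is secured, the sign $(-1)^{d}$ and the combination $1-\chi$ drop out formally.
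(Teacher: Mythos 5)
Your proposal is correct and follows essentially the same route as the paper's first proof: Behrend--Getzler holomorphicity of the potential $f$, Berkovich's comparison in \cite{Ber-future} identifying the vanishing cycles of the special formal scheme $\hat{f}_E$ with the complex-analytic ones, and then Behrend's Milnor-fibre formula from \cite[\S 1.2]{Be}. Your parenthetical fallback via the regular function $g$ of \cite{Joyce} is likewise the paper's second proof in outline, which the paper makes precise through the isomorphism of the formal germs $(\XX,\hat{f}_E)\cong(\UU,\hat{g})$ and \cite[Proposition 8.8]{Ni}.
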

\begin{proof}
From Definition \ref{characteristic}, the  Euler characteristic $\chi(E)$ of the semi-Schur object $E$ is the Euler characteristic of the analytic Milnor fiber $\FF_0(\hat{f}_{E})$. 
We use the result of Behrend-Getzler in \cite{BG} that $f$ is holomorphic.  
From the comparison theorem of Berkovich in \cite{Ber-future}, the vanishing cycle functor of the special formal scheme 
$$\hat{f}_{E}: \XX\to \spf(R)$$
associated to the holomorphic function $f_{E}$ is isomorphic to the vanishing cycle functor of $f$ in the complex analytic topology.
Hence
the Euler characteristic $\chi(\FF_0(\hat{f}_{E}))$ is the same as the  Euler characteristic of the Milnor fiber of $f$. From  \cite[\S 1.2]{Be}, the Behrend function at the origin is given by the Euler characteristic of the analytic Milnor fiber in the formula.

We give another proof without using the result of Behrend and Getzler.  Here we use the result of Joyce etc in \cite{Joyce} and \cite{BBBJ} that the moduli stack $\MM$ locally at the point $E\in\MM$ is a form of the quotient stack $[\Crit(g)/G]$, where $g$ is a regular function on a smooth affine scheme 
$U$ with dimension $\dim(\Ext^1(E,E))$, and $G$ is an algebraic reductive group.  Here we can take $U$ as an open neighbourhood of $\Ext^1(E,E)$.  Taking the formal completion of $g$ along the origin we get a formal scheme
$$\hat{g}: \UU\to\spf(R).$$
The germ moduli scheme $\widehat{\MM}$ at the point $E$ is given by 
$\Crit(\hat{g})\subset \UU$. 

On the other hand the germ moduli scheme is given by the critical locus of the  superpotential function $\hat{f}_{E}$, i.e.
$\Crit(\hat{f}_{E})\subset \widehat{\MM}$, since the formal deformation of the point $E$ is controlled by the cyclic $L_\infty$-algebra
$\Ext^*(E,E)$.  These two germ moduli schemes are isomorphic, since they represent the same germ moduli scheme $\widehat{\MM}_{E}$.  Hence the two formal schemes $(\XX, \hat{f}_{E})$ and $(\UU, \hat{g})$ are isomorphic.  From \cite[Proposition 8.8]{Ni}, the analytic Milnor fibers $\FF_{0}(\hat{f}_{E})$ and $\FF_0(\hat{g})$ are isomorphic over $\cc(\!(t)\!)$ and their Euler characteristics are the same.  By the comparison theorem in \cite{Ber4}, the Euler characteristic of $\FF_0(\hat{g})$ is the same as the Euler characteristic of the Milnor fiber of $g$, which gives the value of the Behrend function in the formula of the proposition. 
\end{proof}

\begin{lem}
If $E_1$ and $E_2$ are semi-Schur, then $E_1\oplus E_2$ is also semi-Schur.
\end{lem}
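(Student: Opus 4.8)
The plan is to reduce the statement to the additivity of $\Ext$-groups under direct sums together with the defining orthogonality property of the heart $\mathcal{A}$ of the bounded $t$-structure. Recall that $E_1, E_2 \in \mathcal{A}$, and that being semi-Schur means $\Ext^{<0}(E,E)=0$.

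First I would record the direct-sum decomposition
$$\Ext^i(E_1 \oplus E_2,\, E_1 \oplus E_2) \;\cong\; \bigoplus_{a,b\in\{1,2\}} \Ext^i(E_a, E_b),$$
which follows from the bi-additivity of $\Hom$ in $D^b(\Coh(Y))$ applied to the two summands. It then suffices to show that each of the four summands vanishes for $i<0$.

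The two diagonal terms $\Ext^{<0}(E_1,E_1)$ and $\Ext^{<0}(E_2,E_2)$ vanish directly by the semi-Schur hypothesis on $E_1$ and $E_2$. The point worth isolating is the vanishing of the off-diagonal terms $\Ext^{<0}(E_1,E_2)$ and $\Ext^{<0}(E_2,E_1)$, which is not governed by the individual semi-Schur conditions. For these I would invoke the standard orthogonality of the heart of a $t$-structure: for any $F,G\in\mathcal{A}$ one has $\Ext^i(F,G)=\Hom_{D^b(\Coh(Y))}(F,G[i])=0$ for all $i<0$. Applying this with $(F,G)=(E_1,E_2)$ and $(E_2,E_1)$ disposes of the remaining terms, so every summand vanishes for $i<0$ and $E_1\oplus E_2$ is semi-Schur.

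I expect no genuine obstacle here. In fact the cleanest route is simply to observe that $\mathcal{A}$ is abelian, hence closed under finite direct sums, so $E_1\oplus E_2\in\mathcal{A}$ and is therefore automatically semi-Schur by the property recorded in the text that every object of $\mathcal{A}$ satisfies $\Ext^{<0}(E,E)=0$. The only subtlety worth flagging is that one must not overlook the off-diagonal $\Ext$-groups, whose vanishing rests on the ambient $t$-structure rather than on the hypotheses on $E_1$ and $E_2$ taken separately.
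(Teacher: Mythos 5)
Your proposal is correct, and your closing observation --- that $\mathcal{A}$ is abelian, hence closed under finite direct sums, so $E_1\oplus E_2\in\mathcal{A}$ and is automatically semi-Schur by the property recorded for all objects of the heart --- is precisely the paper's own (one-line) proof. The longer $\Ext$-decomposition argument you lead with is also valid but redundant: the heart orthogonality you invoke for the off-diagonal terms in fact kills all four summands $\Ext^{<0}(E_a,E_b)$ at once, so the individual semi-Schur hypotheses on $E_1$ and $E_2$ are never actually used.
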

\begin{pf}
This is clear since both $E_1$ and $E_2$ are in the abelian category $\mathcal{A}$.
\end{pf}

\sss
\begin{thm}\label{main:Joyce-Song:formula}
The following two Joyce-Song formulas hold for semi-Schur objects.
\begin{enumerate}
\item $$\nu_{\MM}(E_1\oplus E_2)=(-1)^{\chi(E_1,E_2)}\nu_{\MM}(E_1)\nu_{\MM}(E_2)\,.$$
Here, $\chi(E_1,E_2)=\sum_i(-1)^i\dim \Ext^i(E_1,E_2)$ is the Euler form.
\item 
$$\int_{F\in \mathbb{P}(\ext^{1}(E_2,E_1))}\nu_{\MM}(F)d\chi-\int_{F\in \mathbb{P}(\ext^{1}(E_1,E_2))}\nu_{\MM}(F)d\chi$$
$$=(\dim(\Ext^{1}(E_2,E_1))-\dim(\Ext^{1}(E_1,E_2)))\nu_{\MM}(E_1\oplus E_2).$$
\end{enumerate}
\end{thm}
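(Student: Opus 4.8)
The plan is to derive both identities from the machinery already in place: the normalization $\nu_{\MM}(E)=(-1)^{\dim\Ext^0(E,E)}\nu_M(0)$ of Definition~\ref{Kai-function}, the identity $\nu_M(0)=(-1)^{\dim\Ext^1(E,E)}(1-\chi(E))$ established just above it, the Thom--Sebastiani product formula of Section~\ref{Thom}, and the blow-up formula of Proposition~\ref{lemma-key}. Throughout I write the cyclic $L_\infty$-algebra of $E_1\oplus E_2$ as the graded vector space $\Ext^*(E_1\oplus E_2,E_1\oplus E_2)=\bigoplus_{i,j\in\{1,2\}}\Ext^*(E_i,E_j)$. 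For part (1) I would first expand both sides through Definition~\ref{Kai-function} and the Behrend identity, reducing the claim to comparing $(1-\chi(E_1\oplus E_2))$ with $(1-\chi(E_1))(1-\chi(E_2))$ up to a sign. The sign is routine bookkeeping: the diagonal blocks $\Ext^*(E_i,E_i)$ reproduce exactly the exponents $\dim\Ext^0(E_i,E_i)+\dim\Ext^1(E_i,E_i)$ appearing on the right, while the two off-diagonal blocks contribute the extra exponent
$$\dim\Ext^0(E_1,E_2)+\dim\Ext^0(E_2,E_1)+\dim\Ext^1(E_1,E_2)+\dim\Ext^1(E_2,E_1).$$
Serre duality on the Calabi--Yau threefold, $\Ext^i(E_1,E_2)\cong\Ext^{3-i}(E_2,E_1)^{\vee}$, identifies this exponent modulo $2$ with $\chi(E_1,E_2)=\sum_i(-1)^i\dim\Ext^i(E_1,E_2)$, so the prefactor $(-1)^{\chi(E_1,E_2)}$ emerges automatically.

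The genuine content of (1) --- and the step I expect to be the main obstacle --- is that the superpotential $f$ of $E_1\oplus E_2$ lives on all of $\Ext^1(E_1\oplus E_2,E_1\oplus E_2)$, so it is not literally the external sum $f_1+f_2$ on $X_1\oplus X_2$ to which the Thom--Sebastiani theorem of Section~\ref{Thom} applies; it also involves the cross directions $\Ext^1(E_1,E_2)\oplus\Ext^1(E_2,E_1)$. To handle these I would exploit the extra torus of automorphisms: the subgroup $\Gm$ scaling $E_2$ relative to $E_1$ acts with weights $(0,0,+1,-1)$ on $(\Ext^1(E_1,E_1),\Ext^1(E_2,E_2),\Ext^1(E_1,E_2),\Ext^1(E_2,E_1))$, and the intrinsically defined $f$ is invariant under it. Its fixed locus is the diagonal $\Ext^1(E_1,E_1)\oplus\Ext^1(E_2,E_2)$, on which $f$ restricts to $f_1+f_2$, since composition annihilates any product mixing $\mathrm{End}(E_1)$ and $\mathrm{End}(E_2)$. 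The key lemma to prove is a dimensional-reduction statement: the \'etale Milnor fiber of such a $\Gm$-invariant formal superpotential has the same Euler characteristic as that of its restriction to the fixed locus. Granting it, $\chi(E_1\oplus E_2)=\chi(\FF_0(f_1+f_2))$, and the Thom--Sebastiani formula gives $(1-\chi(E_1\oplus E_2))=(1-\chi(E_1))(1-\chi(E_2))$, completing (1).

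For part (2), I would feed the formal superpotential scheme $\XX=\XX_{E_1\oplus E_2}$ into Proposition~\ref{lemma-key} twice. Blowing up along the sublocus $\ZZ\subset\XX$ on which the $\Ext^1(E_2,E_1)$-coordinates vanish produces the exceptional projective space $\pp(T_y\XX/T_y\ZZ)=\pp(\Ext^1(E_2,E_1))$, whose points are precisely the nonsplit extensions $F$ of $E_2$ by $E_1$, so that $\int_{w}\chi(\FF_{w}(\tilde f))\,d\chi$ becomes the weighted integral of $\chi(\FF(F))$ over $\pp(\Ext^1(E_2,E_1))$; the symmetric choice of $\ZZ$ yields $\pp(\Ext^1(E_1,E_2))$. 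Running the formula once for each cross direction and subtracting, the common terms $\chi(\FF_y(f))$ cancel and the residual $(\dim\XX-\dim\ZZ-1)$-coefficients collapse to $\dim\Ext^1(E_2,E_1)-\dim\Ext^1(E_1,E_2)$. It then remains to translate the Euler characteristics of analytic Milnor fibers back into Behrend-function values via Definition~\ref{Kai-function} and the Behrend identity, using part (1) to evaluate the split locus and the fact that the relevant $\Ext^0$- and $\Ext^1$-dimensions are constant on a dense open subset of each projective space. The delicate point in (2) is exactly this conversion: keeping the normalizing signs $(-1)^{\dim\Ext^0(F,F)}$ consistent as $F$ varies, so that the weighted integrals of $\nu_{\MM}$ and of $\chi$ differ only by a controlled global sign, and confirming through the dimensional reduction that the restrictions $f|_{\ZZ}$ contribute the value attached to the split object $E_1\oplus E_2$.
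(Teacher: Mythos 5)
Your part (1) is essentially the paper's own proof. It performs the same reduction through Definition~\ref{Kai-function} and the identity $\nu_{M}(0)=(-1)^{\dim\Ext^1(E,E)}(1-\chi(E))$, the same Serre-duality parity count converting the off-diagonal exponent $\dim\Ext^0(E_1,E_2)+\dim\Ext^0(E_2,E_1)+\dim\Ext^1(E_1,E_2)+\dim\Ext^1(E_2,E_1)$ into $(-1)^{\chi(E_1,E_2)}$, and the same torus trick: the paper uses $T=\{\mathrm{id}_{E_1}+\gamma\,\mathrm{id}_{E_2}:\gamma\in U(1)\}$ acting by conjugation, with fixed locus $\Ext^1(E_1,E_1)\oplus\Ext^1(E_2,E_2)$ on which $f_E$ restricts to $f_{E_1}+f_{E_2}$, followed by Thom--Sebastiani. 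The localization statement you isolate as the ``key lemma to prove'' is exactly the step the paper dispatches with ``it is easy to see that $\chi(\FF_{f_E}(P))=\chi(\FF_{f_E}(P)^{T})$'', so your caution about it is warranted, but your route is the paper's route.

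Part (2), however, has a genuine gap: you anchor the blow-up at the wrong point. You blow up $\XX$ along $\ZZ=\{\epsilon_{21}\text{-coordinates}=0\}$ and apply Proposition~\ref{lemma-key} at the origin $y=0$, then assert that the exceptional integral $\int_{w\in\pp(\Ext^1(E_2,E_1))}\chi(\FF_w(\tilde{f}))\,d\chi$ ``becomes'' the weighted integral of $\chi(\FF(F))$ over the extensions $F$. That requires $\chi(\FF_w(\tilde{f}))=\chi(\FF_{f}(0,0,0,\epsilon_{21}))$, i.e.\ that the Milnor fiber of the \emph{blown-up} function at an exceptional point equals the Milnor fiber of $f$ at the honest extension point, and this is false in general. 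Already for the $\Gm$-invariant germ $f=x^2(y_1+y_2)^2$, with $x$ an $\Ext^1(E_1,E_2)$-coordinate and $(y_1,y_2)$ the $\Ext^1(E_2,E_1)$-coordinates: at $(x,y)=(0,(1,0))$ one has $f=x^2\cdot(\text{unit})$, so $\chi=2$, while in the chart $y_1=u$, $y_2=ut$ the blow-up gives $\tilde{f}=x^2u^2(1+t)^2$, whose Milnor fiber at the corresponding exceptional point is that of $(xu)^2$ times a disk, so $\chi=0$. With your anchoring, moreover, the terms that cancel upon subtraction are the two copies of $\chi(\FF_0(f))$, leaving integrals of exceptional-fiber data that cannot be converted into Behrend-function values.

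The paper runs Proposition~\ref{lemma-key} at the extension points themselves: it restricts to $U=\{\epsilon_{21}\neq0\}$, blows up along $V=\{\epsilon_{12}=0\}\cap U$, and applies the formula at $y=(0,0,0,\epsilon_{21})$, so that $\chi(\FF_{y}(f))$ --- exactly the quantity computing $\nu_{\MM}(F)$ --- appears as a term of the blow-up identity, with exceptional space $\pp(\Ext^1(E_1,E_2))$. The restriction term is handled just as you propose for the fixed locus: invariance forces $f|_{V}=f_{E_1}+f_{E_2}$ with no $\epsilon_{21}$-dependence, and Thom--Sebastiani against the zero function gives $\chi(\FF_{f|_V})=\chi(\FF_{f|_{\Ext^1(E_1,E_1)\oplus\Ext^1(E_2,E_2)}}(0))$. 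Integrating over $[\epsilon_{21}]\in\pp(\Ext^1(E_2,E_1))$, doing the symmetric computation on $U'=\{\epsilon_{12}\neq0\}$, and subtracting, it is the two double integrals $\int_{\pp\times\pp}\chi(\FF_{\tilde{f}})\,d\chi$ that cancel, and the coefficients combine as $a(1-b)-b(1-a)=a-b$ with $a=\dim\Ext^1(E_2,E_1)$ and $b=\dim\Ext^1(E_1,E_2)$. So the blow-up formula is the right tool, but it must be centered at the nonsplit extensions, not at the origin; as set up, your part (2) does not go through.
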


\sss \textbf{Proof of formula (1) in Theorem \ref{main:Joyce-Song:formula}:}

Let $E=E_1\oplus E_2$ be an derived semi-Schur object in the derived category of coherent sheaves on the Calabi-Yau threefold $Y$.

Let $R\Hom(E,E)$, $R\Hom(E_1,E_1)$ and $R\Hom(E_2,E_2)$ be the corresponding dg Lie algebras. Then on the cohomology $\Ext(E,E)$, $\Ext(E_1,E_1)$ and $\Ext(E_2,E_2)$ there exist cyclic $L_\infty$-algebras, which we denote them by $L_E$, $L_{E_1}$ and $L_{E_2}$.

Let $X_E:=\Ext^{1}(E,E)$, $X_{E_1}:=\Ext^{1}(E_1,E_1)$ and $X_{E_2}:=\Ext^{1}(E_2,E_2)$.  
Let $X_{E_1,E_2}:=\Ext^{1}(E_1,E_2)$ and $X_{E_2,E_1}:=\Ext^{1}(E_2,E_1)$.
Then from the definition of $\Ext^{1}(E,E)$, 
$$X_{E}=X_{E_1}\oplus X_{E_2}\oplus X_{E_1,E_2}\oplus X_{E_2,E_1}.$$
Let $S=\{x,y\}, T=\{z, w\}$ be the coordinates of $X_{E_1}$ and $X_{E_2}$;  $X_{E_1,E_2}$ and $X_{E_2,E_1}$ respectively.  From the definition of superpotential function determined by the higher products of the cyclic $L_\infty$-algebra, the potential function $f_{E}$ has the following decomposition:
$$f_E=f_{E_1}+f_{E_2}+(\mbox{mixed terms of} ~x, y ,z,w),$$
where $f_{E_1}(x)$ and $f_{E_2}(y)$ are the potential functions for the $L_\infty$-algebras $L_{E_1}$ and 
$L_{E_2}$.
This superpotential function $f_{E}$ belongs to the $R$-algebra $R\{T\}[[S]]$, since when putting $S=0$, then $f=0\in R\{T\}$. 

\begin{prop}
We have
$$(1-\chi(E))=(1-\chi(E_1))\cdot (1-\chi(E_2)).$$
\end{prop}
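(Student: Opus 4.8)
The plan is to reduce to the Thom--Sebastiani formula of Section \ref{Thom}, the only genuinely new feature being the off-diagonal coordinates $z,w$ (from $\Ext^1(E_1,E_2)$ and $\Ext^1(E_2,E_1)$) together with the mixed terms they produce. Write $f_E=h+g$ with $h:=f_{E_1}(x)+f_{E_2}(y)$. The structural input, used implicitly above, is that every monomial of the mixed part $g$ is divisible by one of $z,w$ and, because $f_E|_{S=0}=0$, also by one of $x,y$; both facts follow from the block-matrix grading of the cyclic $L_\infty$-products on $\Ext^*(E_1\oplus E_2,E_1\oplus E_2)$, since a higher product with at most one off-diagonal factor lands in an off-diagonal block and pairs trivially under $\kappa$ against the remaining diagonal slot, so that $g$ in fact has $(z,w)$-degree at least two and $g(x,y,0,0)=g(0,0,z,w)=0$. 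In particular $f_E(x,y,0,0)=h$, and applying the formula of Section \ref{Thom} to the block-diagonal algebra $L_{E_1}\oplus L_{E_2}$ yields $(1-\chi(\FF_0(h)))=(1-\chi(E_1))\cdot(1-\chi(E_2))$, where $\FF_0(h)$ is taken on $X_{E_1}\oplus X_{E_2}$.

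It then suffices to prove that the mixed terms do not change the Euler characteristic, namely $\chi(\FF_0(f_E))=\chi(\FF_0(h))$; here $\FF_0(h)$ may be computed on the full space $X_E$, where $h$ is independent of $z,w$ and the extra $(z,w)$-directions form a contractible factor, so this value agrees with the one on $X_{E_1}\oplus X_{E_2}$. To access the identity I would pass to the complex-analytic picture: by Behrend--Getzler \cite{BG} the potential $f_E$ is holomorphic, and by the comparison theorems \cite{Ber4}, \cite{Ber-future} (see Proposition \ref{comparison}) the \'etale Euler characteristic of $\FF_0$ equals the topological Euler characteristic of the Milnor fibre of the germ. I would then study the scaling family $f_{(\lambda)}:=h+g(x,y,\lambda z,\lambda w)$, $\lambda\in\cc$. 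For $\lambda\neq0$ the substitution $(z,w)\mapsto(z/\lambda,w/\lambda)$ carries $f_{(\lambda)}$ to $f_E$ and, since $|\lambda|=1$ on $\cc^{*}$, is an automorphism of the unit polydisc, so $\FF_0(f_{(\lambda)})\cong\FF_0(f_E)$; while $f_{(0)}=h$. Thus the whole assertion collapses to the statement that the Milnor-fibre Euler characteristic has no jump as $\lambda\to0$.

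The main obstacle is exactly this no-jump statement. The germs here are in general non-isolated singularities, $f_E$ is not quasi-homogeneous, and $g$ carries no pure quadratic form in $(z,w)$, so the mixed part is not a Morse--Bott suspension and the classical suspension isomorphism does not apply; one must instead rule out vanishing cohomology along the $\lambda$-direction. I expect the cleanest route to be a relative vanishing-cycle argument for the family $f_{(\lambda)}$ over the $\lambda$-line: using that every monomial of $g$ is divisible by one of $z,w$ and by one of $x,y$, one shows that the relative critical locus acquires no new components over $\lambda=0$, so the fibrewise Euler characteristic is preserved. Alternatively one may argue at the motivic level, where the $\Gm$-action rescaling $(z,w)$ and the specialization $\lambda\to0$ are governed by the motivic Thom--Sebastiani theorem recalled in Section \ref{motivic} and by Le's theorem \cite{Thuong2}, and then pass to Euler-characteristic realizations. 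Establishing this constancy uniformly over the Berkovich generic fibre is the delicate point; granting it, the two displays combine to give $(1-\chi(E))=(1-\chi(E_1))\cdot(1-\chi(E_2))$.
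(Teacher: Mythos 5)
Your reduction is the right one --- splitting $f_E=h+g$ with $h=f_{E_1}+f_{E_2}$, handling $h$ by the Thom--Sebastiani theorem of Section \ref{Thom}, and isolating the single remaining claim $\chi(\FF_0(f_E))=\chi(\FF_0(h))$ --- and this matches the skeleton of the paper's argument. But the step you leave open is not a technicality; it is the entire mathematical content of the proposition, and the deformation route you sketch does not close it. The Euler characteristic of Milnor fibres is \emph{not} constant in arbitrary families degenerating as $\lambda\to 0$ (already for isolated singularities the Milnor number is only upper semicontinuous and genuinely jumps), so ``the relative critical locus acquires no new components over $\lambda=0$'' is an unproved assertion that needs exactly the structure you have not yet extracted; likewise the motivic Thom--Sebastiani theorem and Le's integral identity do not apply directly, since $g$ is not a sum of functions in separated variables. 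Your proposal explicitly says ``granting it,'' so as written it is a conditional argument with a genuine gap at its only nontrivial point.

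The missing idea in the paper is a symmetry, not a deformation: the group $T=\{\mathrm{id}_{E_1}+\gamma\,\mathrm{id}_{E_2}:\gamma\in U(1)\}$ acts on $\Ext^1(E,E)$ by conjugation, with the diagonal blocks fixed and with $\Ext^1(E_1,E_2)$, $\Ext^1(E_2,E_1)$ of weights $+1$, $-1$. Since conjugation by an automorphism of $E$ preserves the cyclic $L_\infty$-structure, $f_E$ is $T$-invariant --- note this is \emph{stronger} than your observation that mixed monomials are divisible by one of $z,w$: invariance forces each mixed monomial to contain equal numbers of $z$-type and $w$-type factors. Consequently the analytic Milnor fibre $\FF_{f_E}(P)$ inherits a $T$-action, and localization of the Euler characteristic, $\chi(\FF_{f_E}(P))=\chi(\FF_{f_E}(P)^{T})$, kills the off-diagonal directions in one stroke: the fixed locus is the Milnor fibre of $f_E|_{\ext^1(E_1,E_1)\oplus\ext^1(E_2,E_2)}=f_{E_1}+f_{E_2}$, and Thom--Sebastiani finishes. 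Your scaling family $f_{(\lambda)}$ is morally the flow of this action (the weight structure is what makes $\lim_{\lambda\to 0}f_{(\lambda)}=h$ exist), so your argument can be repaired, but only by invoking the equivariance and the fixed-point theorem rather than a no-jump claim for a general one-parameter family.
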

\begin{pf}
Let $P\in X_{E}$, $P_1\in X_{E_1}$ and $P_2\in X_{E_2}$ be the origins. 
We have 
$$\chi(E)=\chi(\FF_{f_{E}}(P))$$ 
$$\chi(E_1)=\chi(\FF_{f_{E_{1}}}(P_1)),$$
and
$$\chi(E_2)=\chi(\FF_{f_{E_{2}}}(P_2)),$$
where $\FF_{f_E}(P), \FF_{f_{E_{1}}}(P_1), \FF_{f_{E_{2}}}(P_2)$ be the analytic Milnor fibers of the associated superpotential function.

Let $T:=\{id_{E_1}+\gamma id_{E_2}: \gamma\in U(1)\}$
acts on $\Ext^{1}(E,E)$ by 
$$\gamma(s)=\gamma\circ s\circ\gamma^{-1}$$
for $\gamma\in T$. Then the fixed points are 
$$\Ext^1(E,E)^{T}=\Ext^1(E_1,E_1)\oplus\Ext^1(E_2,E_2).$$
It is easy to see that 
 $\chi(\FF_{f_{E}}(P))=\chi(\FF_{f_{E}}(P)^{T})$. Let $P^{T}=(P_1,P_2)\in \Ext^1(E_1,E_1)\oplus\Ext^1(E_2,E_2)$
 be the origin. The potential function $f_{E}|_{\ext^1(E_1,E_1)\oplus\ext^1(E_2,E_2)}=f_{E_1}+f_{E_2}$.  Then 
 \begin{align*}
(1- \chi(E))&=\left(1-\chi(\FF_{f_{E}}(P))\right)\\
 &=\left(1-\chi(\FF_{f_{E}|_{\ext^1(E_1,E_1)\oplus\ext^1(E_2,E_2)}}(P))\right) \\
 &=\left(1- \chi(\FF_{f_{E_{1}}}(P_1))\right)\cdot
\left(1-\chi(\FF_{f_{E_{2}}}(P_2))\right) \\
&=(1-\chi(E_1))\cdot(1-\chi(E_2)).
 \end{align*}
\end{pf}

From the definition of Behrend functions 
$$
\begin{array}{l}
\nu_{\MM}(E)=(-1)^{\dim(\ext^0(E,E))+\dim(\ext^1(E,E))}(1-\chi(L_{E})) \\
\nu_{\MM}(E_1)=(-1)^{\dim(\ext^0(E_1,E_1))+\dim(\ext^1(E_1,E_1))}(1-\chi(L_{E_{1}})) \\
\nu_{\MM}(E_2)=(-1)^{\dim(\ext^0(E_2,E_2))+\dim(\ext^1(E_2,E_2))}(1-\chi(L_{E_{2}})) 
\end{array}
$$

Then we compute
\begin{align*}
\nu_{\MM}(E)&=(-1)^{\dim(\ext^0(E,E))+\dim(\ext^1(E,E))}(1-\chi(L_{E}))\\
&=(-1)^{\dim(\ext^0(E_1,E_1))+\dim(\ext^0(E_2,E_2))+\dim(\ext^0(E_1,E_2))+\dim(\ext^0(E_2,E_1))}\cdot \\
&\quad (-1)^{\dim(\ext^1(E_1,E_1))+\dim(\ext^1(E_2,E_2))+\dim(\ext^1(E_1,E_2))+\dim(\ext^1(E_2,E_1))}\\
&\cdot(1-\chi(L_{E_1}))\cdot(1-\chi(L_{E_2})) \\
&=(-1)^{\dim(\ext^0(E_1,E_2))+\dim(\ext^0(E_2,E_1))+\ext^1(E_1,E_2))+\dim(\ext^1(E_2,E_1))}\cdot \\
&\quad (-1)^{\dim(\ext^0(E_1,E_1))+\dim(\ext^1(E_1,E_1))}(1-\chi(L_{E_1}))\cdot \\
&(-1)^{\dim(\ext^0(E_2,E_2))+\ext^1(E_2,E_2))}(1-\chi(L_{E_2})) \\
&=(-1)^{\dim(\ext^0(E_1,E_2))+\dim(\ext^0(E_2,E_1))+\ext^1(E_1,E_2))+\dim(\ext^1(E_2,E_1))} \cdot \nu_{\MM}(E_1)\cdot\nu_{\MM}(E_2) \\
&=(-1)^{\chi(E_1,E_2)}\cdot \nu_{\MM}(E_1)\cdot\nu_{\MM}(E_2).
\end{align*}
This completes the proof of formula (1).
\sss \textbf{Proof of formula (2) in Theorem \ref{main:Joyce-Song:formula}:}

Let 
$$\Ext^1(E,E)=\Ext^1(E_1,E_1)\oplus\Ext^1(E_2,E_2)\oplus\Ext^1(E_1,E_2)\oplus\Ext^1(E_2,E_1).$$
Then 
$[\epsilon_{21}]\in \pp(\Ext^1(E_2,E_1))$ represents an element 
$(0,0,0,\epsilon_{21})$ in $\Ext^1(E,E)$, and  $[\epsilon_{12}]\in \pp(\Ext^1(E_1,E_2))$ represents an element 
$(0,0,\epsilon_{12},0)$ in $\Ext^1(E,E)$.  Then the formula (2) is equivalent to the following formula
$$
\int_{F\in \mathbb{P}(\mbox{\ext}^{1}(E_2,E_1))}(1-\chi(\FF_{f}(0,0,0,\epsilon_{21}))d\chi-\int_{F\in \mathbb{P}(\ext^{1}(E_1,E_2))}(1-\chi(\FF_{f}(0,0,\epsilon_{12},0))d\chi$$
$$=(\dim(\Ext^{1}(E_2,E_1))-\dim(\Ext^{1}(E_1,E_2)))(1-\chi(\FF_{f|_{\ext^1(E_1,E_1)\oplus\ext^1(E_2,E_2)}}(0)).$$
In turn it is equivalent to the following formula:
\begin{align}\label{formula2:equivalence:formula}
&\int_{F\in \mathbb{P}(\mbox{\ext}^{1}(E_2,E_1))}\chi(\FF_{f}(0,0,0,\epsilon_{21})d\chi-\int_{F\in \mathbb{P}(\ext^{1}(E_1,E_2))}\chi(\FF_{f}(0,0,\epsilon_{12},0)d\chi  \\
&=(\dim(\Ext^{1}(E_2,E_1))-\dim(\Ext^{1}(E_1,E_2)))\cdot \chi(\FF_{f|_{\ext^1(E_1,E_1)\oplus\ext^1(E_2,E_2)}}(0). \nonumber 
\end{align}

We prove the formula in (\ref{formula2:equivalence:formula}).
Let 
$$U:=\{(\epsilon_{11},\epsilon_{22},\epsilon_{12},\epsilon_{21})\in \Ext^1(E,E): \epsilon_{21}\neq 0\}$$
and 
$$V:=\{(\epsilon_{11},\epsilon_{22},\epsilon_{12},\epsilon_{21})\in U: \epsilon_{12}= 0\}.$$
We prove the following formulas 
\begin{align}\label{formula-1}
&\chi(\FF_{f}(0,0,0,\epsilon_{21})= \\
& \int_{[\epsilon_{12}]\in \pp(\ext^1(E_1,E_2))}\chi(\FF_{\tilde{f}}(0,0,[\epsilon_{12}],0,\epsilon_{21}))d\chi+(1-\dim(\Ext^1(E_1,E_2)))\chi(\FF_{f|_{V}}(0,0,0,\epsilon_{21})). \nonumber
\end{align}

and 
\begin{align}\label{formula-2}
\chi(\FF_{f|_{V}}(0,0,0,\epsilon_{21}))=\chi(\FF_{f|_{\ext^1(E_1,E_1)\oplus\ext^1(E_2,E_2)}}(0,0,0,0)).
\end{align}

The formula (\ref{formula-1}) is from Lemma \ref{lemma-key}, by applying the  formula to the formal blow-up 
of the formal $R$-scheme 
$$\hat{f}: \XX|_{U}:=\widehat{\Ext^1(E,E)|_{U}}\to\spf(R)$$
along the formal subscheme $\widehat{V}\subset \XX|_{U}$. 
Then evaluating the formula at the point $v\in \Crit(f)$, we get the formula (\ref{formula-1}).

To prove formula (\ref{formula-2}),  note that 
$$V:=\{(\epsilon_{11},\epsilon_{22},\epsilon_{12},\epsilon_{21})\in 
\Ext^1(E,E): \epsilon_{12}=0\}.$$
So we have 
$$V=\Ext^1(E_1,E_1)\oplus\Ext^1(E_2,E_2)\oplus W,$$ 
where
$W=\{(\epsilon_{11},\epsilon_{22},\epsilon_{12},\epsilon_{21})\in \Ext^1(E,E):
\epsilon_{11}=\epsilon_{22}=\epsilon_{12}= 0,\epsilon_{21}\neq 0\}$.
From the potential function $f_{E}: \Ext^1(E,E)\to\cc$, the restriction $f|_{V}$ is given by
$f|_{V}=f_{E_1}+f_{E_2}+0$. 
Hence from the Thom-Sebastiani theorem for Euler characteristic of cyclic L-infinity algebras
we have 
\begin{align*}
\chi(\FF_{f|_{V}}((0,0,0,\epsilon_{21})))
&=\chi(\FF_{f|_{\ext^1(E_1,E_1)\oplus\ext^1(E_2,E_2)}}((0,0,0,0)))\cdot\chi(\FF_{0}((0,0,0,\epsilon_{21}))) \\
&=\chi(\FF_{f|_{\ext^1(E_1,E_1)\oplus\ext^1(E_2,E_2)}}((0,0,0,0))),
\end{align*}
since $\chi(\FF_{0}((0,0,0,\epsilon_{21})))=1$.

Similarly let
$$U^\prime:=\{(\epsilon_{11},\epsilon_{22},\epsilon_{12},\epsilon_{21})\in \Ext^1(E,E): \epsilon_{12}\neq 0\}$$
and 
$$V^\prime:=\{(\epsilon_{11},\epsilon_{22},\epsilon_{12},\epsilon_{21})\in U^\prime: \epsilon_{21}= 0\}.$$
Then we have
\begin{align}\label{formula-11}
&\chi(\FF_{f}(0,0,\epsilon_{12},0)= \\
&\int_{[\epsilon_{21}]\in \pp(\ext^1(E_2,E_1))}\chi(\FF_{\tilde{f}}(0,0,\epsilon_{12},0,[\epsilon_{21}]))d\chi+(1-\dim(\Ext^1(E_2,E_1)))\chi(\FF_{f|_{V}}(0,0,\epsilon_{12},0)). \nonumber
\end{align}
and
\begin{align}\label{formula-22}
\chi(\FF_{f|_{V}}(0,0,\epsilon_{12},0))=\chi(\FF_{f|_{\ext^1(E_1,E_1)\oplus\ext^1(E_2,E_2)}}(0,0,0,0)).
\end{align}

From formulas (\ref{formula-1}) and  (\ref{formula-11}),  taking integral yields
\begin{align}\label{formula-111}
&\int_{\epsilon_{21}\in \pp(\ext^1(E_2,E_1))}\chi(\FF_{f}(0,0,0,\epsilon_{21})d\chi=\\ 
&\int_{[\epsilon_{12}]\in \pp(\ext^1(E_1,E_2)),
[\epsilon_{21}]\in \pp(\ext^1(E_2,E_1))}\chi(\FF_{\tilde{f}}(0,0,[\epsilon_{12}],0,[\epsilon_{21}]))d\chi \nonumber \\
&+\dim(\Ext^1(E_2,E_1))(1-\dim(\Ext^1(E_1,E_2)))\chi(\FF_{f|_{V}}(0,0,0,\epsilon_{21})). \nonumber
\end{align}
and 
\begin{align}\label{formula-222}
&\int_{\epsilon_{12}\in \pp(\ext^1(E_1,E_2))}\chi(\FF_{f}(0,0,\epsilon_{21},0)d\chi=\\ 
&\int_{[\epsilon_{21}]\in \pp(\ext^1(E_2,E_1)),
[\epsilon_{12}]\in \pp(\ext^1(E_1,E_2))}\chi(\FF_{\tilde{f}}(0,0,[\epsilon_{12}],0,[\epsilon_{21}]))d\chi \nonumber \\
&+\dim(\Ext^1(E_1,E_2))(1-\dim(\Ext^1(E_2,E_1)))\chi(\FF_{f|_{V}}(0,0,\epsilon_{12},0)). \nonumber
\end{align}
Then if we   minus formula (\ref{formula-222}) from  formula (\ref{formula-111}), the formula 
(\ref{formula2:equivalence:formula}) is proved.

\sss \textbf{Application to stable pairs:}

Let $Y$ be a Calabi-Yau threefold.  A stable pair is given by data $I=[\mathcal{O}_Y\stackrel{s}{\rightarrow} F]$, where $F$ is a pure dimension one coherent sheaf over $Y$ and the cokernel of $s$ is supported in dimension zero.  Fix data $(\beta,n)$ as in \cite{PT}, denote by $P_n(Y,\beta)$ the moduli scheme of stable pairs, the Behrend function associate to it is denoted by  $\nu_P$. The coherent sheaf $F$ may not be stable except that $\beta$ is irreducible, but there is a moduli stack $\MM$ of them. We denote $\nu_{\MM}$ the corresponding Behrend function on it. 

\begin{prop}
$$\nu_{P}=(-1)^{\chi(F)}\nu_{\MM}.$$
\end{prop}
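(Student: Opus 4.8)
The plan is to present $I=[\mathcal{O}_Y\stackrel{s}{\rightarrow}F]$ as a semi-Schur object of $D^b(\Coh(Y))$ and to compare its local model at $[I]\in P_n(Y,\beta)$ with the local model of the sheaf $F$ at $[F]\in\MM$ by means of the Thom--Sebastiani theorem of Section~\ref{Thom}. Since a stable pair is simple one has $\Hom(I,I)=\cc$ and $\Ext^{<0}(I,I)=0$, so $[I]$ is a point of the moduli \emph{scheme} $P_n(Y,\beta)$ and the formula for the Behrend function preceding this proposition gives $\nu_P(I)=(-1)^{\dim\Ext^1(I,I)}\bigl(1-\chi(\FF_0(\hat f_I))\bigr)$, whereas the stack normalization of Definition~\ref{Kai-function} gives $\nu_{\MM}(F)=(-1)^{\dim\Hom(F,F)+\dim\Ext^1(F,F)}\bigl(1-\chi(\FF_0(\hat f_F))\bigr)$. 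It therefore suffices to compare the two analytic Milnor fibres $\FF_0(\hat f_I)$ and $\FF_0(\hat f_F)$ and then to match the parity of the two sign exponents.

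First I would compute $\Ext^\ast(I,I)$ in terms of $\Ext^\ast(F,F)$ from the distinguished triangle $I\to\mathcal{O}_Y\stackrel{s}{\rightarrow}F\to I[1]$. Applying $R\Hom(-,I)$, $R\Hom(\mathcal{O}_Y,-)$ and $R\Hom(F,-)$ to this triangle and using that $F$ is pure of dimension one (so $H^{>1}(Y,F)=0$), that $\mathcal{O}_Y$ is rigid on the Calabi--Yau threefold ($\Ext^1(\mathcal{O}_Y,\mathcal{O}_Y)=0$), and Serre duality $\Ext^k(F,\mathcal{O}_Y)\cong H^{3-k}(Y,F)^\vee$, the only directions of $\Ext^1(I,I)$ beyond those of $\Ext^1(F,F)$ come from the section deformations $\Hom(\mathcal{O}_Y,F)=H^0(Y,F)$, whose Serre-dual obstructions lie in $H^1(Y,F)$. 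This produces a splitting $\Ext^1(I,I)=\Ext^1(F,F)\oplus W$ in which the cyclic pairing restricts to a perfect pairing between the $H^0(Y,F)$- and $H^1(Y,F)$-parts of $W$.

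Next I would transfer the cyclic $L_\infty$-structure to $\Ext^\ast(I,I)$ and read off the superpotential $f_I$ in these coordinates. Because the cyclic form pairs the section directions in $\Ext^1(I,I)$ with the obstruction directions in $\Ext^2(I,I)$ nondegenerately, along $W$ the potential is a nondegenerate quadratic form $q$, and after a formal change of coordinates $f_I=f_F+q$. The Thom--Sebastiani theorem of Section~\ref{Thom} then yields $\bigl(1-\chi(\FF_0(\hat f_I))\bigr)=\bigl(1-\chi(\FF_0(\hat f_F))\bigr)\cdot\bigl(1-\chi(\FF_0(q))\bigr)$, and the analytic Milnor fibre of a nondegenerate quadratic form in $d$ variables satisfies $1-\chi(\FF_0(q))=(-1)^{d}$, as in the classical Milnor-number product formula recalled in the introduction.

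Finally I would assemble the signs. Writing $\nu_P(I)/\nu_{\MM}(F)=(-1)^{\dim\Ext^1(I,I)+d-\dim\Hom(F,F)-\dim\Ext^1(F,F)}$ and substituting the dimension count of the second step, the contribution of $\Ext^1(F,F)$ cancels and the pairing of $H^0(Y,F)$ against $H^1(Y,F)$ forces the surviving exponent to be congruent mod $2$ to $h^0(Y,F)-h^1(Y,F)=\chi(\mathcal{O}_Y,F)=\chi(F)$; this is precisely the Euler-form sign that formula~(1) of Theorem~\ref{main:Joyce-Song:formula} predicts for the constituents $\mathcal{O}_Y$ and $F$, and gives $\nu_P=(-1)^{\chi(F)}\nu_{\MM}$. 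The hard part will be exactly this bookkeeping: verifying that the section/obstruction block really enters $f_I$ as a nondegenerate quadratic factor (so that Thom--Sebastiani applies and its Milnor fibre has the expected parity), and checking that all Serre-duality and stack-versus-scheme normalization signs conspire to leave the exponent equal to $\chi(F)$ rather than $\chi(F)$ plus an uncontrolled constant.
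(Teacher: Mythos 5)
Your strategy---a pointwise Thom--Sebastiani comparison of the local models at $[I]\in P_n(Y,\beta)$ and $[F]\in\MM$---is not the paper's route, and it has a genuine gap at its central step. First, the claimed splitting $\Ext^1(I,I)=\Ext^1(F,F)\oplus W$ with $W$ built from $H^0(Y,F)$ and $H^1(Y,F)^\vee$ is not what the long exact sequences of the triangle $I\to\mathcal{O}_Y\to F$ actually produce: the section directions enter only through $H^0(Y,F)/\cc\cdot s$, and $\Ext^1(F,F)$ contributes only the kernel of a nontrivial connecting map to $H^1(Y,F)$, so in general there is no direct-sum decomposition, let alone one on which the cyclic pairing restricts perfectly to $W$. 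Second, and more fatally, the identity $f_I=f_F+q$ with $q$ a nondegenerate quadratic form cannot hold in the framework of this paper: the superpotential is defined on the cohomology $\Ext^1(I,I)$ via the transferred (minimal) cyclic $L_\infty$-structure, for which $\mu_1=0$, so $f_I$ begins in cubic order and has no quadratic part whatsoever. A nondegenerate quadratic summand corresponds precisely to directions that are cancelled in passing to the minimal model, i.e.\ directions that do not survive into $\Ext^1(I,I)$ at all. Hence the Thom--Sebastiani factor $(-1)^{d}$ you invoke never materializes, and the sign bookkeeping of your final step has nothing to attach to. Relatedly, the comparison you aim for cannot be purely pointwise: the forgetful correspondence from pairs to sheaves has positive-dimensional fibers inside $\mathbb{P}(H^0(Y,F))$, and this is exactly why the Behrend-function comparison must be an averaged statement.

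The paper instead deduces the proposition formally from the two Behrend identities of Theorem~\ref{main:Joyce-Song:formula}, with no local-model computation. Formula (1), applied to the direct sum $F[-1]\oplus\mathcal{O}_Y$, gives $\nu_{\MM}(F[-1]\oplus\mathcal{O}_Y)=(-1)^{\chi(F)-1}\nu_{\MM}(F[-1])\,\nu_{\MM}(\mathcal{O}_Y)=(-1)^{\chi(F)}\nu_{\MM}(F)$, using $\chi(F[-1],\mathcal{O}_Y)=(-1)^{\chi(F)-1}$, $\nu_{\MM}(\mathcal{O}_Y)=1$ (sphericality of $\mathcal{O}_Y$) and $\nu_{\MM}(F[-1])=-\nu_{\MM}(F)$. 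Then, since a stable pair $I$ is up to isomorphism the unique nontrivial extension in the triangle $F[-1]\to I\to\mathcal{O}_Y\to F$, formula (2)---the integral identity over $\mathbb{P}\bigl(\Ext^1(\mathcal{O}_Y,F[-1])\bigr)=\mathbb{P}\bigl(H^0(Y,F)\bigr)$ and $\mathbb{P}\bigl(\Ext^1(F[-1],\mathcal{O}_Y)\bigr)=\mathbb{P}\bigl(H^1(Y,F)^\vee\bigr)$---yields $\nu_P(I)=\nu_{\MM}(F[-1]\oplus\mathcal{O}_Y)$, and the two displays combine to give the proposition. If you tried to repair your argument you would in effect be re-proving formula (2); its whole content is that the relation between $\nu_P$ and $\nu_{\MM}$ arises from integrating over the projective spaces of extensions, not from a quadratic splitting of a single superpotential.
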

\begin{pf}
There is a distinguished triangle $F[-1]\to I\to \mathcal{O}_{Y}\to F$.
Consider the direct sum object $F[-1]\oplus I$, Formula (1) gives
$$\nu_{\MM}(F[-1]\oplus \mathcal{O}_Y)=(-1)^{\chi(F)-1}\nu_{\MM}(F[-1])\cdot\nu_{\MM}(\mathcal{O}_{Y})=(-1)^{\chi(F)}\nu_{\MM}(F),$$
where the last equality is from the fact that  $\chi(F[-1],\mathcal{O}_{Y})=(-1)^{\chi(F)-1}$, $\nu_{\MM}(\mathcal{O}_{Y})=1$ since $\mathcal{O}_{Y}$ is a spherical object and $\nu_{\MM}(F[-1])=(-1)\nu_{\MM}(F)$.

Fix a stable pair $I$,  up to isomorphism there is a unique nontrivial triangle $F[-1]\to I\to \mathcal{O}_{Y}\to F$ and hence Formula (2) gives 
$$\nu_{P}(I)=\nu_{\MM}(F[-1]\oplus \mathcal{O}_Y)=(-1)^{\chi(F)}\nu_{\MM}(F).$$
\end{pf}

\begin{rmk}
This result is proved using the property of the moduli space of stable pairs by Bridgeland \cite{Bridgeland}, motivated by the unpublished notes of Pandharipande-Thomas.
\end{rmk}
\section{The Motivic Thom-Sebastiani formula.}\label{motivic}

\subsection*{Resolution of singularities.}

\sss
Let $(L,\mu, \ae)$ be a cyclic $L_\infty$-algebra of dimension $3$.
The cyclicity property on the cohomology $H(L)$ gives a formal potential function
\begin{equation}
f: H^{1}(L)\longrightarrow \mathbb{C}
\end{equation}
defined by 
$$z\mapsto 
f(z)=\sum_{n=2}^\infty\frac{(-1)^{\frac{n(n+1)}{2}}}{(n+1)!}\kappa\big(\nu_n(z,\ldots,z),z\big).
$$

\sss In general $f$ is a formal series and $f(0)=0$, where $0\in H^{1}(L)$ is the origin. 
For simplicity, let $X:=H^{1}(L)$ and let $m=\dim(X)$.
Let 
$$\hat{f}: \XX\longrightarrow \spf(R)$$
be the $t$-adic formal completion of $f$. Then 
$\XX$ is a special formal $R$-scheme in sense of \cite{Ber2} and \cite{Ni}. 

\sss From \cite{Tem} and \cite{Jiang},   let 
\begin{equation}
h:  \YY\longrightarrow \XX
\end{equation}
be the  resolution  of singularities of formal  $R$-scheme $\XX$.

Let $E_i$, $i\in A$, be the set of irreducible components of the exceptional divisors of the resolution. 
For $I\subset A$,  we set 
$$E_{I}:=\bigcap_{i\in I}E_{i}$$
and 
$$E_{I}^{\circ}:=E_{I}\setminus \bigcup_{j\notin I}E_j.$$
Let $m_{i}$ be the multiplicity of the component $E_i$, which means that 
the special fiber of the resolution is 
$$\sum_{i\in A}m_iE_i.$$

\subsection*{Grothendieck group of varieties.}

\sss In this section we briefly review the Grothendieck group of varieties. 
Let $S$ be an algebraic variety over $\cc$. Let $\Var_{S}$ be the category of 
$S$-varieties.

\sss Let $K_0(\Var_{S})$ be the Grothendieck group of $S$-varieties.  By definition $K_0(\Var_{S})$ 
is an abelian group with generators given by all the varieties $[X]$'s, where $X\rightarrow S$ are $S$-varieties,  and the relations are $[X]=[Y]$, if $X$ is isomorphic to $Y$, and 
$[X]=[Y]+[X\setminus Y]$ if $Y$ is a Zariski closed subvariety of $X$.
Let $[X],  [Y]\in K_0(\Var_{S})$,  and define $[X][Y]=[X\times_{S} Y]$.  Then 
we have a product on $K_0(\Var_{S})$. 
Let $\mathbb{L}$ represent the class of $[\mathbb{A}_{\cc}^{1}\times S]$.
Let $\mathcal{M}_{S}=K_0(\Var_{S})[\mathbb{L}^{-1}]$
be the ring by inverting the class $\mathbb{L}$ in the ring $K_0(\Var_{S})$.

If $S$ is a point $\spec (\cc)$, we write $K_0(\Var_{\cc})$ for the Grothendieck ring of $\cc$-varieties.
One can take the map $\Var_{\cc}\longrightarrow K_0(\Var_{\cc})$ to be the universal Euler characteristic.
After inverting the class $\mathbb{L}=[\mathbb{A}_{\cc}^{1}]$, we get the ring $\mathcal{M}_{\cc}$.

\sss We introduce the equivariant Grothendieck group defined in \cite{DeLo1}.
Let $\mu_n$ be the cyclic group of order $n$, which can be taken as the algebraic variety
$Spec (\cc[x]/(x^n-1))$. Let $\mu_{md}\longrightarrow \mu_{n}$ be the map $x\mapsto x^{d}$. Then 
all the groups $\mu_{n}$ form a projective system. Let 
$$\underleftarrow{lim}_{n}\mu_{n}$$
be the direct limit.

Suppose that $X$ is a $S$-variety. The action $\mu_{n}\times X\longrightarrow X$ is called a $good$ 
action if  each orbit is contained in an affine subvariety of $X$.  A good $\hat{\mu}$-action on $X$ is an action of $\hat{\mu}$ which factors through a good $\mu_n$-action for some $n$.

The $equivariant ~Grothendieck~ group$ $K^{\hat{\mu}}_0(\Var_{S})$ is defined as follows:
The generators are $S$-varieties $[X]$ with a good $\hat{\mu}$-action; and the relations are:
$[X,\hat{\mu}]=[Y,\hat{\mu}]$ if $X$ is isomorphic to $Y$ as $\hat{\mu}$-equivariant $S$-varieties,  
and $[X,\hat{\mu}]=[Y,\hat{\mu}]+[X\setminus Y, \hat{\mu}]$ if $Y$ is a Zariski closed subvariety
of $X$ with the $\hat{\mu}$-action induced from that on $X$,  if $V$ is an affine variety with a good 
$\hat{\mu}$-action, then $[X\times V,\hat{\mu}]=[X\times \mathbb{A}_{\cc}^{n},\hat{\mu}]$.  The group 
$K^{\hat{\mu}}_0(\Var_{S})$ has a ring structure if we define the product as the fibre product with the good $\hat{\mu}$-action.  Still we let $\mathbb{L}$  represent the class $[S\times \mathbb{A}_{\cc}^{1},\hat{\mu}]$ and let $\mathcal{M}_{S}^{\hat{\mu}}=K^{\hat{\mu}}_0(\Var_{S})[\mathbb{L}^{-1}]$ be the ring obtained from $K^{\hat{\mu}}_0(\Var_{S})$ by inverting the class $\mathbb{L}$.

If $S=\spec(\cc)$, then we write $K^{\hat{\mu}}_0(\Var_{S})$ as $K^{\hat{\mu}}_0(\Var_{\cc})$, and $\mathcal{M}_{S}^{\hat{\mu}}$ as $\mathcal{M}_{\cc}^{\hat{\mu}}$.  Let  $s\in S$ be a geometric point. Then we have natural maps $K^{\hat{\mu}}_0(\Var_{S})\longrightarrow K^{\hat{\mu}}_0(\Var_{\cc})$ and $\mathcal{M}_{S}^{\hat{\mu}}\longrightarrow \mathcal{M}_{\cc}^{\hat{\mu}}$ given by the correspondence
$[X,\hat{\mu}]\mapsto [X_s,\hat{\mu}]$.

\subsection*{The motivic Milnor fiber.}

\sss Let $(L,\mu,\ae)$ be a cyclic $L_\infty$-algebra of dimension $3$.  Then we have a cyclic $L_\infty$-algebra structure $(H(L),\nu,\kappa)$ on the cohomology $H(L)$.  On $H^{1}(L)$ we have a formal series 
$$f: H^{1}(L)\longrightarrow \mathbb{C}.$$
Recall  that 
$$h: \YY\longrightarrow \XX$$
is the resolution of singularities of the special formal $R$-scheme 
$\XX$.

\sss Let $m_{I}=gcd(m_i)_{i\in I}$. Let $U$ be an affine Zariski open subset of $\YY$, such that, 
on $U$, $f\circ h=uv^{m_{I}}$, with $u$ a unit in $U$ and $v$ a morphism from 
$U$ to $\mathbb{A}_{\cc}^{1}$. The restriction of $E_{I}^{\circ}\cap U$, which we denote by
$\tilde{E}_{I}^{\circ}\cap U$, is defined by
$$\lbrace{(z,y)\in \mathbb{A}_{\cc}^{1}\times (E_{I}^{\circ}\cap U)| z^{m_{I}}=u^{-1}\rbrace}.$$
The $E_{I}^{\circ}$ can be covered by the open subsets $U$ of $Y$.  We can glue together all such 
constructions and get the Galois cover
$$\tilde{E}_{I}^{\circ}\longrightarrow E_{I}^{\circ}$$
with Galois group $\mu_{m_{I}}$.
Remember that $\hat{\mu}=\underleftarrow{lim} \mu_{n}$ is the direct limit of the groups
$\mu_{n}$. Then there is a natural $\hat{\mu}$ action on $\tilde{E}_{I}^{\circ}$.
Thus we get 
$[\tilde{E}_{I}^{\circ}]\in \mathcal{M}_{X_0}^{\hat{\mu}}$. The following definition is given in \cite{Jiang}. 

\begin{defn}
The motivic Milnor fiber of the cyclic $L_\infty$-algebra $L$ is defined as follows:
$$\mathcal{S}_{f,0}(L):=MF_0(L)=\sum_{\emptyset\neq I\subset A}(1-\mathbb{L})^{|I|-1}[\tilde{E}^{\circ}_{I}\cap h^{-1}(0)].$$
\end{defn}
It is clear that $\mathcal{S}_{f,0}(L)\in \mathcal{M}_{\cc}^{\hat{\mu}}.$

\subsection*{The Thom-Sebastiani Formula.}

\sss Let $L_1$ and $L_2$ be two cyclic $L_\infty$-algebras of dimension three. 
From Section \ref{Thom}, there exists a cyclic $L_\infty$-algebra structure on the direct sum
$L_1\oplus L_2=L$. On the cohomology 
$H^1(L)=H^1(L_1)\oplus H^1(L_2)$, the superpotential function 
$$f: H^1(L)\to \cc$$
has a split $f=f_1+f_2$, where $f_i$ is the superpotential function on $H^1(L_i)$ for $i=1,2$.  
The motivic Milnor fiber of $L$ is defined in a similar way.  The Thom-Sebastiani Theorem is stated as follows:

\begin{thm}(\cite{KS2}, \cite{DeLo1}, \cite{Thuong})
$$(1-\sS_{f,(0,0)}(L_1\oplus L_2))=(1-\sS_{f_1,0}(L_1))\cdot (1-\sS_{f_2,0}(L_2)).$$
\end{thm}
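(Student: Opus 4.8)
The plan is to reduce the statement to the classical motivic Thom--Sebastiani theorem. By definition, $\sS_{f,(0,0)}(L_1\oplus L_2)$ is computed from a resolution of singularities $h\colon\YY\to\XX$ of the special formal $R$-scheme $\XX$ attached to the formal potential $f$, through the Denef--Loeser-type sum $\sum_{\emptyset\neq I\subset A}(1-\mathbb{L})^{|I|-1}[\tilde{E}^{\circ}_{I}\cap h^{-1}(0)]$. First I would record that this class lives in $\mathcal{M}_{\cc}^{\hat{\mu}}$ and is independent of the chosen resolution; this rests on Temkin's resolution of singularities for formal schemes \cite{Tem}, together with the motivic volume formalism of \cite{NS} and the comparison worked out in \cite{Jiang}, so that $\sS_{f,0}(L)$ agrees with the standard motivic nearby fiber of the formal function $f$ at the origin.

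The key structural input, established just above, is that on $H^1(L)=H^1(L_1)\oplus H^1(L_2)$ the superpotential splits as $f=f_1+f_2$, with $f_1$ depending only on the coordinates of $H^1(L_1)$ and $f_2$ only on those of $H^1(L_2)$. Thus $f$ is the external (Thom--Sebastiani) sum of $f_1$ and $f_2$ on the orthogonal direct sum of their domains, and $(0,0)$ is the origin of the product. This places us exactly in the hypotheses of the motivic Thom--Sebastiani formula, so the remaining task is to transport that formula into the present formal-scheme language.

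With the identification in hand, I would invoke the motivic Thom--Sebastiani theorem to conclude. In the algebraic case this is Denef--Loeser \cite{DeLo1} (see also Kontsevich--Soibelman \cite{KS2}); the form needed here, valid for formal functions and special formal $R$-schemes rather than only regular functions on smooth varieties, is the one proved by Le \cite{Thuong} via motivic integration. The product on the right-hand side is the convolution product on $\mathcal{M}_{\cc}^{\hat{\mu}}$ built from the join/Fermat construction, under which the normalized vanishing-cycle classes $1-\sS_{f_i,0}(L_i)$ multiply to $1-\sS_{f,(0,0)}(L_1\oplus L_2)$; tracking the $\hat{\mu}$-equivariant monodromy shows this convolution is precisely what the factored formula asserts.

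The main obstacle is the passage from the classical polynomial statement to the special formal scheme setting, since here $f$ is only a formal power series, strictly convergent in the $T$-variables and formal in the $S$-variables, and one cannot quote Denef--Loeser verbatim. The substance of the argument is therefore to verify that motivic Thom--Sebastiani holds at the level of special formal $R$-schemes---either directly through the arc-space and motivic-integration computation of \cite{Thuong}, or by an approximation argument truncating $f$ to polynomials and controlling the effect of the higher-order terms on the resolution data $(E_i,m_i)$. A secondary point requiring care is matching the $\hat{\mu}$-actions on the $\tilde{E}^{\circ}_{I}$ coming from the resolution of $f_1+f_2$ with those coming from the separate resolutions of $f_1$ and $f_2$, since it is exactly this matching that produces the convolution product rather than an ordinary one, and hence justifies writing the identity in the factored form above.
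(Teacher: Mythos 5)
Your proposal matches the paper's own treatment: the paper establishes the splitting $f=f_1+f_2$ on $H^1(L_1)\oplus H^1(L_2)$ and then simply cites the motivic Thom--Sebastiani theorem of Denef--Loeser for regular functions and Le's extension to formal functions, exactly as you do. Your additional remarks on resolution-independence of $\sS_{f,0}$ and on the product on $\mathcal{M}_{\cc}^{\hat{\mu}}$ being the $\hat{\mu}$-equivariant convolution (which the paper's notation $\cdot$ silently presupposes) are careful refinements of the same argument, not a different route.
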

\begin{rmk}
The motivic Thom-Sebastiani Theorem for regular functions is proved in \cite{DeLo1}.
In \cite{Thuong}, Le proves the formal function version of the motivic Thom-Sebastiani Theorem. 
\end{rmk}

\sss \textbf{Conjecture on the Joyce-Song formula on motivic Milnor fibers:}

Let $E$ be a semi-Schur object in the derived category, define 
$$\sS_{0}(E)=\sS_{f,0}(L_{E}),$$
where the cyclic $L_\infty$-algebra $L_E$ is the $L_\infty$-algebra
$\Ext^*(E,E)$.  Let $f: \Ext^1(E,E)\to\cc$ be the formal potential function and 
$$\hat{f}: \XX\to\spf(R)$$
the corresponding special formal $R$-scheme. Recall that 
$$h: \YY\to\XX$$ is the resolution of singularities.  If we have a formal subscheme $\ZZ\subset \XX$, then we define $\sS_{\ZZ}(\hat{f})$ to be the motivic Milnor fiber of 
$\ZZ$: 
$$\sS_{\ZZ}(\hat{f}):=\sum_{\emptyset\neq I\subset A}(1-\ll)^{|I|-1}[\tilde{E}^{\circ}_{I}\cap h^{-1}(\ZZ)].$$

We give the motivic version of Joyce-Song formulas.
\begin{conjecture}\label{Con_Joyce}
\begin{enumerate}
\item $$(1-\mathcal{S}_{((0,0))}(E_1\oplus E_2))=(1-\sS_{0}(E_1))\cdot (1-\sS_{0}(E_2))\,.$$
\item 
\begin{align*}
&\int_{F\in\mathbb{P}(\ext^{1}(E_2,E_1))}(1-\sS_{0}(F))-\int_{F\in\mathbb{P}(\ext^{1}(E_1,E_2))}(1-\sS_{0}(F))\\
&=([\pp^{\dim\ext^{1}(E_2,E_1)}]-[\pp^{\dim\ext^{1}(E_1,E_2)}])\left(1-\sS_{f|_{X_{E_1}\oplus X_{E_2}},0}\right).
\end{align*}
\end{enumerate}
where $\int_{\XX_0}(-): \mM^{\hat{\mu}}_{\XX_0}\to \mM^{\hat{\mu}}_{\cc}$ is the pushforward of motivic vanishing cycles. 
\end{conjecture}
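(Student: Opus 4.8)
\noindent\textbf{Towards Conjecture \ref{Con_Joyce}.} The plan is to transport every step of the proof of Theorem \ref{main:Joyce-Song:formula} from the level of the \'etale Euler characteristic $\chi(-)$ to the level of motivic classes in the equivariant Grothendieck ring $\mathcal{M}_{\cc}^{\hat{\mu}}$. Two structural inputs drive the argument: the motivic Thom--Sebastiani theorem stated above plays the role that the ordinary Thom--Sebastiani formula played for formula (1) of Theorem \ref{main:Joyce-Song:formula}, and the only genuinely new ingredient required is a \emph{motivic blow-up formula}, that is, a lift of Proposition \ref{lemma-key} in which $\chi(\FF_\bullet)$ is replaced by the motivic Milnor fibre $\sS_{\bullet}(\hat{f})\in\mathcal{M}_{\cc}^{\hat{\mu}}$ and the integer $(\dim\XX-\dim\ZZ-1)$ is replaced by a class of projective spaces.

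For formula (1), recall the decomposition $f_E=f_{E_1}+f_{E_2}+(\text{mixed terms in }x,y,z,w)$ used in the proof of formula (1) of Theorem \ref{main:Joyce-Song:formula}. First I would reintroduce the torus $T=\{\mathrm{id}_{E_1}+\gamma\,\mathrm{id}_{E_2}\}$ acting on $\Ext^1(E,E)$, whose fixed locus is $\Ext^1(E_1,E_1)\oplus\Ext^1(E_2,E_2)=X_{E_1}\oplus X_{E_2}$ and on which the mixed terms vanish. The key reduction is the motivic analogue of the identity $\chi(\FF_{f_E}(P))=\chi(\FF_{f_E}(P)^T)$: namely that the $T$-weighted mixed terms do not change the motivic Milnor fibre, so that $\sS_{f_E,0}=\sS_{f_E|_{X_{E_1}\oplus X_{E_2}},0}=\sS_{f_{E_1}+f_{E_2},0}$ in $\mathcal{M}_{\cc}^{\hat{\mu}}$. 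This is a motivic localization statement for the induced $\Gm$-action on the special formal $R$-scheme, and once it is in hand the motivic Thom--Sebastiani theorem applied to $f_{E_1}+f_{E_2}$ yields $(1-\sS_{((0,0))}(E_1\oplus E_2))=(1-\sS_0(E_1))\cdot(1-\sS_0(E_2))$ at once.

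For formula (2), I would first establish the motivic blow-up formula by lifting the geometric stratification of Lemma \ref{lemma:2} to $\mathcal{M}_{\cc}^{\hat{\mu}}$. There the generic fibre of the formal blow-up splits as $\bigl(\FF_y(f|_\ZZ)\times\pp_{\kk,\ber}^{m-l-1}\bigr)\sqcup\bigl(\FF_y(f)\setminus\FF_y(f|_\ZZ)\bigr)$; computing the motivic Milnor fibre of each stratum through the resolution $h:\YY\to\XX$ and combining the equivariant cut-and-paste relation with the trivial projective fibration relation $[X\times\pp^n]=[\pp^n]\cdot[X]$ (with trivial $\hat{\mu}$-action on the fibre) produces the sought lift of Proposition \ref{lemma-key}. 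Applying this formula to the two formal blow-ups appearing in the proof of formula (2) of Theorem \ref{main:Joyce-Song:formula} --- the blow-up of $\widehat{\Ext^1(E,E)|_U}$ along $\widehat V$ and its mirror with the roles of $\Ext^1(E_1,E_2)$ and $\Ext^1(E_2,E_1)$ exchanged --- then integrating over the two projective spaces and subtracting reproduces formula (2), the integer coefficients of the $\chi$-computation being replaced by projective-space classes that after the subtraction combine into $[\pp^{\dim\ext^1(E_2,E_1)}]-[\pp^{\dim\ext^1(E_1,E_2)}]$. The remaining step $\sS_{f|_V,0}=\sS_{f|_{X_{E_1}\oplus X_{E_2}},0}$ again uses the motivic Thom--Sebastiani theorem together with the fact that the extra disc factor contributes the unit class.

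The hard part will be the two localization-type statements: the invariance of the motivic Milnor fibre under adding the $T$-weighted mixed terms (for formula (1)) and the motivic blow-up formula itself (for formula (2)). Both are strictly more delicate than their $\chi$-counterparts because $\mathcal{M}_{\cc}^{\hat{\mu}}$ remembers the $\hat{\mu}$-monodromy, so one must verify that the $\hat{\mu}$-action is carried correctly through the resolution $h$, through the stratification of the blow-up, and through the projective fibration relation; the naive additivity and multiplicativity of $\chi$ are unavailable and must be replaced by the equivariant cut-and-paste and affine/projective bundle relations defining $\mathcal{M}_{\cc}^{\hat{\mu}}$. This is exactly the point at which the Kontsevich--Soibelman conjecture on motivic Milnor fibres, proved by Le \cite{Thuong2} via motivic integration, is expected to provide the comparison that makes the motivic Milnor fibre computable from the analytic Milnor fibre in a cut-and-paste--compatible way; interfacing that input with the formal blow-up computation above is the principal technical obstacle.
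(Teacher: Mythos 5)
The statement you are proving is not proved in the paper at all: it is stated as Conjecture \ref{Con_Joyce}, the paper only explains the meaning of the two integrals (as motivic cycles $\sS_{\ZZ}(\widetilde{\hat f})$ on a formal blow-up) and defers the proof to \cite{Jiang2}, noting that the conjecture is tied to Conjecture 4.2 of \cite{KS2} (the integral identity), proved by Le \cite{Thuong2}. So there is no paper proof to match your proposal against, and the relevant question is whether your plan actually closes the conjecture. It does not, and the failure point is concrete: your key reduction for formula (1), the motivic analogue of $\chi(\FF_{f_E}(P))=\chi(\FF_{f_E}(P)^T)$, namely $\sS_{f_E,0}=\sS_{f_E|_{X_{E_1}\oplus X_{E_2}},0}$, is not a ``motivic localization statement'' that one can expect to hold as stated. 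Torus fixed-point localization is special to the Euler characteristic: it works because $\chi(\Gm)=0$, whereas in $\mM^{\hat\mu}_{\cc}$ the class $[\Gm]=\ll-1$ is nonzero, so the strata with free $T$-action that $\chi$ discards contribute nontrivially to the motivic class. The correct statement accounting for the mixed directions $z\in\Ext^1(E_1,E_2)$, $w\in\Ext^1(E_2,E_1)$ (of $T$-weights $+1$ and $-1$) is exactly the Kontsevich--Soibelman integral identity: it involves integrating the motivic nearby fibre over the weight-one attracting subspace and produces a factor $\ll^{\dim\ext^1(E_1,E_2)}$, rather than a pointwise equality at the origin. That identity is the deep theorem of \cite{Thuong2}, proved by motivic integration; it cannot be obtained by transporting the $\chi$-argument, because the $\chi$-argument's engine (vanishing of Euler characteristics of free-orbit strata) has no motivic counterpart.

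The same criticism applies, less severely, to formula (2): your ``motivic blow-up formula'' lifting Proposition \ref{lemma-key} is plausible in shape (the stratification $\bigl(\FF_y(f|_{\ZZ})\times\pp^{m-l-1}_{\kk,\ber}\bigr)\sqcup\bigl(\FF_y(f)\setminus\FF_y(f|_{\ZZ})\bigr)$ from Lemma \ref{lemma:2} is genuinely geometric and should motivize), but turning a decomposition of Berkovich generic fibres into an identity of classes in $\mM^{\hat\mu}_{\cc}$ requires a motivic vanishing-cycle formalism on special formal $R$-schemes compatible with formal blow-ups and with the $\hat\mu$-action through the resolution $h:\YY\to\XX$ --- none of which is supplied, and the paper defines $\sS_{f,0}(L)$ only via a chosen resolution, so even well-definedness under the blow-up needs an argument. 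To your credit, you flag both of these as ``the hard part''; but since they are precisely the entire mathematical content of the conjecture (everything else in your plan is the bookkeeping already done in the proof of Theorem \ref{main:Joyce-Song:formula}), the proposal is a reasonable roadmap rather than a proof, and its first step as literally written (naive motivic fixed-point localization) is false and must be replaced by the integral identity of \cite{KS2}, \cite{Thuong2}.
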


\sss We give a little  explanation about the conjecture.  For any $E\in\Coh(Y)$, $\sS_{0}(E)$ is the motivic Milnor fiber of $E$, and $(1-\sS_0(E))$ is the analogue of motivic vanishing cycle.  Let $E:=E_1\oplus E_2$.  
Then 
$$\Ext^1(E,E)=\Ext^1(E_1,E_1)\oplus \Ext^1(E_2,E_2)\oplus \Ext^1(E_1,E_2)\oplus \Ext^1(E_2,E_1).$$
Let
$$\phi: \widetilde{\XX}\to \XX$$
be the formal blow-up of $\XX$ along
the completion $\YY\subset \XX$, where 
$\YY=\widehat{Y}$ and $Y:=\Ext^1(E_1,E_1)\oplus \Ext^1(E_1,E_1)\oplus 0\oplus\Ext^1(E_2,E_1)\subset \Ext^1(E,E)$.
We denote by $\ZZ:=\widehat{\Ext^1(E_1, E_2)}\subset \XX$. 
Since the motivic vanishing cycle is constructible,   then
the integration 
$$\int_{F\in\mathbb{P}(\ext^{1}(E_1,E_2))}(1-\sS_{0}(F))$$
can be understood as the motivic cycle
$\sS_{\ZZ}(\widetilde{\hat{f}})$, where 
$$\widetilde{\hat{f}}=\phi\circ \hat{f}: \widetilde{\XX}\to\spf(R)$$
is the composition of $\phi$ and $\hat{f}$. 
The meaning of the  integration
$$\int_{F\in\mathbb{P}(\ext^{1}(E_2,E_1))}(1-\sS_{0}(F))$$
is similar.

\begin{rmk}
These two conjectural formulas  are related  to Conjecture 4.2 in the paper \cite{KS2} by Kontsevich and Soibelman. Note that over field of  characteristic $p$, this conjecture was proved in \cite{KS2}.  Recently this conjecture is proved by Le in \cite{Thuong2}. 

The author strongly believes that Conjecture \ref{Con_Joyce} is true for field of characteristic zero. These two formulas are the crucial fact for  the wall-crossing of  motivic Donaldson-Thomas invariants in \cite{JS},  \cite{KS2}, since it will imply that the homomorphism from motivic Hall algebra to the motivic quantum torus is a Poisson homomorphism. 
In \cite{Jiang2}, we will address these conjectures.  
\end{rmk}


\subsection*{}

\end{document}